\theoremstyle{plain}
\def\beq{\begin{eqnarray}}
\def\eeq{\end{eqnarray}}
\def\beqq{\begin{eqnarray*}}
\def\eeqq{\end{eqnarray*}}
\def\qed{\hfill$\sqcap\kern-8.0pt\hbox{$\sqcup$}$\\}
\def\beq{\begin{eqnarray}}
\def\eeq{\end{eqnarray}}
\def\be{\begin{equation}}
\def\ee{\end{equation}}
\numberwithin{equation}{section}
\newtheorem{theorem}{Theorem}[section]
\newtheorem{claim}[theorem]{Claim}
\newtheorem{corollary}[theorem]{Corollary}
\newtheorem{lemma}[theorem]{Lemma}
\newtheorem{proposition}[theorem]{Proposition}
\theoremstyle{definition}
\theoremstyle{remark}
\newtheorem{remark}[theorem]{Remark}
\begin{document}

\title{Construction of conformally compact Einstein manifolds}

\author{Dezhong Chen
\thanks{Department of Mathematics and Statistics, McMaster University, Hamilton, Ontario, L8S 4K1, Canada.
E-mail address: chend6@math.mcmaster.ca}}

\maketitle

\begin{abstract}
We produce some explicit examples of conformally compact Einstein manifolds, whose conformal compactifications are foliated by Riemannian products of a closed Einstein manifold with the total space of a principal circle bundle over products of K\"{a}hler-Einstein manifolds. We compute the associated conformal invariants, i.e., the renormalized volume in even dimensions and the conformal anomaly in odd dimensions. As a by-product, we obtain some Riemannian products with vanishing $Q$-curvature.
\end{abstract}

\section{Introduction}\label{intr}

The idea of conformally compactifying Einstein manifolds appeared first in the work of Penrose \cite{Pen65}. More than three decades later, physicists rediscovered its role in Maldacena's AdS/CFT correspondence \cite{Mal98, GubKlePol98, Wit98}, which has become a very active area in string theory. In the mathematical community, the study of conformally compact Einstein manifolds was initiated by Fefferman and Graham \cite{FefGra85} in their search for new conformal invariants. Since then, a large amount of work has been devoted to a general theory on the existence of conformally compact Einstein manifolds \cite{GraLee91, Biq00, Lee06, And08}, and their applications in conformal geometry \cite{GraZwo03, FefGra02, ChaQinYan04}.

A conformally compact Einstein manifold is a complete Einstein manifold with negative scalar curvature, which can be conformally deformed and extended to a compact Riemannian manifold with boundary. The simplest example of a conformally compact Einstein manifold is hyperbolic space on which the hyperbolic metric is conformally related to the restriction of the Euclidean metric to the open unit ball, with the closed unit ball as its conformal compactification. The level sets of the radius function on the closed unit ball are round spheres, which shrink smoothly towards the origin. For hyperbolic space of even dimensions, the odd-dimensional foliating spheres may be viewed as the total spaces of Hopf fibrations. In this paper, we will replace the odd-dimensional spheres by the total spaces of appropriate principal circle bundles over products of K\"{a}hler-Einstein manifolds to obtain new conformally compact Einstein manifolds.

\begin{theorem}\label{positive}
Let $N^n$, $n\ge0$, be a closed Einstein manifold with positive scalar curvature. Let $V_1=\mathbb{C}P^{n_1}$, $n_1\ge0$, with normalized Fubini-Study metric $h_1$, and $\{(V_i^{n_i},h_i)\}_{2\le i\le r}$ be Fano K\"{a}hler-Einstein manifolds respectively with first Chern class $p_ia_i$, where $p_i\in\mathbb{Z}_+$ is the Einstein constant of $h_i$, and $a_i\in H^2(V_i;\mathbb{Z})$ is an indivisible class. For $q=(q_1,\cdots,q_r)$, $q_i\in\mathbb{Z}\backslash\{0\}$, let $P_q$ be the principal circle bundle over $V_1\times\cdots\times V_r$ with Euler class $\oplus_{i=1}^rq_ia_i$, where we assume that $|q_1|=1$. Then there exist at least a one-parameter family of non-homothetic conformally compact Einstein metrics on the product of $N$ with the total space of the complex $(n_1+1)$-disc bundle over $V_2\times\cdots\times V_r$ obtained from $P_q$ by blowing down the Hopf fibration $S^{2n_1+1}\rightarrow\mathbb{C}P^{n_1}$.
\end{theorem}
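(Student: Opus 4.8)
The plan is to build the conformally compact Einstein metric by an explicit cohomogeneity-one ansatz, treating the radial coordinate as the defining coordinate near the conformal boundary. First I would write down on $P_q$ the natural connection one-form $\theta$ whose curvature pulls back the Kähler forms $\omega_i$ of the $(V_i,h_i)$, weighted by the $q_i$, so that $d\theta = \sum_i q_i \pi^*\omega_i$. On $N\times P_q$ I would then posit a metric of the form
\begin{equation*}
g = dt^2 + f(t)^2\,\theta^2 + \sum_{i=1}^r g_i(t)^2\,\pi^*h_i + w(t)^2\,g_N,
\end{equation*}
where $g_N$ is the fixed Einstein metric on $N$ and $f, g_1,\dots,g_r, w$ are positive functions of $t$ to be determined; the blow-down of the Hopf fibration $S^{2n_1+1}\to\mathbb{C}P^{n_1}$ corresponds to imposing, at $t=0$, the smooth collapse $f(0)=0$, $f'(0)=1$, $g_1(0)>0$ finite (so the $\mathbb{C}P^{n_1}$ factor together with the circle caps off to a copy of $\mathbb{C}^{n_1+1}/(\text{lattice})$, i.e. the fiber disc), while $g_2(0),\dots,g_r(0),w(0)$ are finite and positive and all functions are even in $t$ — this is precisely the standard Bérard-Bergery/Page–Pope type ansatz for metrics on disc bundles.

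The next step is to feed this ansatz into the Einstein equation $\mathrm{Ric}(g) = -(\dim -1)\,g$ (after normalizing the cosmological constant). Because the fiber geometry is homogeneous on each slice, the Einstein condition reduces to a system of ODEs in $t$ for $(f, g_1,\dots,g_r, w)$, with the Kähler-Einstein constants $p_i$, the bundle weights $q_i$, the Einstein constant of $g_N$, and the dimensions $n, n_i$ entering as structural constants; the $V_1=\mathbb{C}P^{n_1}$ factor with $p_1 = n_1+1$ and $|q_1|=1$ is singled out because it is the one that participates in the smooth collapse. I would look for the substitution that linearizes or otherwise trivializes this system — the classical trick here is to change the independent variable to something like the "volume" variable and to guess that $g_i(t)^2$ are affine functions of a single new coordinate $s$ with $f$ determined algebraically, reducing everything to a single first-order (in fact, after one integration, algebraic) relation, producing a first integral that can be solved in closed form. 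This is how one gets the explicit one-parameter family: the free parameter is (up to the overall homothety that one quotients out by the word "non-homothetic") a constant of integration, e.g. the value $g_1(0)^2$ of the cap radius or equivalently a mass parameter.

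Having produced the local Einstein metric, the remaining work is to verify the two global/boundary conditions that make it a \emph{conformally compact} Einstein manifold on the stated total space. At the inner end $t=0$ one checks smoothness of the closed-up metric across the zero section $N \times \mathbb{C}P^{n_1}\hookrightarrow N\times(\text{disc bundle})$: this is the usual requirement $f(0)=0,\ |f'(0)|=1$, all other functions smooth, positive and even in $t$ at $t=0$, together with the consistency of the circle action — here $|q_1|=1$ is exactly what guarantees the total space of the disc bundle is a manifold and the collapse is smooth (for $|q_1|>1$ one would get an orbifold singularity along the zero section). At the outer end $t\to\infty$ one checks that all the warping functions blow up like $\tfrac12 e^{t}$ (up to lower order), so that with the defining function $x = 2e^{-t}$ the rescaled metric $x^2 g$ extends smoothly to a compact manifold-with-boundary whose boundary is a conformal class on the total space $P_q$ of the principal circle bundle — i.e. the asymptotics are those of a perturbation of the hyperbolic cusp-like end, giving completeness and the negative Einstein constant automatically. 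I expect the main obstacle to be the second step: finding the change of variables that renders the coupled ODE system explicitly integrable while keeping all $g_i(t)^2 > 0$ on $[0,\infty)$ and simultaneously matching the smoothness conditions at $t=0$ — in particular checking that the one free parameter can be chosen in a whole interval so that the solution stays positive and regular for all $t\ge 0$, which is the non-trivial positivity/global-existence part of the argument rather than a routine computation.
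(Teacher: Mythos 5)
Your overall strategy (cohomogeneity-one ansatz over $P_q$, reduction to ODEs, explicit integration, smoothness at the collapse, asymptotically hyperbolic end) is the right family of ideas, and the metric you posit is, up to the change of viewpoint, the one the paper obtains: the paper works instead with a compact manifold-with-boundary $(\bar M,g_{\bar M})$ and a defining function $\rho$ solving the quasi-Einstein-like equation \eqref{ce}, so that $g=\rho^{-2}(g_M+g_N)$ is Einstein by Proposition \ref{eif}; since $\rho^{-2}(g_M+g_N)=\check g_M+\check\rho^2 g_N$, this is equivalent to your warped-product Einstein ansatz with $w=\rho^{-1}$. However, there are two concrete problems with your proposal. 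First, your smoothness conditions at the collapse are wrong for the stated topology: blowing down the Hopf fibration $S^{2n_1+1}\to\mathbb{C}P^{n_1}$ means that at the zero section the \emph{entire} $S^{2n_1+1}$ (the circle fibre together with the $\mathbb{C}P^{n_1}$ factor) shrinks to a point, so one needs both $f\to 0$ and $g_1\to 0$ there, with $|\dot f|=1$ and $\dot g_1^2=\tfrac12$ (for the normalization $\mathrm{Ric}(h_1)=(n_1+1)h_1$); your conditions $f(0)=0$, $g_1(0)>0$ collapse only the circle and produce a complex one-disc bundle over $V_1\times\cdots\times V_r$, not the complex $(n_1+1)$-disc bundle over $V_2\times\cdots\times V_r$. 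Second, your proposed integrating ansatz --- $g_i(t)^2$ affine in a moment-map coordinate $s$ --- fails here: the paper shows (see the Claim in \S\ref{es}) that the system \eqref{fi}--\eqref{th} admits no solutions with linear $\beta_i=g_i^2$; the correct exact solutions have $\beta_i$ \emph{quadratic} in $s$, namely $\beta_i=A_i(s+s_0)^2-q_i^2/(4A_i)$, with $\alpha=f^2$ then given by the integral formula \eqref{alp}.

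Beyond these, the step you flag as "the non-trivial positivity/global-existence part" is precisely where the theorem's content lies, and your proposal does not resolve it. The quadratic ansatz forces the consistency conditions \eqref{csc}, i.e.\ $\tfrac{\epsilon q_i^2}{4A_i^2}+\tfrac{p_i}{A_i}\equiv\nu$ for all $i$, and positivity of all $\beta_i$ on the closed interval up to the collapse reduces to the inequality \eqref{c3}, $A_i^2<q_i^2A_1^2$. The reason Theorem \ref{positive} carries no topological hypothesis (unlike Theorems \ref{zero} and \ref{np}, which need $(n_1+1)|q_i|>p_i$) is that for $\epsilon>0$ the quadratic \eqref{qu} has \emph{two} negative roots for each $i$, and one can always select roots and a range of $\nu$ making \eqref{c3} hold --- this case analysis (Proposition \ref{fes}) is the heart of the existence argument and also exhibits the one-parameter family via the free parameter $\nu$. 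A complete proof must supply this analysis rather than defer it.
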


\begin{remark}
When $N$ is only a point, the Einstein condition should be understood as the condition that the constant $\epsilon$ in Eq. \eqref{ce} is positive, zero or negative (see Proposition \ref{eif} below for clarity).
\end{remark}

\begin{remark}
When $n_1=0$, $V_1=\mathbb{C}P^0$ is a point, and $p_1=1$. In this case, we just collapse the fibre circle of $P_q$ to obtain a complex one-disc bundle over $V_2\times\cdots\times V_r$.
\end{remark}

\begin{theorem}\label{zero}
Let $N^n$, $n\ge0$, be a closed Einstein manifold with zero scalar curvature. Let $V_1=\mathbb{C}P^{n_1}$, $n_1\ge0$, with normalized Fubini-Study metric $h_1$, and $\{(V_i^{n_i},h_i)\}_{2\le i\le r}$ be Fano K\"{a}hler-Einstein manifolds respectively with first Chern class $p_ia_i$, where $p_i\in\mathbb{Z}_+$ is the Einstein constant of $h_i$, and $a_i\in H^2(V_i;\mathbb{Z})$ is an indivisible class. For $q=(q_1,\cdots,q_r)$, $q_i\in\mathbb{Z}\backslash\{0\}$, let $P_q$ be the principal circle bundle over $V_1\times\cdots\times V_r$ with Euler class $\oplus_{i=1}^rq_ia_i$, where we assume that $|q_1|=1$. If in addition $(n_1+1)|q_i|>p_i$, $2\le i\le r$, then there exist a one-parameter family of non-homothetic conformally compact Einstein metrics on the product of $N$ with the total space of the complex $(n_1+1)$-disc bundle over $V_2\times\cdots\times V_r$ obtained from $P_q$ by blowing down the Hopf fibration $S^{2n_1+1}\rightarrow\mathbb{C}P^{n_1}$.
\end{theorem}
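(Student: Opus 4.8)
The plan is to reduce the problem to a first-order ODE system by writing an explicit cohomogeneity-one ansatz and then to verify the completeness, conformal-compactness, and Einstein conditions by a careful analysis of that system near the two ends. Concretely, on the total space of $P_q$ one has the connection $1$-form $\theta$ with $d\theta = \sum_i q_i \omega_i$, where $\omega_i$ is the K\"ahler form of $(V_i,h_i)$, and I would look for a metric of the form
\begin{equation}
g = g_N + dt^2 + f(t)^2\,\theta^2 + \sum_{i=1}^r g_i(t)^2\, \pi^*h_i,
\end{equation}
on $N \times (\text{cone over } P_q)$, with $N$ carrying its fixed zero-scalar-curvature Einstein metric $g_N$ (scaled appropriately, or with warping absorbed into the overall conformal factor as in Eq.~\eqref{ce} and Proposition~\ref{eif}). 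Imposing $\mathrm{Ric}(g) = -(\dim g - 1)\,g$ turns the Einstein equations into an autonomous ODE system for $f, g_1,\dots,g_r$. The special normalization $|q_1|=1$ together with $V_1=\mathbb{C}P^{n_1}$ is what allows the orbit $P_q$ to be smoothly collapsed at one end: near $t=0$ the subbundle coming from the Hopf circle over $\mathbb{C}P^{n_1}$ closes up, so that $f^2\theta^2 + g_1^2\,\pi^*h_1$ limits onto the smooth metric on the total space of the $\mathcal{O}(-1)^{\oplus(n_1+1)}$-type disc bundle (i.e.\ the blow-down of $S^{2n_1+1}\to\mathbb{C}P^{n_1}$), exactly as the round $S^{2n_1+1}$ shrinks to a point in flat $\mathbb{R}^{2n_1+2}$.

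The key steps, in order, are: (i) derive the ODE system and identify the correct boundary behavior at the collapsing end $t=0$ — $f(0)=0$, $f'(0)=1$, $g_i(0)>0$, $g_i'(0)=0$ for $i\ge 2$, and $g_1$ tied to $f$ so that the fibre-plus-base combination is smooth; (ii) show existence of a solution with this initial data (a smoothness/regularity argument at the axis, of the type standard for cohomogeneity-one Einstein metrics), giving locally a one-parameter family parametrized by the sizes $g_i(0)$ subject to the K\"ahler-Einstein constraints $\mathrm{Ric}(h_i)=p_i h_i$; (iii) prove that the maximal solution exists for all $t\in(0,\infty)$ and that as $t\to\infty$ all of $f,g_1,\dots,g_r$ grow like $\tfrac{1}{2}e^{t}$ (up to constants), so that after the conformal change $\bar g = (\text{defining function})^2 g$ the metric extends smoothly across a conformal boundary which is itself a Riemannian product of $N$ with a $P_q$-type total space — this is where the hypothesis $(n_1+1)|q_i|>p_i$ enters, ensuring the effective "potential" in the ODE keeps the $g_i$ from degenerating and forces the asymptotically hyperbolic (rather than, say, asymptotically cusp-like) behavior; (iv) check that distinct parameter values give non-homothetic metrics, by comparing, e.g., the induced boundary conformal classes or curvature invariants.

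The main obstacle I expect is step (iii): the long-time existence and the precise asymptotics. One must rule out finite-time blow-up or collapse of any $g_i$ and pin down the leading exponential asymptotics sharply enough to conclude $C^\infty$ (or at least $C^{n,\alpha}$, sufficient for conformal compactness) extendability of the compactified metric across the boundary. This is essentially a global phase-portrait analysis of the autonomous system; the zero-scalar-curvature case is more delicate than the positive case of Theorem~\ref{positive} precisely because $N$ contributes no restoring term, so the inequality $(n_1+1)|q_i|>p_i$ must be used to control the trajectory — I would set up a Lyapunov-type monotone quantity along the flow (for instance a suitably weighted combination of $f'/f$, $g_i'/g_i$ and the reciprocals $1/g_i^2$) whose monotonicity both prevents degeneration and yields the asymptotic rates. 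The remaining ingredients — the smoothness at the collapsing orbit in step (ii) and the non-homothety in step (iv) — are comparatively routine given the explicit form of the ansatz and the computation of conformal invariants that the paper carries out afterward.
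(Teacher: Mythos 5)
Your overall framework---a cohomogeneity-one ansatz on $I\times P_q$, collapse of the Hopf circle over $\mathbb{C}P^{n_1}$ at one end, conformal infinity at the other---is the right one, but the proposal has a genuine gap at exactly the step you flag as the main obstacle. You propose to establish global existence and the exponential asymptotics of the trajectory by a phase-portrait/Lyapunov analysis of an autonomous ODE system, but you neither exhibit the monotone quantity nor explain why the trajectory launched with your initial data at the collapsing orbit survives to $t=\infty$ with the correct asymptotically hyperbolic rates; without that, nothing is proved. The paper avoids this difficulty entirely: instead of imposing $\mathrm{Ric}(g)=-(p-1)g$ directly, it solves the quasi-Einstein-like equation \eqref{ce} for the pair $(g_M,\rho)$ on a compact manifold-with-boundary, passes to the moment-map coordinate $s$ with $ds=f\,dt$, imposes the ansatz \eqref{set}, and obtains completely explicit solutions---$\varphi=(p-2)\log(\kappa_1 s)$, quadratic $\beta_i$ as in \eqref{cbe}, and $\alpha$ a rational function \eqref{alpq}. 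All the qualitative questions (positivity of $\alpha$ on $[0,s_\ast)$, smooth closing up at $s_\ast$ with $\alpha'(s_\ast)=-2$ and $\beta_1'(s_\ast)=-1$, completeness from the divergence of $\int_0 ds/(\rho\sqrt{\alpha})$) are then read off from closed-form expressions; no dynamical-systems argument is needed, and it is not clear one could be carried out as sketched.

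Two further points. First, your displayed ansatz carries $g_N$ unwarped; since $N$ here is Ricci-flat, $\mathrm{Ric}(g)$ would vanish on the $N$-directions and could not equal $-(p-1)g$ there. The $N$ factor must be warped by $\rho^{-2}$ (equivalently, one works with $g=\rho^{-2}(g_M+g_N)$ as in Proposition \ref{eif}), and this warping is also what makes $N$ survive into the conformal infinity \eqref{conf}. Second, the hypothesis $(n_1+1)|q_i|>p_i$ does not control the asymptotic end: in the paper it is precisely the condition \eqref{c3} that the explicit quadratics $\beta_i$, $i\ge2$, stay positive at the event horizon $s_\ast=-\frac{1}{2A_1}$, given that $\epsilon=0$ forces $A_i=p_i/\nu$. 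Attributing it to ruling out cusp-like behavior at infinity mislocates where it is used.
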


In the above two theorems, we require that all of the base factors $(V_i,h_i)$ have positive Einstein constants. However, in case some of $(V_i,h_i)$, $2\le i\le r$, have non-positive Einstein constants, we can still construct conformally compact Einstein metrics in the same manner provided $N$ has negative scalar curvature.

\begin{theorem}\label{np}
Let $N^n$, $n\ge0$, be a closed Einstein manifold with negative scalar curvature. Let $V_1=\mathbb{C}P^{n_1}$, $n_1\ge0$, with normalized Fubini-Study metric $h_1$, and $\{(V_i^{n_i},h_i)\}_{2\le i\le r}$ be closed K\"{a}hler-Einstein manifolds respectively with first Chern class $p_ia_i$, where $p_i\in\mathbb{Z}$ is the Einstein constant of $h_i$, and $a_i\in H^2(V_i;\mathbb{Z})$ is an indivisible class. In the case of $p_i=0$, we assume the K\"{a}hler class of $h_i$ is $2\pi a_i$, i.e., $V_i$ is a Hodge manifold. For $q=(q_1,\cdots,q_r)$, $q_i\in\mathbb{Z}\backslash\{0\}$, let $P_q$ be the principal circle bundle over $V_1\times\cdots\times V_r$ with Euler class $\oplus_{i=1}^rq_ia_i$, where we assume that $|q_1|=1$. If in addition $(n_1+1)|q_i|>p_i$, $2\le i\le r$, then there exist a one-parameter family of non-homothetic conformally compact Einstein metrics on the product of $N$ with the total space of the complex $(n_1+1)$-disc bundle over $V_2\times\cdots\times V_r$ obtained from $P_q$ by blowing down the Hopf fibration $S^{2n_1+1}\rightarrow\mathbb{C}P^{n_1}$.
\end{theorem}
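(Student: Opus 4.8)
The plan is to reduce all three theorems to a single ODE analysis on a cohomogeneity-one metric, of which Theorem \ref{np} is the case where the ``transverse'' constant $\epsilon$ from \eqref{ce} is negative. First I would write down the candidate metric in geodesic-polar form: on the complement of the zero section, the conformal compactification should look like
\begin{equation*}
\bar g \;=\; dt^2 \;+\; f(t)^2\,\theta\otimes\theta \;+\; \sum_{i=1}^r g_i(t)^2\, h_i \;+\; \phi(t)^2\, g_N,
\end{equation*}
where $\theta$ is a connection one-form on $P_q$ whose curvature is $\sum q_i\,\omega_{h_i}$ (up to normalization), $g_N$ is the fixed Einstein metric on $N$, and $t$ ranges over $[0,\infty)$ after an appropriate conformal change $g = \rho^{-2}\bar g$ with $\rho\to 0$ at the conformal infinity $t\to\infty$. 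Imposing the Einstein equation $\mathrm{Ric}(g) = -(n+2+2\sum n_i +1)\,g$ turns into a system of ODEs in $f, g_i, \phi$; the K\"{a}hler--Einstein hypothesis on each $(V_i,h_i)$ is exactly what makes this consistent (the $h_i$-directions stay proportional to $h_i$). I expect the paper to have already packaged the relevant first integral and the constant $\epsilon$ in a preliminary Proposition \ref{eif}, so I would invoke that to collapse the system to one scalar ODE together with the constraint $\epsilon<0$ coming from $N$ having negative scalar curvature.

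Second, the smoothness of the metric where the Hopf fibration is blown down: as $t\to 0$, the circle fibre of $P_q$ together with the $\mathbb{C}P^{n_1}$ base collapse to a point at the center of the $(n_1+1)$-disc bundle, so I need $f(t)\sim t$, $g_1(t)\sim c\,t$ with the right constant, and $g_i(0)>0$, $\phi(0)>0$ for $i\ge 2$, with all functions smooth and even in $t$ to the required order. Checking that the disc-bundle total space is exactly the blow-down of $P_q$ along $S^{2n_1+1}\to\mathbb{C}P^{n_1}$ is a topological bookkeeping step: the Euler class $\bigoplus q_i a_i$ with $|q_1|=1$ ensures that restricting the bundle to the $V_1$-factor gives the Hopf bundle (hence the blow-down is a genuine smooth manifold, the complex $(n_1+1)$-disc bundle over $V_2\times\cdots\times V_r$), while the conditions $(n_1+1)|q_i|>p_i$ for $i\ge 2$ are precisely the inequalities guaranteeing that the ODE solution has $g_i(0)$ strictly positive and that the relevant Riccati-type quantity does not blow up before $t=0$ — this is where the hypothesis on the $q_i$ enters, and it is the analogue of the analogous positivity constraints used for Bérard-Bergery/Page-type Einstein metrics.

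Third, the behavior at conformal infinity: I would show the ODE admits a solution existing on all of $[0,\infty)$ with $f, g_i, \phi$ all growing like $\tfrac12 e^{t}$ (up to constants) so that the physical metric $g$ is complete with the correct asymptotics, and that the conformal factor $\rho = e^{-t}(1+O(e^{-2t}))$ extends smoothly to the boundary, giving a conformally compact Einstein manifold whose conformal infinity is the Riemannian product of $N$ with $P_q$ (with its natural metric). The one-parameter family with non-homothetic members should come from a free parameter in the initial data at $t=0$ — e.g. the ratio of $g_i(0)$ to $\phi(0)$, or the value $\phi(0)$ itself — surviving the smoothness constraints; I would verify non-homotheticity by exhibiting a metric invariant (a curvature ratio, or the conformal infinity's relative scaling) that varies with the parameter.

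The main obstacle I anticipate is the global existence and asymptotic control of the ODE solution: I must show that the solution launched from the smooth (blown-down) initial data at $t=0$ does not develop a singularity at finite $t$ and instead runs off to the exponentially-growing asymptotically-hyperbolic regime. When $N$ has \emph{negative} scalar curvature (so $\epsilon<0$), the $\phi$-equation has a favorable sign that should prevent $\phi$ from vanishing, which is exactly why Theorem \ref{np} does not need the Fano hypothesis on the $V_i$ that Theorems \ref{positive} and \ref{zero} require — so I would set up a monotonicity or barrier argument exploiting $\epsilon<0$ to trap the solution in the correct region for all $t\ge0$, and then extract the asymptotics by linearizing at the fixed point of the rescaled flow.
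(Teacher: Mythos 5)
Your overall scaffolding (cohomogeneity-one ansatz on $I\times P_q$, smooth collapse of the Hopf fibre at one end, conformal infinity at the other, free parameter surviving homothety) matches the paper's, but the proposal has a genuine gap at exactly the point you flag as the ``main obstacle'': you defer the existence of a global solution of the ODE system to an unspecified monotonicity/barrier argument and a linearization at a fixed point, and no such argument is given or needed in the paper. The paper's proof is entirely explicit: the conformal factor is taken to depend only on the $M$-variable so that Proposition \ref{eif} reduces the Einstein condition to the quasi-Einstein-like equation \eqref{ce} on $M$ alone; passing to the moment-map coordinate $s$ and imposing the separating ansatz \eqref{set} forces $\varphi''+\varphi'^2/(p-2)=0$, whence $\rho=\kappa_1(s+\kappa_0)$, each $\beta_i=g_i^2$ is an explicit quadratic \eqref{cbe}, and $\alpha=f^2$ solves a first-order \emph{linear} ODE with the closed-form rational solution \eqref{cal}. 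Positivity of $\alpha$ on $[0,s_\ast)$, the boundary conditions $\alpha(s_\ast)=0$, $\alpha'(s_\ast)=-2$, and completeness are then direct computations. Without this integrability ansatz (or some substitute), your plan reduces to proving global existence and asymptotically hyperbolic behavior for a coupled nonlinear system launched from a conical singularity, which is the hard content of the theorem and is not supplied.

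Two further points. First, your ansatz puts a warping function $\phi(t)^2g_N$ into the compactified metric $\bar g$ and then proposes to invoke Proposition \ref{eif}; but that proposition applies to $g=\rho^{-2}(g_M+g_N)$ with $g_N$ unwarped and $\rho$ a function on $M$ only, so you must either set $\phi\equiv1$ in $\bar g$ (absorbing all warping into $\rho$) or abandon the reduction to \eqref{ce} — as written the two halves of your plan are incompatible. Second, the mechanism by which $\epsilon<0$ removes the Fano hypothesis is not a favorable sign in a $\phi$-equation: it is the algebraic consistency condition \eqref{csc}, namely $\epsilon q_i^2/(4A_i^2)+p_i/A_i=\nu$, which for $p_i\le0$ forces $\epsilon<0$ and then always admits a unique negative root $A_i$ satisfying \eqref{c3} with no further topological restriction (this is Proposition \ref{fesnp}); the inequality $(n_1+1)|q_i|>p_i$ enters, as you correctly guessed, through the requirement $\beta_i(s_\ast)>0$ at the blow-down locus.
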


In Theorem \ref{np} the additional condition $(n_1+1)|q_i|>p_i$ always holds true for $p_i\le0$ as $q_i\ne0$. We therefore have the following immediate consequence of Theorem \ref{np}.
\begin{corollary}
Let $N^n$, $n\ge0$, be a closed Einstein manifold with negative scalar curvature. Let $V_1=\mathbb{C}P^{n_1}$, $n_1\ge0$, with normalized Fubini-Study metric $h_1$, and $\{(V_i^{n_i},h_i)\}_{2\le i\le r}$ be closed non-positive K\"{a}hler-Einstein manifolds respectively with first Chern class $p_ia_i$, where $p_i\in\mathbb{Z}_-\cup\{0\}$ is the Einstein constant of $h_i$, and $a_i\in H^2(V_i;\mathbb{Z})$ is an indivisible class. In the case of $p_i=0$, we assume the K\"{a}hler class of $h_i$ is $2\pi a_i$. For $q=(q_1,\cdots,q_r)$, $q_i\in\mathbb{Z}\backslash\{0\}$, let $P_q$ be the principal circle bundle over $V_1\times\cdots\times V_r$ with Euler class $\oplus_{i=1}^rq_ia_i$, where we assume that $|q_1|=1$. Then there exist a one-parameter family of non-homothetic conformally compact Einstein metrics on the product of $N$ with the total space of the complex $(n_1+1)$-disc bundle over $V_2\times\cdots\times V_r$ obtained from $P_q$ by blowing down the Hopf fibration $S^{2n_1+1}\rightarrow\mathbb{C}P^{n_1}$.
\end{corollary}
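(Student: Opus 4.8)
The plan is to deduce this statement directly from Theorem~\ref{np}: no new analysis is needed, and the proof amounts to checking that the hypotheses of Theorem~\ref{np} are all in force. The assumptions here on $N$ (closed Einstein with negative scalar curvature), on $V_1=\mathbb{C}P^{n_1}$ with its normalized Fubini--Study metric, on the remaining closed K\"ahler--Einstein factors $(V_i,h_i)$ with first Chern class $p_ia_i$ and $a_i$ indivisible (together with the normalization that the K\"ahler class be $2\pi a_i$ when $p_i=0$), on the circle bundle $P_q$ with Euler class $\oplus_{i=1}^r q_ia_i$, and on the normalization $|q_1|=1$, are literally those of Theorem~\ref{np}. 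Hence the only point to verify is the extra numerical hypothesis $(n_1+1)|q_i|>p_i$, $2\le i\le r$.

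I would dispatch this in one line: since $q_i\in\mathbb{Z}\setminus\{0\}$ we have $|q_i|\ge1$, so $(n_1+1)|q_i|\ge n_1+1\ge1>0\ge p_i$ for each $i$ with $2\le i\le r$, using $p_i\in\mathbb{Z}_-\cup\{0\}$. With this inequality confirmed, Theorem~\ref{np} applies verbatim and produces the desired one-parameter family of non-homothetic conformally compact Einstein metrics on the product of $N$ with the total space of the complex $(n_1+1)$-disc bundle over $V_2\times\cdots\times V_r$ obtained from $P_q$ by blowing down the Hopf fibration $S^{2n_1+1}\to\mathbb{C}P^{n_1}$.

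In this sense the corollary has no independent obstacle: every difficulty --- setting up and integrating the cohomogeneity-one Einstein system, ensuring the metric closes up smoothly along the blown-down locus, checking geodesic completeness of the interior, and exhibiting the conformal compactification with the prescribed product structure at conformal infinity --- is already resolved inside the proof of Theorem~\ref{np}. The only thing the corollary records is that for non-positive base factors the auxiliary condition $(n_1+1)|q_i|>p_i$ is automatic, so it need not be imposed as a separate assumption.
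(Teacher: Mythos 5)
Your proposal is correct and coincides with the paper's own argument: the author notes immediately before the corollary that the extra condition $(n_1+1)|q_i|>p_i$ holds automatically when $p_i\le 0$ and $q_i\ne 0$, and then cites Theorem~\ref{np}. Your one-line verification $(n_1+1)|q_i|\ge n_1+1\ge 1>0\ge p_i$ is exactly the intended check.
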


\begin{remark}
When $n=0$ and $N$ is a point, a couple of cases of Theorems \ref{positive}, \ref{zero} and \ref{np} have been contained implicitly in earlier work. For example, the case $n_1>0$, $n_2=0$ and $r=2$, where the underlying manifold is the Euclidean ball $B^{2n_1+2}$, was obtained in \cite{Ber82,Ped86}, as was the case $n_1=0$, $n_2>0$ and $r=2$ (cf. also \cite{Cal79}). A generalization of the latter case was given by \cite[Theorem 1.6(d)]{WanWan98}.
\end{remark}

\begin{remark}
The lowest-dimensional examples which Theorems \ref{positive}, \ref{zero} and \ref{np} yield are $B^4$ and some nontrivial complex one-disc bundles over Riemann surfaces.
\end{remark}

From our point of view, a conformally compact Einstein manifold and all of its conformal compactifications are two sides of one object, connected by the collection of defining functions. If we set as a goal the existence of a conformally compact Einstein metric on a given manifold, then it seems more flexible to seek one of its conformal compactifications and the corresponding defining function because they are a priori more plentiful. Our strategy for proving Theorems \ref{positive}, \ref{zero} and \ref{np} is thus to construct a compact Riemannian manifold with boundary, which serves as a conformal compactification, and a suitable defining function on it. This leads us to consider a special type of conformal deformation as follows (compare to \cite[Theorem 1(I)]{Cle08}).

Let $(M^m,g_M)$ and $(N^n,g_N)$, $p=m+n>3$, be two connected Riemannian manifolds. In particular, assume $(N,g_N)$ is a closed Einstein manifold with Einstein constant $\epsilon$. Let $\rho:M\rightarrow\mathbb{R}_+$ be a positive smooth function. Then the conformal metric $g=\rho^{-2}(g_M+g_N)$ is an Einstein metric on the product $M\times N$ if $\rho$ satisfies a quasi-Einstein-like equation (see \eqref{ce} below), which is also necessary for $g$ being an Einstein metric when $n>0$. In case $M$ is the total space of a disc bundle as in Theorems \ref{positive}, \ref{zero} and \ref{np}, with both $g_M$ and $\rho$ on the hypersurfaces $P_q$ depending on the radius parameter, Eq. \eqref{ce} reduces to a system of ordinary differential equations, which happens to admit nontrivial exact solutions. Then a compact Riemannian manifold-with-boundary $(\bar{M}\times N,g_{\bar{M}}+g_N)$ and a negative Einstein manifold $(M\times N,g)$ can be built out of these data if we choose the involved parameters carefully. It remains to check that $(M\times N,g)$ is indeed complete.
\begin{remark}
When $n>0$, Eq. \eqref{ce} is intimately connected to the so-called quasi-Einstein equation (see \eqref{qee} below), which arises from constructions of warped product Einstein manifolds \cite[Corollary 9.107]{Bes87}. It has been shown by Kim and Kim \cite[Theorem 1]{KimKim03} that a non-positive warped product Einstein manifold with closed base must be a Riemannian product. In the language of warped product structures, our negative Einstein manifold $(M\times N, g)$ has the complete noncompact manifold $(M,\rho^{-2}g_M)$ as base manifold. Therefore Kim and Kim's theorem does not apply to the present situation.
\end{remark}

We turn now to consider the conformal invariants associated to a conformally compact Einstein manifold $(X^p,g)$. Since $(X,g)$ always has infinite volume, it is therefore natural to investigate the asymptotic behavior of the volume function. It is well-known (cf. \cite{Gra00}) that certain conformal invariants associated to $(X,g)$ appear in the asymptotic expansion of the volume function (see \eqref{pe} and \eqref{po} below). Basically, there are two cases distinguished by the parity of the dimension $p$.

When $p$ is even, the constant term in the asymptotic expansion \eqref{pe}, called the renormalized volume, is an invariant of $(X,g)$ itself. The differential one-form of the renormalized volume is proportional to the Brown-York quasi-local stress-energy tensor of the usual Einstein-Hilbert action \cite[(1.11)]{And05}, and via the AdS/CFT correspondence, it corresponds to the expectation value of the stress-energy tensor of the dual CFT on the conformal infinity $(\partial X, [\rho^2g|_{\partial X}])$, where $\rho$ is a defining function for boundary $\partial X$ (see \S\ref{pre} for definition). From a mathematical point of view, the importance of the renormalized volume stems from its role in the Gauss-Bonnet-Chern formula (see \eqref{gbc} below). This fact has brought about some progress in understanding the topology of four-manifolds (cf. \cite{ChaQinYan04}). We will derive an explicit formula for the renormalized volumes of the even-dimensional conformally compact Einstein manifolds constructed in Theorems \ref{positive}, \ref{zero} and \ref{np} (cf. Theorem \ref{rvv}).

When $p$ is odd, the coefficient of the logarithmic term in the asymptotic expansion \eqref{po}, called the conformal anomaly, is a conformal invariant of $(\partial X, [\rho^2g|_{\partial X}])$, which agrees with the integral of Branson's $Q$-curvature \cite{GraZwo03, FefGra02} up to a dimensional constant. Furthermore, the $Q$-curvature can be read off from the asymptotic behavior of some particular solution $\tilde{U}$ of the Poisson equation $\triangle_g\tilde{U}=p-1$ on $(X,g)$ (cf. \cite[Theorem 4.1]{FefGra02}), where $\triangle_g$ is the positive Laplace operator. It is not hard to see that all of the odd-dimensional conformally compact Einstein manifolds constructed in Theorems \ref{positive}, \ref{zero} and \ref{np} have zero conformal anomaly. A little more work following the original proof of \cite[Theorem 3.1]{FefGra02} shows that
\begin{theorem}\label{vca}
There exists a representative metric with zero $Q$-curvature in the conformal infinity of each odd-dimensional conformally compact Einstein manifold constructed in Theorems \ref{positive}, \ref{zero} and \ref{np}.
\end{theorem}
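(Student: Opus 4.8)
\medskip
\noindent\textbf{Proof strategy for Theorem \ref{vca}.}
The plan is to adapt the argument of \cite[Theorem 3.1]{FefGra02} to the cohomogeneity-one geometry at hand. Fix one of the odd-dimensional examples $(X^p,g)$ constructed above, so that $g=\rho^{-2}(g_M+g_N)$ with $g_M=dt^{2}+k_t$ (the $k_t$ being metrics on $P_q$) and $\rho=\rho(t)$ depending only on the radial parameter $t$ of the disc bundle; write $n=p-1$ for the even dimension of the conformal infinity $(\partial X,[\rho^{2}g|_{\partial X}])$, where $\partial X=P_q\times N$. Recall from \cite[Theorem 4.1]{FefGra02} that, having fixed a representative metric $h$ of the conformal infinity together with its associated geodesic defining function $r$ (so $g=r^{-2}(dr^{2}+h_r)$ near $\partial X$ with $h_0=h$), the $Q$-curvature $Q_h$ is a fixed nonzero dimensional multiple of the coefficient $\mathcal{G}$ of $r^{n}\log r$ in the asymptotic expansion at $r=0$ of the distinguished solution $\tilde U$ of $\triangle_g\tilde U=p-1$. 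I will exhibit a representative $h$ for which $\mathcal{G}$ vanishes identically.

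First I would take $h$ to be a product metric, namely a constant multiple of the product metric that the defining function $\rho$ induces on $\partial X=P_q\times N$ (rescaling $h$ by a constant rescales $Q_h$ by a constant and does not affect whether it vanishes). Because $g$ is cohomogeneity one along $t$, its geodesic defining function depends on $t$ only: the normalization $|dr|^{2}_{r^{2}g}\equiv1$ reduces to the separable equation $r'/r=\pm\,\rho^{-1}$, and one checks from the explicit $g_M$ and $\rho$ produced in the proofs of Theorems \ref{positive}, \ref{zero} and \ref{np} that the resulting $r=r(t)$ has the correct asymptotics at $\partial X$ and that $r^{2}g=dr^{2}+h_r$ with $h_0=h$. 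Moreover $\triangle_g$ maps functions of $t$ to functions of $t$ (the data $|dt|^{2}_g=\rho^{2}$ and $\triangle_g t$ are functions of $t$ alone), so on radial functions the Poisson equation $\triangle_g\tilde U=p-1$ becomes a single second-order linear ODE in $t$. Solving it by two quadratures against the explicit elementary functions $g_M,\rho$ produces a radial function with the prescribed logarithmic leading term; since the formal boundary expansion of any solution with that leading term is determined through the order at which the $r^{n}\log r$ coefficient appears, this radial solution computes $Q_h$.

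The crucial observation is then that the radial solution $\tilde U$, re-expressed in $r$, is a function of the single variable $r$, so every coefficient of its expansion $\tilde U\sim\log r+\sum_j a_j r^{j}+\mathcal{G}\,r^{n}\log r+\cdots$ is a constant rather than a nontrivial function on $\partial X$; in particular $\mathcal{G}$, and hence $Q_h$, is constant on $\partial X$. One concludes in either of two ways. Directly, one carries the quadratures through and reads off that the $r^{n}\log r$ term is absent, i.e.\ $\mathcal{G}=0$. More economically, $\int_{\partial X}Q_h\,dv_h$ is a conformal invariant equal to a fixed nonzero multiple of the conformal anomaly of $(X,g)$ \cite{GraZwo03,FefGra02}, which vanishes for all of the odd-dimensional examples above (as remarked just before the statement); a constant function with zero integral over the compact manifold $\partial X$ must vanish, so $Q_h\equiv0$. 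Either way, $h$ is the asserted representative with zero $Q$-curvature.

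The main obstacle is the bookkeeping in the middle step: one must verify, using the closed-form solutions of the ODE system underlying Theorems \ref{positive}, \ref{zero} and \ref{np}, that the geodesic defining function $r(t)$ indeed puts $g$ in the normal form $r^{-2}(dr^{2}+h_r)$ with the expected boundary behaviour, and that the radial $\tilde U(t)$ admits the required asymptotic expansion, so that the hypotheses of the $Q$-curvature formula of \cite{FefGra02} genuinely apply. Conceptually nothing goes beyond the cohomogeneity-one reduction; once that is in place, the vanishing of $Q_h$ follows at once from the already-established vanishing of the conformal anomaly (or, alternatively, from the explicit quadratures).
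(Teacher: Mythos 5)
Your proposal is correct and follows essentially the same route as the paper: choose the boundary metric induced by the (cohomogeneity-one) geodesic defining function, note that $\triangle_g$ preserves radial functions so the Fefferman--Graham solution $\tilde U$ of $\triangle_g\tilde U=p-1$ can be taken radial, conclude that the coefficient of the $\sigma^{p-1}\log\sigma$ term (hence the $Q$-curvature of that representative) is constant, and then kill the constant using the already-established vanishing of the conformal anomaly. The paper implements the "bookkeeping" you flag by explicitly constructing the geodesic defining function (Proposition \ref{ggdf}) and the terms $A$, $B$ as functions of $\sigma$ alone, exactly as you anticipate.
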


Theorem \ref{vca} together with Theorems \ref{positive}, \ref{zero} and \ref{np} gives rise to a large class of Riemannian products with vanishing $Q$-curvature. For example, if we take $n_1>0$, $n_2=0$ and $r=2$ in Theorems \ref{positive}, \ref{zero} and \ref{np}, then Theorem \ref{vca} yields
\begin{corollary}\label{rpzq}
Let $(N^n,g_N)$ be an odd-dimensional closed Einstein manifold. For each $k>0$, there exist a one-parameter family of squashed spheres $S^{2k+1}$ such that the Riemannian products $S^{2k+1}\times(N^n,g_N)$ have zero $Q$-curvature. In particular, if $(N^n,g_N)$ has non-positive Einstein constant $-(n-1)$, then the standard sphere $S^{2k+1}(1)$ of radius $1$ lies in this one-parameter family of squashed spheres.
\end{corollary}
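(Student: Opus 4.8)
The plan is to specialize Theorems \ref{positive}, \ref{zero} and \ref{np} to $n_1=k$, $n_2=0$, $r=2$, so that $V_1=\mathbb{C}P^k$ carries its normalized Fubini--Study metric and $V_2$ is a point; then $P_q$ is the Hopf bundle $S^{2k+1}\to\mathbb{C}P^k$ and, blowing it down, the disc bundle appearing in the conclusion is the Euclidean ball $B^{2k+2}$. Which of the three theorems to invoke is decided by the sign of the scalar curvature of $(N^n,g_N)$: Theorem \ref{positive} if it is positive, Theorem \ref{zero} if it is zero, and Theorem \ref{np} if it is negative. In the last two cases the extra hypothesis $(n_1+1)|q_i|>p_i$ reduces, since $p_2=1$, to $(k+1)|q_2|>1$, which holds because $k\ge1$ and $|q_2|\ge1$. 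Each theorem therefore produces, for every $k>0$, a one-parameter family of non-homothetic conformally compact Einstein metrics on $X=B^{2k+2}\times N$, whose dimension $p=2k+2+n$ is odd because $n$ is odd.

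Next I would read off the conformal infinity. By the discussion preceding Theorem \ref{vca}, each member of the family has the form $g=\rho^{-2}(g_M+g_N)$ on the interior of $\bar M\times N=B^{2k+2}\times N$, with $\rho$ a defining function for the boundary $P_q\times N$; hence $\rho^2g$ restricts on that boundary to the Riemannian product $g_M|_{P_q}+g_N$, which represents the conformal infinity. Since $g_M$ has the standard bundle form on the disc bundle, $g_M|_{P_q}$ is the sum of a rescaled Fubini--Study metric on the base $\mathbb{C}P^k$ and a rescaled Hopf circle, i.e.\ a squashed sphere, and as the family parameter varies these squashings trace out a one-parameter family of metrics on $S^{2k+1}$. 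Because $p$ is odd, Theorem \ref{vca} applies to $(X,g)$; inspecting its proof --- which follows \cite[Theorem 3.1]{FefGra02} and uses the radial solution $\tilde U=\tilde U(t)$ of $\triangle_g\tilde U=p-1$ --- one sees that the zero-$Q$-curvature representative it produces is exactly the product $g_M|_{P_q}+g_N$. This gives the first assertion: the products $(S^{2k+1},g_M|_{P_q})\times(N,g_N)$ have zero $Q$-curvature.

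For the last assertion I would set $\epsilon=-(n-1)\le0$, so that Theorem \ref{np} applies (or Theorem \ref{zero} when $n=1$), and substitute the explicit solutions of the ODE system into which Eq.\ \eqref{ce} degenerates, in order to express the two squashing data of $g_M|_{P_q}$ --- the length of the Hopf circle and the scale of the Fubini--Study factor --- as functions of the family parameter. I would then solve for the parameter value at which both match the corresponding data of the round metric $g_{S^{2k+1}(1)}$, and verify admissibility: that this value lies in the allowed range and still yields a positive, complete metric compatible with the smooth closing-up at the centre of the ball and with the blow-down at the boundary. I expect this last piece of bookkeeping to be the main obstacle, and carrying it out should show that admissibility of the round normalization forces precisely the value $-(n-1)$ for the Einstein constant of $N$.
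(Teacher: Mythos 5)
Your proposal follows essentially the same route as the paper: specialize Theorems \ref{positive}, \ref{zero}, \ref{np} to $n_1=k$, $n_2=0$, $r=2$, read off the conformal infinity as a squashed $S^{2k+1}$ times $N$, and invoke Theorem \ref{vca}. (The representative produced there is $g_b=\kappa_1^{-2}(g_{\bar M}+g_N)|_{\partial}$, i.e.\ a \emph{constant multiple} of the product you name, but vanishing of $Q$ is scale-invariant, so this is harmless.) The one step you defer as "bookkeeping" — locating the round sphere in the family — is in fact a two-line computation from \eqref{conf} and Proposition \ref{fes} rather than a serious admissibility analysis: with $p=n+2p_1$ the squashing coefficient is $\frac{n+2p_1-1}{4\nu A_1}$, and the round metric requires this to equal $\tfrac12$. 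For $\epsilon<0$ one has $A_1=\frac{1}{2\nu}(p_1+\sqrt{p_1^2+\nu\epsilon})$, so the condition determines $\nu=(n-1)(2p_1+n-1)/\epsilon<0$ (always admissible), and then the $g_N$-coefficient $\frac{1-p}{\nu}=-\epsilon/(n-1)$ equals $1$ exactly when $\epsilon=-(n-1)$ — this, not admissibility, is where the hypothesis on the Einstein constant enters. For $\epsilon=0$ the coefficient is $\tfrac12+\frac{n-1}{4p_1}$, so equality forces $n=1$, $N=S^1$, consistent with $-(n-1)=0$; and for $\epsilon>0$ the coefficient always exceeds $\tfrac12$, which is why the round sphere is excluded in that case.
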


The latter part of Corollary \ref{rpzq} has the following generalization due to Graham \cite{gra09}.
\begin{proposition}\label{graham}
Let $(M^m,g_M)$ and $(N^n,g_N)$ be two odd-dimensional closed Einstein manifolds respectively with Einstein constants $m-1$ and $-(n-1)$. Then the Riemannian product $(M^m\times N^n,g_M+g_N)$ has zero $Q$-curvature.
\end{proposition}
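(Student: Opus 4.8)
The plan is to compute the $Q$-curvature of the product metric $g_M + g_N$ directly, using the fact that for each factor we have an explicit conformally compact Einstein "filling" and hence an explicit solution of the relevant Poisson equation. Recall from the discussion preceding Theorem \ref{vca} that on a conformally compact Einstein manifold $(X^p,g)$ with $p$ odd, the $Q$-curvature of a representative $\bar g = \rho^2 g|_{\partial X}$ of the conformal infinity can be extracted from the asymptotic expansion of the solution $\tilde U$ of $\triangle_g \tilde U = p-1$; more precisely (cf. \cite[Theorem 4.1]{FefGra02}), if $\tilde U = \log\rho + A + B\rho^{p-1} + o(\rho^{p-1})$ in a geodesic-defining-function normal form, then the $Q$-curvature is a universal constant times the boundary value of $B$ (suitably interpreted). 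So the strategy is: (i) realize $(M,g_M)$ as the conformal infinity of the hyperbolic-type Einstein metric on $M\times[0,\epsilon)$ whose existence and explicit form follow from the "positive Einstein constant $m-1$" normalization — indeed $(M,g_M)$ with $\mathrm{Ric} = (m-1)g_M$ is exactly the conformal infinity of a Poincaré–Einstein metric, and likewise realize $(N,g_N)$, with $\mathrm{Ric} = -(n-1)g_N$, as a hypersurface in a complete Einstein manifold via the "negative" analogue; (ii) combine the two into an Einstein filling of the product via the conformal-product construction of Eq. \eqref{ce} (this is precisely the situation where our Proposition \ref{eif}/\eqref{ce} machinery, and the special conformal deformation $g = \rho^{-2}(g_M + g_N)$, applies, now with $M$ itself noncompact and carrying a complete metric); (iii) read off the solution $\tilde U$ for the product from the two factor solutions and verify that the obstruction coefficient $B$ vanishes.

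The key computational step is the combination in (ii)–(iii). The point is that the two Einstein constants $m-1$ and $-(n-1)$ are chosen precisely so that the naive sum $\tilde U = \tilde U_M + \tilde U_N$ (or rather the appropriate superposition dictated by \eqref{ce}) again solves $\triangle_g \tilde U = p - 1$ with $p = m+n$, and — crucially — so that the cross terms, which would otherwise contribute to the order-$(p-1)$ coefficient, cancel. Concretely, I would follow the original proof of \cite[Theorem 3.1]{FefGra02}: write the geodesic defining function expansion for the product Einstein metric, solve the transport equations for the coefficients of $\tilde U$ order by order, and observe that the freedom in the Einstein constants makes the formal solution terminate (polynomially) before reaching order $p-1$, so that the coefficient $B$ — and hence the $Q$-curvature of $g_M + g_N$ — is identically zero. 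This is exactly the "little more work" alluded to just before Theorem \ref{vca}; for the special product at hand the computation is cleaner because both factors are genuinely Einstein (not merely foliated), so no radial ODE system is needed.

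The main obstacle I anticipate is (a) setting up the correct normalization so that the product $g_M + g_N$ is literally the conformal infinity of an Einstein metric with the right volume/defining-function normalization — one must be careful that the defining function used for $M$ and the one used for $N$ are compatible, i.e., induce the same geodesic defining function on the product, and (b) tracking the cross terms $g_M$-vs-$g_N$ in the expansion of $\tilde U$ to high enough order to see the cancellation at order $p-1$; this is where the hypothesis $\mathrm{Ric}_{g_M} = (m-1)g_M$, $\mathrm{Ric}_{g_N} = -(n-1)g_N$ is used in an essential way rather than just for bookkeeping. Once those two points are handled, the vanishing of $Q$ follows formally. An alternative, and perhaps shorter, route would be to invoke the conformal invariance of $\int Q$ together with the explicit conformally-flat-type model when $M$ and $N$ are round spheres of the appropriate radii (where $Q\equiv 0$ is classical), and then argue that the local formula for $Q$ on a Riemannian product depends only on the two Einstein constants and the dimensions — but making "depends only on" precise essentially reduces to the same expansion computation, so I would present the direct argument above as the primary proof.
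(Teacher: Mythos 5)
Your primary argument has a genuine gap at its decisive step, and the mechanism you invoke to close it is incorrect. The strategy itself is viable: with $\mbox{Ric}(g_M)=(m-1)g_M$ and $\mbox{Ric}(g_N)=-(n-1)g_N$, the product $g_M+g_N$ is the conformal infinity of the single, explicit doubly warped Einstein metric $g_+=dt^2+\sinh^2(t)\,g_M+\cosh^2(t)\,g_N$ on $(0,\infty)\times M\times N$ (equivalently $\rho^{-2}\big(d\rho^2+(1-\tfrac{\rho^2}{4})^2g_M+(1+\tfrac{\rho^2}{4})^2g_N\big)$ with $\rho=2e^{-t}$, already in geodesic normal form), so one can indeed try to read off $Q$ from the $\rho^{p-1}\log\rho$ coefficient of $\tilde{U}$ with $p-1=m+n$; your step (ii) of splicing two separate fillings through Eq.~\eqref{ce} is an unnecessary detour. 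But the claim that ``the formal solution terminates (polynomially) before reaching order $p-1$'' is false: the volume density $(1-\tfrac{\rho^2}{4})^m(1+\tfrac{\rho^2}{4})^n$ feeds an infinite series into the transport hierarchy, and the formal solution for $\tilde{U}$ does not terminate. The vanishing of the obstruction $B$ at the indicial order is a nontrivial identity among all lower-order coefficients, and asserting that ``the cross terms cancel'' is precisely the content of the proposition, not a proof of it. The identity can be established, but only by actually integrating the radial Poisson equation: $\triangle_{g_+}\tilde{U}=m+n$ for $\tilde{U}=\tilde{U}(t)$ reduces to $(\sinh^m\! t\cosh^n\! t\,\tilde{U}')'=-(m+n)\sinh^m\! t\cosh^n\! t$, and because $m$ and $n$ are \emph{both} odd, $\int\sinh^m\! t\cosh^n\! t\,dt$ is a polynomial in $\sinh^2 t$ with no term linear in $t$; one then checks that $\tilde{U}=\log\rho+(\mbox{const})+(\mbox{function analytic in }\rho^2)$, with no $\rho^{p-1}\log\rho$ term, whence $Q\equiv0$. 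This is where the individual oddness of $m$ and $n$ enters --- the hypothesis you rightly suspect is essential but never locate.

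The paper's proof is entirely different and much shorter; it is essentially the ``alternative route'' you sketch and then discard. By \cite[Proposition 3.5]{FefGra07}, $Q$ is a universal polynomial in the Ricci tensor and its covariant derivatives, so a metric with parallel Ricci tensor has constant $Q$, and for a product of Einstein metrics that constant depends only on the dimensions and the Einstein constants. One may therefore replace $(M,g_M)$ and $(N,g_N)$ by closed space forms of curvature $\mbox{sgn}(m-1)$ and $\mbox{sgn}(-(n-1))$ without changing $Q$; the resulting product is conformally flat with $\chi=0$ (as $m$, $n$ are odd), and on closed conformally flat manifolds $\int Q\,dv$ is a universal multiple of $\chi$, so the constant $Q$ must vanish. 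Your reason for dismissing this route --- that making ``depends only on the Einstein constants'' precise reduces to the same expansion computation --- is not correct: it follows from naturality of $Q$ together with parallelism of the Ricci tensor, with no asymptotic analysis at all.
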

\begin{remark}
We would like to thank Juhl for bringing to our attention \cite{Juh09}, where he also obtained Proposition \ref{graham} (see Corollary 10.3 therein). In fact, Juhl has a general formula for all $Q$-curvatures of Riemannian products as in Proposition \ref{graham} (Corollary 10.2).
\end{remark}

One simple way to see this is suggested to the author by Graham as follows. Notice that the $Q$-curvature can be expressed in terms of Ricci curvature and its covariant derivatives only (cf. \cite[Proposition 3.5 and the remark following it]{FefGra07}). Therefore an even-dimensional Riemannian manifold has constant $Q$-curvature provided its Ricci tensor is parallel. In this situation, the Riemannian manifold is locally a product of Einstein manifolds, and the $Q$-curvature is given by a universal formula involving the Einstein constants only. We turn back to Proposition \ref{graham}. It is clear that $(M\times N,g_M+g_N)$ has parallel Ricci tensor. Hence it has constant $Q$-curvature. In order to evaluate the constant, one may, without loss of generality, assume that $(M,g_M)$ and $(N,g_N)$ are respectively of constant curvature $\mbox{sgn}(m-1)$ and $\mbox{sgn}(-(n-1))$, where $\mbox{sgn}$ is the usual sign function. Thus $(M\times N,g_M+g_N)$ is conformally flat with zero Euler characteristic. Proposition \ref{graham} now follows from the well-known fact that on closed conformally flat manifolds the integral of $Q$-curvature is a constant multiple of the Euler characteristic.

\begin{remark}
The representative metrics mentioned in Theorem \ref{vca}, denoted by $g_b$, are given by \eqref{can} below. It is easy to see that $g_b$ has parallel Ricci tensor iff its restriction to the principal circle bundle $P_q$ is an Einstein metric (cf. \cite[9.25]{Bes87}, Eq. \eqref{at} and Lemma \ref{nv}), which, however, generally does not hold. Therefore one cannot simply apply the above arguments to show that $g_b$ has constant, and hence vanishing $Q$-curvature.
\end{remark}

Many authors have addressed the existence of a representative metric with constant $Q$-curvature in a given conformal class of Riemannian metrics on an even-dimensional closed manifold. Such a metric necessarily minimizes the $L^2$-norm of $Q$-curvature among conformal metrics with fixed volume. Several existence theorems for constant $Q$-curvature metrics under some generic assumptions can be found in \cite{ChaYan95, Bre03, Ndi07, DjaMal08}. For example, it is shown by Ndiaye \cite[Theorem 1.1]{Ndi07} (cf. \cite[Theorem 1.1]{DjaMal08} for $4$-dimensional case) that a conformal class contains a representative metric with constant $Q$-curvature provided that the kernel of the critical GJMS operator $P_0$ consists of constant maps, and the integral of $Q$-curvature does not equal any positive integer multiple of the counterpart on standard sphere. In Theorem \ref{vca}, the integral of $Q$-curvature is zero, and hence the latter condition holds true. But it is not clear to the author whether the kernel of the corresponding operator $P_0$ consists of constant maps, except for the cases $S^1\times S^3(1)$ and its finite quotients where $\ker P_0=\mathbb{R}$ by \cite[Theorem A]{Gur99}. Therefore we do not know in general whether our vanishing $Q$-curvature metrics are predicted by Ndiaye's theorem.
\begin{remark}
If $\ker P_0\supsetneqq\mathbb{R}$, then Ndiaye's theorem does not apply to this case. On the other hand, if $\ker P_0=\mathbb{R}$, then Ndiaye's theorem asserts our vanishing $Q$-curvature metrics, which turn out to be unique up to homothety in the given conformal classes. However, our results are reasonably explicit.
\end{remark}

The organization of this paper is as follows. In \S\ref{pre}, we review some of the fundamental material about conformally compact Einstein manifolds. In \S\ref{qele}, we introduce the aforementioned quasi-Einstein-like equation \eqref{ce}, and discuss the relevant regularity issues. In \S\ref{constr}, we prove Theorems \ref{positive}, \ref{zero} and \ref{np} through a detailed account of the constructions of conformally compact Einstein metrics, and then describe some simple but important examples. In \S\ref{ci}, we compute the conformal invariants associated to the conformally compact Einstein manifolds constructed in \S\ref{constr}, and prove Theorem \ref{vca} and Corollary \ref{rpzq}.

\textbf{Acknowledgment.} The author is grateful to his PhD supervisor, Prof. M. Y. Wang, for his consistent help and inspiration underlying all of this work. He wishes to thank Drs. J. Cuadros, A. S. Dancer, C. R. Graham, P. Guan, A. Juhl, M. Min-Oo and A. J. Nicas for their interests in this paper and many very valuable comments.

\textbf{Note added in proof.} We would like to thank the referee for bringing to our attention \cite{ManSte06}, where the physicists constructed very similar conformally compact Einstein manifolds. However, our construction is interesting in its own right since our points of view are quite different from theirs. In particular, instead of seeking solutions to the Einstein equation directly, we are looking for solutions to the quasi-Einstein-like equation \eqref{ce}.

\section{Preliminaries}\label{pre}

In this section, we will briefly review some elementary properties of conformally compact Einstein manifolds. For more details, one may refer to \cite{Gra00, And05}.

Let $\bar{X}^{p}=X\cup\partial X$, $p>2$, be a $p$-dimensional smooth compact manifold with interior $X$ (the bulk) and boundary $\partial X$. A defining function $\rho$ for $\partial X$ is a nonnegative smooth function on $\bar{X}$ such that $\rho^{-1}(\{0\})=\partial X$ and $d\rho\ne0$ on $\partial X$. A complete Einstein metric $g$ on $X$ is said to be conformally compact if there exists a defining function $\rho$ such that the conformal metric $\bar{g}=\rho^2g$ extends to a smooth metric on $\bar{X}$. In this case, we call $(X,g)$ a conformally compact Einstein manifold, and $(\bar{X},\bar{g})$ its conformal compactification via $\rho$. A straightforward computation then shows that the sectional curvatures of $g$ are asymptotic to $-|d\rho|_{\bar{g}}^2$ near $\partial X$, where the Ricci tensors of $g$ are thus asymptotic to $-(p-1)|d\rho|_{\bar{g}}^2g$. Since $g$ is an Einstein metric and $d\rho$ is transverse to $\partial X$, $|d\rho|_{\bar{g}}^2$ must be a positive constant on $\partial X$. Hence $g$ has a negative Einstein constant. From now on, we shall normalize $g$ such that $\mbox{Ric}(g)=-(p-1)g$, which forces $|d\rho|_{\bar{g}}^2$ to be identically $1$ on $\partial X$. It follows that the sectional curvatures of $g$ approach $-1$ near $\partial X$. Therefore the normalized conformally compact Einstein metrics are asymptotically hyperbolic.

It is clear that if $\rho$ is a defining function for $\partial X$, then so is $\hat{\rho}\rho$ for any positive smooth function $\hat{\rho}$ on $\bar{X}$. Hence the restriction of $\bar{g}$ to $\partial X$ gives rise to a conformal class of metrics on $\partial X$, called the conformal infinity of $(X,g)$. By \cite[Lemma 2.1]{Gra00}, for any metric $\gamma$ in the conformal infinity, there exists a unique defining function $\sigma$ for $\partial X$, called geodesic defining function, such that on the induced collar neighborhood of $\partial X$ in $\bar{X}$, identified with $[0,\delta)\times\partial X$ for some $0<\delta\ll1$,
\begin{align}\label{gc}
g=\sigma^{-2}(d\sigma^2+g_\sigma),
\end{align}
where $g_\sigma$, $\sigma\in[0,\delta)$, are a one-parameter family of metrics on $\partial X$ with $g_0=\gamma$. It follows that the volume of the set $\{\sigma>\delta\}$ with respect to $g$ will diverge to infinity as $\delta$ decreases to $0$. A careful examination shows that the volume function actually has some very interesting asymptotic behavior (cf. \cite[(3.3) and (3.4)]{Gra00}).
\begin{itemize}
  \item When $p$ is even,
  \begin{align}\label{pe}
  \mbox{Vol}_g(\{\sigma>\delta\})=c_0\delta^{1-p}+(\mbox{odd powers})+c_{p-2}\delta^{-1}+V+o(1);
  \end{align}
  \item When $p$ is odd,
  \begin{align}\label{po}
  \mbox{Vol}_g(\{\sigma>\delta\})=c_0\delta^{1-p}+(\mbox{even powers})+c_{p-3}\delta^{-2}-L\log\delta+V+o(1),
  \end{align}
\end{itemize}
where the constants $c_i$ are explicitly computable from the geometry of $(\partial X,\gamma)$. For example, $c_0=(p-1)^{-1}\mbox{Vol}_\gamma(\partial X)$.

Of particular interest to us in these asymptotic expansions are the constant term $V$ and the coefficient $L$ of the logarithmic term, called respectively the renormalized volume and the conformal anomaly. It follows from \eqref{pe} that $L$ is identically zero in even dimensions. Otherwise, the renormalized volume and the conformal anomaly may a priori depend on the choice of representative metric $\gamma$ in the conformal infinity. However, the next two remarks indicate that the renormalized volume in even dimensions and the conformal anomaly in odd dimensions are actually independent of the choice of $\gamma$.

When $p$ is even, the renormalized volume $V$ appears in the following version of the Gauss-Bonnet-Chern formula \cite[Theorem 4.6]{ChaQinYan08}
\begin{align}\label{gbc}
\int_X\mathfrak{W}_gdv_g+(-1)^{p/2}\pi^{-(p+1)/2}\Gamma(\frac{p+1}{2})V=\chi(X),
\end{align}
where $\Gamma$ is the usual gamma function, $\chi(X)$ is the Euler characteristic of $X$, and $\mathfrak{W}_g$ is a finite sum of contractions of the Weyl tensor $W_g$ and its covariant derivatives. For example, when $p=4$, $\mathfrak{W}_g=(8\pi^2)^{-1}|W_g|_g^2$, and \eqref{gbc} reads (cf. \cite{And01})
\begin{align*}
\int_X|W_g|_g^2dv_g+6V=8\pi^2\chi(X).
\end{align*}
It follows that $V\le\frac{4}{3}\pi^2\chi(X)$, with equality iff $(X,g)$ has constant sectional curvature $-1$. This is precisely
\begin{proposition}\label{4h}
The $4$-dimensional hyperbolic metric achieves the maximal renormalized volume $\frac{4}{3}\pi^2$ among all of conformally compact Einstein metrics on the Euclidean ball $B^4$.
\end{proposition}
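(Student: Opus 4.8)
The plan is to read the statement off directly from the four-dimensional Gauss--Bonnet--Chern formula quoted above, which in fact already yields the inequality in the remark preceding the proposition; it remains only to specialize to $X=B^4$ and to record that the bound is attained. Since $B^4$ is contractible, $\chi(B^4)=1$, so \eqref{gbc} with $p=4$ reads $\int_X|W_g|_g^2dv_g+6V=8\pi^2$ for every conformally compact Einstein metric $g$ on $B^4$, where $V$ is the renormalized volume (which, as recalled above, is independent of the choice of representative $\gamma$ in the conformal infinity). The integrand $|W_g|_g^2$ is pointwise nonnegative, hence $6V\le8\pi^2$, i.e. $V\le\frac{4}{3}\pi^2$, which is the asserted bound.

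For the characterization of the extremal metric I would argue as follows. Equality $V=\frac{4}{3}\pi^2$ forces $\int_X|W_g|_g^2dv_g=0$, hence $W_g\equiv0$, so $g$ is conformally flat; combined with the Einstein normalization $\mbox{Ric}(g)=-3g$, the curvature tensor reduces to its scalar part, so $g$ has constant sectional curvature $-1$. Being in addition complete and defined on the simply connected manifold $B^4$, $(B^4,g)$ is then isometric to hyperbolic $4$-space, i.e. $g$ is the Poincar\'{e} ball metric up to isometry. Conversely, the hyperbolic metric is a conformally compact Einstein metric on $B^4$ with vanishing Weyl tensor, so substituting $W_g=0$ and $\chi(B^4)=1$ back into \eqref{gbc} gives $V=\frac{4}{3}\pi^2$; thus the maximum $\frac{4}{3}\pi^2$ is attained, and precisely by the hyperbolic metric.

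I do not expect a serious obstacle here once \eqref{gbc} is granted: the inequality is a one-line sign argument, and the only point deserving care is the rigidity step, namely upgrading ``conformally flat Einstein with $\mbox{Ric}=-3g$'' from locally hyperbolic to genuinely the hyperbolic metric on the ball, which is handled by simple connectedness together with the classification of complete simply connected space forms. It is worth noting in passing that the same computation shows slightly more: any non-hyperbolic conformally compact Einstein metric on $B^4$ has renormalized volume strictly below $\frac{4}{3}\pi^2$, with deficit exactly $\frac{1}{6}\int_X|W_g|_g^2dv_g$.
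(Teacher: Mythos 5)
Your proof is correct and is essentially the paper's own argument: the paper derives the proposition in exactly the same way, by specializing \eqref{gbc} to $p=4$, using $\mathfrak{W}_g=(8\pi^2)^{-1}|W_g|_g^2\ge0$ to get $V\le\frac{4}{3}\pi^2\chi(X)$, and noting that equality holds iff $W_g\equiv0$, i.e.\ iff $(X,g)$ has constant sectional curvature $-1$. Your added rigidity step (simple connectedness of $B^4$ plus the classification of complete space forms) is a harmless elaboration of what the paper leaves implicit.
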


When $p$ is odd, the conformal anomaly $L$ is related to Branson's $Q$-curvature \cite{Bra95} of the conformal infinity $(\partial X,[\gamma])$ by the formula (cf. \cite{GraZwo03, FefGra02})
\begin{align}\label{confa}
L=\frac{(-1)^{(p-1)/2}2^{2-p}}{((p-1)/2)!((p-3)/2)!}\int_{\partial X}Q_\gamma dv_\gamma.
\end{align}

Recall that the $Q$-curvature is defined originally for even-dimensional closed manifolds (cf. \cite{Bra95}), whose integral is an invariant of the conformal class. Under a conformal deformation of metric $\hat{g}=e^{2f}g$, the $Q$-curvature changes according to the law
\begin{align*}
\hat{Q}=e^{-2kf}(Q+P_0f),
\end{align*}
where $2k$ is the dimension of the underlying manifold, and $P_0$ is the formally self-adjoint $2k$-th order GJMS operator \cite{GraJenMasSpa92} with respect to $g$, which equals respectively the Laplace operator in dimension $2$ and the Paneitz operator in dimension $4$. Since the general definition of $Q$-curvature is quite involved, explicit formulas for $Q$-curvature are only available in low dimensions (cf. \cite{GovPet03, GraJuh07}). For example, in dimension $4$, we have
\begin{align}\label{qcur}
Q=\frac{1}{6}(\triangle R+R^2-3|\mbox{Ric}|^2),
\end{align}
where $R$ denotes the scalar curvature.

\section{A quasi-Einstein-like equation}\label{qele}

In this section, we study Einstein manifolds from the point of view of conformal geometry. Such a standpoint is quite natural in the spirit of conformally compact Einstein manifolds.

\subsection{A simple observation}\label{ob}

Let $(M^m,g_M)$ and $(N^n,g_N)$, $p=m+n>2$, be two connected Riemannian manifolds, and $\rho$ be a positive smooth function on $M$. Consider now the conformal metric $g=\rho^{-2}(g_M+g_N)$ on the product $M\times N$, where $g_M+g_N$ is the usual product metric. The Ricci tensor of $g$ is given by \cite[Theorem 1.159]{Bes87}
  \begin{eqnarray*}
  \mbox{Ric}(g)&=&\mbox{Ric}(g_M)+\mbox{Ric}(g_N)+(p-2)\frac{D_Md\rho}{\rho}-(\frac{\triangle_M\rho}{\rho}
  +(p-1)\frac{|d\rho|_M^2}{\rho^2})(g_M+g_N)\\
  &=&\mbox{Ric}(g_M)+(p-2)\frac{D_Md\rho}{\rho}-(\frac{\triangle_M\rho}{\rho}+(p-1)\frac{|d\rho|_M^2}{\rho^2})g_M\\
  &&+\mbox{Ric}(g_N)-(\frac{\triangle_M\rho}{\rho}
  +(p-1)\frac{|d\rho|_M^2}{\rho^2})g_N,
  \end{eqnarray*}
where $D_M$ and $\triangle_M$ are respectively the Levi-Civita connection and the positive Laplace operator of $g_M$. We want to determine when $g$ is an Einstein metric, i.e.,
\begin{align*}
\mbox{Ric}(g)=\lambda g=\lambda\rho^{-2}g_M+\lambda\rho^{-2}g_N
\end{align*}
for some constant $\lambda\in\mathbb{R}$. There are two cases depending on $n>0$ or $n=0$.

If $n>0$, then the Einstein condition is equivalent to
\begin{align}
\label{pre1}\mbox{Ric}(g_M)+(p-2)\frac{D_Md\rho}{\rho}-(\frac{\triangle_M\rho}{\rho}+(p-1)\frac{|d\rho|_M^2}{\rho^2})g_M
=\frac{\lambda}{\rho^2}g_M,&&\\
\label{pre2}\mbox{Ric}(g_N)-(\frac{\triangle_M\rho}{\rho}+(p-1)\frac{|d\rho|_M^2}{\rho^2})g_N=\frac{\lambda}{\rho^2}g_N.&&
\end{align}
It follows from \eqref{pre2} that
\begin{align*}
\mbox{Ric}(g_N)=(\frac{\triangle_M\rho}{\rho}+(p-1)\frac{|d\rho|_M^2}{\rho^2}+\frac{\lambda}{\rho^2})g_N.
\end{align*}
The scalar quantity in the parenthesis on the right-hand side is a constant when restricted to $N$. Hence $(N,g_N)$ is necessarily an Einstein manifold with Einstein constant
\begin{align*}
\epsilon=\frac{\triangle_M\rho}{\rho}+(p-1)\frac{|d\rho|_M^2}{\rho^2}+\frac{\lambda}{\rho^2}.
\end{align*}
From this formula, we get
\begin{align}
\label{ec}\lambda=\epsilon\rho^2-\rho\triangle_M\rho-(p-1)|d\rho|_M^2.
\end{align}
Substituting \eqref{ec} into \eqref{pre1} leads to
\begin{align}
\label{ce}\mbox{Ric}(g_M)+(p-2)\frac{D_Md\rho}{\rho}=\epsilon g_M.
\end{align}
Conversely, assume $(N,g_N)$ is an Einstein manifold with Einstein constant $\epsilon$, and $\rho$ is a positive smooth function on $M$ which satisfies \eqref{ce}. Then the Ricci tensor of $g$ is
\begin{align*}
\mbox{Ric}(g)=(\epsilon-\frac{\triangle_M\rho}{\rho}-(p-1)\frac{|d\rho|_M^2}{\rho^2})(g_M+g_N)=(\epsilon\rho^2-\rho\triangle_M\rho
-(p-1)|d\rho|_M^2)g.
\end{align*}
Hence $g$ is an Einstein metric as $p>2$, with Einstein constant $\lambda$ given by \eqref{ec}.

If $n=0$, i.e., $N$ is a point, then the latter part of the above arguments is still valid. More precisely, if $\rho$ is a positive smooth function on $M$ such that \eqref{ce} holds true for some constant $\epsilon$, then $g$ is an Einstein metric with Einstein constant $\lambda$ given by \eqref{ec}. However, this is in general only a sufficient condition for $g=\rho^{-2}g_M$ being an Einstein metric.

To sum up, we have shown (compare Proposition \ref{eif}.1 to \cite[Theorem 1(I)]{Cle08})
\begin{proposition}\label{eif}
Let $(M^m,g_M)$ and $(N^n,g_N)$, $p=m+n>2$, be two smooth connected Riemannian manifolds, and $\rho$ be a positive smooth function on $M$.
\begin{enumerate}
  \item If $n>0$, then $g=\rho^{-2}(g_M+g_N)$ is an Einstein metric iff $g_N$ is an Einstein metric with Einstein constant $\epsilon$, and \eqref{ce} holds.
  \item If $n=0$, then $g=\rho^{-2}g_M$ is an Einstein metric if \eqref{ce} holds for some constant $\epsilon$.
\end{enumerate}
In either case, the Einstein constant $\lambda$ of $g$ is given by \eqref{ec}.
\end{proposition}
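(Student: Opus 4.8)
The plan is to derive both statements directly from the conformal transformation law for the Ricci tensor, so the proof is essentially the bookkeeping already carried out in \S\ref{ob}. First I would invoke \cite[Theorem 1.159]{Bes87} for the conformal metric $g=\rho^{-2}(g_M+g_N)$ with background the product metric $g_M+g_N$. Since $\rho$ is pulled back from $M$, the Levi-Civita connection of the product metric splits, so the Hessian $D_{g_M+g_N}d\rho$ has no component mixing $TM$ and $TN$ and agrees with $D_Md\rho$ on the $M$-factor, while $\mbox{Ric}(g_M+g_N)=\mbox{Ric}(g_M)\oplus\mbox{Ric}(g_N)$. This produces precisely the block-diagonal expression for $\mbox{Ric}(g)$ displayed above: one tensor built from $g_M$, $D_Md\rho$, $\triangle_M\rho$, $|d\rho|_M^2$, and one proportional to $g_N$.

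For part 1, since both $g$ and this formula for $\mbox{Ric}(g)$ respect the splitting $T(M\times N)=TM\oplus TN$, the equation $\mbox{Ric}(g)=\lambda g$ is equivalent to the pair \eqref{pre1}--\eqref{pre2}. The key point is that the scalar factor of $g_N$ in \eqref{pre2}, namely $\triangle_M\rho/\rho+(p-1)|d\rho|_M^2/\rho^2+\lambda/\rho^2$, is a function on $M$ only, whereas $\mbox{Ric}(g_N)$ does not depend on the point of $M$ at all; hence that factor must be a constant $\epsilon$, forcing $(N,g_N)$ to be Einstein with Einstein constant $\epsilon$. Solving $\epsilon=\triangle_M\rho/\rho+(p-1)|d\rho|_M^2/\rho^2+\lambda/\rho^2$ for $\lambda$ gives \eqref{ec}, and feeding \eqref{ec} into \eqref{pre1} yields \eqref{ce}. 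For the converse, assuming $(N,g_N)$ Einstein with constant $\epsilon$ and $\rho$ a solution of \eqref{ce}, I would substitute back into the block formula and verify that $\mbox{Ric}(g)$ collapses to $(\epsilon\rho^2-\rho\triangle_M\rho-(p-1)|d\rho|_M^2)\,g$; the hypothesis $p>2$ is what makes this the genuine Einstein condition (the Hessian term carries the coefficient $p-2\neq 0$).

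For part 2 there is no $N$-block, so only the sufficiency direction survives: if $\rho$ solves \eqref{ce} for some constant $\epsilon$, the same substitution shows $\mbox{Ric}(\rho^{-2}g_M)=\lambda\,\rho^{-2}g_M$ with $\lambda$ as in \eqref{ec}. Necessity genuinely fails here, because without an $N$-factor the mechanism that manufactured the constant $\epsilon$ in part 1 is unavailable: \eqref{ce} becomes a single trace-coupled identity on $M$ which need not hold even when $\rho^{-2}g_M$ happens to be Einstein.

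I do not expect a real obstacle; the entire argument is substitution into the conformal Ricci formula. The only step requiring care is the decoupling of $\mbox{Ric}(g)=\lambda g$ into \eqref{pre1}--\eqref{pre2}, i.e., checking that the product structure of $g_M+g_N$ together with $\rho$ being a function on $M$ alone rules out any mixed $TM\otimes TN$ terms in $\mbox{Ric}(g)$. This is exactly where the hypothesis that $\rho$ lives on $M$ enters, and once it is secured everything else is routine.
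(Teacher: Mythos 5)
Your proposal is correct and follows essentially the same route as the paper: apply the conformal Ricci formula of \cite[Theorem 1.159]{Bes87}, decouple $\mbox{Ric}(g)=\lambda g$ into the $M$- and $N$-blocks \eqref{pre1}--\eqref{pre2}, extract the constant $\epsilon$ from the $N$-block to obtain \eqref{ec} and \eqref{ce}, and verify the converse (and the $n=0$ case) by direct substitution. No gaps.
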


When $n>0$, Eq. \eqref{ce} is equivalent to the quasi-Einstein equation \cite{CasShuWei08}
\begin{align}
\label{qee}\mbox{Ric}(\check{g}_M)-n\frac{\check{D}_Md\check{\rho}}{\check{\rho}}=\lambda\check{g}_M,
\end{align}
with $\check{g}_M=\rho^{-2}g_M$ and $\check{\rho}=\rho^{-1}$. Recall that the warped product manifold $(M\times N,\check{g}_M+\check{\rho}^2g_N)$ is an Einstein manifold with Einstein constant $\lambda$ iff \eqref{qee} holds true, and $(N,g_N)$ is an Einstein manifold with some suitable choice of Einstein constant $\epsilon$ \cite[Corollary 9.107]{Bes87}. Since $\check{g}_M+\check{\rho}^2g_N=\rho^{-2}(g_M+g_N)$, we thus conclude that \eqref{qee} holds for $(\check{g}_M,\check{\rho})$ iff \eqref{ce} holds for $(g_M,\rho)$.
\begin{remark}
Some examples of complete positive Einstein manifolds of the warped product type can be found in \cite[\S4]{LvPagPop04}.
\end{remark}

When $n=0$, \eqref{ce} holds for $(g_M,\rho)$ iff $g=\rho^{-2}g_M$ is an Einstein metric with Einstein constant $\lambda$ and the following equation holds for $(g,\breve{\rho})$ with $\breve{\rho}=\log\rho$
\begin{align}\label{pois}
\triangle_g\breve{\rho}=\epsilon\exp(2\breve{\rho})-\lambda.
\end{align}
In case $M$ is closed, we have respectively at the maximum or minimum points for $\breve{\rho}$,
\begin{align*}
0\le\triangle_g\max\breve{\rho}=\epsilon\exp(2\max\breve{\rho})-\lambda,&&0\ge\triangle_g\min\breve{\rho}
=\epsilon\exp(2\min\breve{\rho})-\lambda.
\end{align*}
Hence $\epsilon\exp(2\min\breve{\rho})\le\lambda\le\epsilon\exp(2\max\breve{\rho})$, i.e., $\epsilon\ge0$. If $\epsilon=0$, then $\lambda=0$ and $\triangle_g\breve{\rho}=0$. It follows that $\breve{\rho}$ must be a constant because $M$ is closed. If $\epsilon>0$, then $\lambda>0$. We do not know whether \eqref{pois} admits nontrivial solutions in this situation.

\subsection{Regularity}\label{regu}

The rest of this section is devoted to discussing differentiability of solutions of the quasi-Einstein-like equation \eqref{ce}. Roughly speaking, we will prove that any $C^{1,1}$ solution of \eqref{ce} is real analytic, which is an analogue of \cite[Lemma 2.2]{DanWan08} for the Ricci soliton equation. It enables us later to conclude that the conformally Einstein metrics constructed in \S\ref{constr} are indeed real analytic after we check that they are $C^2$ in some smooth coordinate charts.

We rewrite \eqref{ce} in the form
\begin{align}
\label{nnce}Dd\rho+\frac{\rho}{p-2}(\mbox{Ric}(g)-\epsilon g)=0,
\end{align}
where we have temporarily dropped the subscript $M$ for simplicity.
\begin{lemma}
\begin{align}
\label{sscbi}\mbox{Ric}(D\log\rho)=\frac{1}{2p-2}DR+\frac{R-(m-1)\epsilon}{p-1}D\log\rho,
\end{align}
where $R$ is the scalar curvature of $g$, and by abuse of notation, $\mbox{Ric}$ also stands for the Ricci endomorphism associated to the Ricci tensor of $g$.
\end{lemma}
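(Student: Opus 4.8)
The plan is to obtain \eqref{sscbi} by differentiating the Hessian equation \eqref{nnce} once, contracting, and matching the result against the elementary commutation formula for the third covariant derivatives of a function. Write $h:=Dd\rho$, so that \eqref{ce}/\eqref{nnce} reads $h=\frac{\rho}{p-2}(\epsilon g-\mbox{Ric}(g))$, and let $m=\dim M$ and $R$ be the scalar curvature of $g$. This is a Bianchi-type identity for the quasi-Einstein-like equation, in the spirit of the corresponding identities known for the Ricci soliton and static vacuum equations.

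First I would record the trace of \eqref{nnce}: since $\mbox{tr}_g h=-\triangle\rho$ for the positive Laplacian $\triangle$, taking traces gives $\triangle\rho=\frac{\rho}{p-2}(R-\epsilon m)$, hence $D(\triangle\rho)=\frac{1}{p-2}\bigl((R-\epsilon m)\,D\rho+\rho\,DR\bigr)$. Next I would compute the divergence $\mbox{div}_g h$ in two different ways. Differentiating $h=\frac{\rho}{p-2}(\epsilon g-\mbox{Ric}(g))$ directly and invoking the contracted second Bianchi identity $\mbox{div}_g\,\mbox{Ric}(g)=\frac12 DR$ gives $\mbox{div}_g h=\frac{1}{p-2}\bigl(\epsilon\,D\rho-\mbox{Ric}(D\rho)-\frac{\rho}{2}DR\bigr)$. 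On the other hand, the commutation identity for the Hessian of a function (equivalently the Bochner formula $\triangle_{\mathrm{Hodge}}\,d\rho=\nabla^*\nabla\,d\rho+\mbox{Ric}(D\rho)$, where $\triangle_{\mathrm{Hodge}}\,d\rho=D(\triangle\rho)$ and $\nabla^*\nabla\,d\rho=-\mbox{div}_g h$) gives $\mbox{div}_g h=-D(\triangle\rho)+\mbox{Ric}(D\rho)$.

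Equating the two expressions for $\mbox{div}_g h$, substituting the formula for $D(\triangle\rho)$ from the first step, and collecting the terms proportional to $D\rho$, $DR$ and $\mbox{Ric}(D\rho)$, everything collapses to $(p-1)\mbox{Ric}(D\rho)=\bigl(R-(m-1)\epsilon\bigr)D\rho+\frac{\rho}{2}DR$. Dividing by $(p-1)\rho$ and using $D\log\rho=\rho^{-1}D\rho$ yields \eqref{sscbi}. The computation is entirely mechanical; the only genuine point of care is keeping the sign conventions straight — in particular, that $\triangle$ is the analyst's (positive) Laplacian, so $\mbox{tr}_g(Dd\rho)=-\triangle\rho$, and that the commutation formula reads $\mbox{div}_g(Ddf)=-D(\triangle f)+\mbox{Ric}(Df)$ with exactly that sign — after which the algebra in the final step matches \eqref{sscbi} on the nose.
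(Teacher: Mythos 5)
Your proof is correct: the two expressions for $\mbox{div}_g(Dd\rho)$ and the signs (positive Laplacian, contracted Bianchi identity $\mbox{div}_g\,\mbox{Ric}=\tfrac12 DR$) all check out, and the final algebra does land on \eqref{sscbi}. This is essentially the same argument as the paper's, which carries out the identical computation in a geodesic frame — differentiating \eqref{nnce}, commuting third derivatives via the Ricci identity, and contracting — rather than invoking the packaged commutation formula $\mbox{div}_g(Ddf)=-D(\triangle f)+\mbox{Ric}(Df)$ as you do.
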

\begin{proof}
Given $x\in M$, let $E_1,\cdots,E_m$ be a geodesic frame at $x$. The covariant differential of \eqref{nnce}, when evaluated at $x$, gives
\begin{eqnarray*}
D^2d\rho(E_i,E_j,E_k)&=&-\frac{1}{p-2}D(\rho(\mbox{Ric}(g)-\epsilon g))(E_i,E_j,E_k)\\
&=&-\frac{1}{p-2}(E_k(\rho)(\mbox{Ric}(E_i,E_j)-\epsilon g(E_i,E_j))+\rho D\mbox{Ric}(E_i,E_j,E_k)).\\
\end{eqnarray*}
The Ricci identity says
\begin{align*}
D^2d\rho(E_i,E_j,E_k)-D^2d\rho(E_i,E_k,E_j)=\mbox{Riem}(E_j,E_k,D\rho,E_i),
\end{align*}
where $\mbox{Riem}$ is the Riemann curvature tensor of $g$. Hence
\begin{eqnarray*}
\mbox{Ric}(E_j,D\rho)&=&\sum_{i=1}^m\mbox{Riem}(E_j,E_i,D\rho,E_i)\\
&=&\sum_{i=1}^m(D^2d\rho(E_i,E_j,E_i)-D^2d\rho(E_i,E_i,E_j))\\
&=&-\frac{1}{p-2}\sum_{i=1}^m(E_i(\rho)(\mbox{Ric}(E_i,E_j)-\epsilon g(E_i,E_j))+\rho D\mbox{Ric}(E_i,E_j,E_i)\\
&&-E_j(\rho)(\mbox{Ric}(E_i,E_i)-\epsilon g(E_i,E_i))-\rho D\mbox{Ric}(E_i,E_i,E_j))\\
&=&-\frac{1}{p-2}(\mbox{Ric}(D\rho,E_j)-\epsilon E_j(\rho)-\rho\delta\mbox{Ric}(E_j)-E_j(\rho)(R-m\epsilon)-\rho E_j(R))\\
&=&-\frac{1}{p-2}(\mbox{Ric}(E_j,D\rho)-E_j(\rho)(R-(m-1)\epsilon)-\frac{1}{2}\rho E_j(R)),
\end{eqnarray*}
where $\delta$ is the divergence operator with respect to $g$, and the contracted differential Bianchi identity $\delta\mbox{Ric}=-\frac{1}{2}dR$ has been used in the last equality. Thus we have
\begin{align*}
\mbox{Ric}(E_j,D\rho)=\frac{R-(m-1)\epsilon}{p-1}E_j(\rho)+\frac{\rho}{2p-2}E_j(R).
\end{align*}
The formula \eqref{sscbi} then follows from dividing by $\rho$ both sides of this equation.
\end{proof}

For convenience, let $v=(p-2)\log\rho$, i.e., $\rho=\exp(\frac{v}{p-2})$. It is easy to see that
\begin{eqnarray*}
\frac{d\rho}{\rho}=\frac{dv}{p-2},&&\frac{Dd\rho}{\rho}=\frac{Ddv}{p-2}+\frac{dv\otimes dv}{(p-2)^2}.
\end{eqnarray*}
Then \eqref{nnce} and \eqref{sscbi} become
\begin{eqnarray}
&&\label{nnnce}\mbox{Ric}(g)=\epsilon g-Ddv-\frac{dv\otimes dv}{p-2},\\
&&\label{nsscb}\mbox{Ric}(Dv)=\frac{p-2}{2p-2}DR+\frac{R-(m-1)\epsilon}{p-1}Dv.
\end{eqnarray}

\begin{lemma}
\begin{align}
\label{csbi}\triangle dv=\frac{2p-2}{p-2}dv\circ\mbox{Ric}+\frac{1}{p-2}d(|dv|^2)-\frac{2(R-(m-1)\epsilon)}{p-2}dv.
\end{align}
\end{lemma}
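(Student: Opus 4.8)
The plan is to reduce the identity to the two first-order equations \eqref{nnnce} and \eqref{nsscb} already in hand, together with the elementary fact that on an exact $1$-form the Hodge--de Rham Laplacian $\triangle$ (the positive Laplace operator on $1$-forms) satisfies $\triangle dv=d\delta dv=d(\triangle v)$, where $\triangle v$ denotes the positive Laplacian of the function $v$.

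First I would take the $g$-trace of \eqref{nnnce}. Using $\mbox{tr}_g\mbox{Ric}(g)=R$, $\mbox{tr}_g(\epsilon g)=m\epsilon$, $\mbox{tr}_g(Ddv)=-\triangle v$ and $\mbox{tr}_g(dv\otimes dv)=|dv|^2$, this gives $R=m\epsilon+\triangle v-\frac{|dv|^2}{p-2}$, hence $\triangle v=R-m\epsilon+\frac{|dv|^2}{p-2}$. Applying the exterior derivative and invoking $\triangle dv=d(\triangle v)$ yields $\triangle dv=dR+\frac{1}{p-2}\,d(|dv|^2)$.

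Next I would eliminate $dR$ by means of \eqref{nsscb}. Solving that equation for $DR$ (equivalently, for the $1$-form $dR$ after lowering an index) and simplifying the coefficient $\frac{2p-2}{(p-2)(p-1)}=\frac{2}{p-2}$, one obtains $dR=\frac{2p-2}{p-2}\,dv\circ\mbox{Ric}-\frac{2(R-(m-1)\epsilon)}{p-2}\,dv$, where $dv\circ\mbox{Ric}$ is the $1$-form metrically dual to the vector field $\mbox{Ric}(Dv)$. Substituting this into the previous expression for $\triangle dv$ produces exactly \eqref{csbi}.

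I do not expect a genuine obstacle here: once \eqref{nnnce} and \eqref{nsscb} are available the argument is purely formal. The only points that need care are bookkeeping with sign conventions --- that $\triangle$ is the \emph{positive} Laplace operator, so that $\mbox{tr}_g(Ddv)=-\triangle v$ and that on the exact form $dv$ one has $\triangle dv=d(\triangle v)$ --- and the small algebraic simplification of the constants extracted from \eqref{nsscb}.
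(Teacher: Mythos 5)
Your proposal is correct and follows essentially the same route as the paper: trace \eqref{nnnce} to get $R=\triangle v-\frac{1}{p-2}|dv|^2+m\epsilon$, differentiate and use $d\triangle v=\triangle dv$ on the exact form $dv$, then combine with \eqref{nsscb} (the paper substitutes the expression for $dR$ into \eqref{nsscb} and solves for $\triangle dv$, whereas you solve \eqref{nsscb} for $dR$ first — an identical algebraic manipulation, including the simplification $\frac{2p-2}{(p-2)(p-1)}=\frac{2}{p-2}$).
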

\begin{proof}
The trace of \eqref{nnnce} is
\begin{align*}
R=\triangle v-\frac{1}{p-2}|dv|^2+m\epsilon.
\end{align*}
We differentiate this equality to get
\begin{align*}
dR=d\triangle v-\frac{1}{p-2}d(|dv|^2)=\triangle dv-\frac{1}{p-2}d(|dv|^2).
\end{align*}
For any vector field $X$ on $M$, it follows from the formula \eqref{nsscb} that
\begin{eqnarray*}
dv\circ\mbox{Ric}(X)&=&\mbox{Ric}(X,Dv)\\
&=&\frac{p-2}{2p-2}dR(X)+\frac{R-(m-1)\epsilon}{p-1}dv(X)\\
&=&\frac{p-2}{2p-2}\triangle dv(X)-\frac{1}{2p-2}d(|dv|^2)(X)+\frac{R-(m-1)\epsilon}{p-1}dv(X).
\end{eqnarray*}
Sorting this out leads to the formula \eqref{csbi}.
\end{proof}
We shall combine \eqref{nnnce} and \eqref{csbi} to form a system for the unknowns $(g,v)$, although the latter equation is a consequence of the former. This system is a special case of a little more general system for the unknowns $(g,\omega)\in S^2T^\ast M\oplus T^\ast M$
\begin{align}
\label{fsf}\mbox{Ric}(g)-\epsilon g+\delta^\ast\omega+\frac{\omega\otimes\omega}{p-2}=0,&&\\
\label{fss}\triangle\omega-\frac{2p-2}{p-2}\omega\circ\mbox{Ric}-\frac{1}{p-2}d(|\omega|^2)
+\frac{2(R-(m-1)\epsilon)}{p-2}\omega=0,&&
\end{align}
where $\delta^\ast$ is the symmetrized covariant derivative with respect to $g$. One important feature of system \eqref{fsf} and \eqref{fss} is that it becomes a quasi-linear elliptic system in harmonic coordinates. This leads to a satifactory regularity result.
\begin{proposition}\label{reg}
Any $C^{1,1}$ solution $(g,\omega)$ of system \eqref{fsf} and \eqref{fss} is real analytic.
\end{proposition}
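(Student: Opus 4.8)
The plan is to exploit the standard fact that the Einstein operator becomes elliptic once one fixes a gauge, and that harmonic coordinates provide exactly such a gauge for the first equation. First I would recall that in harmonic coordinates $\{x^a\}$ for $g$ (i.e.\ coordinates satisfying $\triangle_g x^a = 0$), the Ricci tensor takes the form
\begin{align*}
\mbox{Ric}(g)_{ab} = -\tfrac12 g^{cd}\partial_c\partial_d g_{ab} + Q_{ab}(g,\partial g),
\end{align*}
where $Q_{ab}$ is a universal expression that is a polynomial in $g^{-1}$ and quadratic in $\partial g$ (cf.\ \cite[Lemma 4.1]{DanWan08} or the classical references on harmonic-coordinate regularity). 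Substituting this into \eqref{fsf} shows that, in harmonic coordinates, \eqref{fsf} reads
\begin{align*}
-\tfrac12 g^{cd}\partial_c\partial_d g_{ab} = -Q_{ab}(g,\partial g) + \epsilon g_{ab} - (\delta^\ast\omega)_{ab} - \tfrac{1}{p-2}\omega_a\omega_b,
\end{align*}
and since $\delta^\ast\omega$ involves only first derivatives of $\omega$ (and the Christoffel symbols, hence first derivatives of $g$), the right-hand side involves only $g, \partial g, \omega, \partial\omega$. Equation \eqref{fss} is already in the form $\triangle_g\omega = (\text{lower order})$, and $\triangle_g$ acting on a one-form has principal part $-g^{cd}\partial_c\partial_d$ as well, with lower-order terms again polynomial in $g^{-1}$, $\partial g$, $\omega$, $\partial\omega$ (the term $\omega\circ\mbox{Ric}$ is the only place a priori carrying two derivatives of $g$, but one rewrites $\mbox{Ric}$ using \eqref{fsf} itself as $\epsilon g - \delta^\ast\omega - \tfrac{1}{p-2}\omega\otimes\omega$, which is first order in $\omega$ and zeroth order in $g$). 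Thus in harmonic coordinates the pair $(g,\omega)$ satisfies a determined second-order system
\begin{align*}
g^{cd}\partial_c\partial_d (g_{ab}, \omega_e) = F(x, g, \omega, \partial g, \partial\omega),
\end{align*}
with $F$ real-analytic (indeed polynomial) in its arguments, and with the leading coefficient matrix $g^{cd}$ positive definite — a quasilinear elliptic system.

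The second step is a bootstrap. One starts with a $C^{1,1}$ solution, which in particular is $W^{2,p}_{\mathrm{loc}}$ for all finite $p$; harmonic coordinates for a $C^{1,1}$ (hence $C^{0,\alpha}$-after-differentiation, i.e.\ $W^{2,p}$) metric exist and are themselves of class $W^{3,p}_{\mathrm{loc}}\subset C^{2,\alpha}$ by the standard construction (solve $\triangle_g x^a = 0$ and apply elliptic regularity to the Laplacian with $C^{0,\alpha}$ coefficients). In these coordinates $g\in W^{2,p}\subset C^{1,\alpha}$ and $\omega\in W^{2,p}\subset C^{1,\alpha}$, so $F\in C^{0,\alpha}$; Schauder theory applied to the elliptic system then upgrades $(g,\omega)$ to $C^{2,\alpha}$, hence $F\in C^{1,\alpha}$, hence $(g,\omega)\in C^{3,\alpha}$, and so on, giving $C^\infty$ by induction. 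Finally, since $F$ is real-analytic in all of its arguments, the Morrey–Nirenberg analytic-elliptic-regularity theorem (analyticity of solutions to nonlinear elliptic systems with analytic nonlinearity, cf.\ \cite[Lemma 2.2]{DanWan08} for precisely this style of argument in the Ricci-soliton case) applies and shows that $(g,\omega)$ is real-analytic in harmonic coordinates. Real-analyticity is a coordinate-independent notion once one knows the transition maps are analytic: the harmonic coordinates are determined by an analytic elliptic equation ($\triangle_g x^a = 0$) with now-analytic coefficients, so they are analytic functions of any other harmonic coordinate system, and analyticity of $g$ in one analytic atlas passes to the original smooth structure.

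The main obstacle — and the only real subtlety — is the principal-symbol bookkeeping in \eqref{fss}: the term $\tfrac{2p-2}{p-2}\,\omega\circ\mbox{Ric}$ naively contributes second derivatives of $g$ to the equation for $\omega$, which would spoil the claim that the system is determined elliptic of the stated orders. The resolution is to observe that this occurrence of $\mbox{Ric}$ may be eliminated algebraically using \eqref{fsf}, trading $\mbox{Ric}(g)$ for $\epsilon g - \delta^\ast\omega - \tfrac{1}{p-2}\omega\otimes\omega$; after this substitution the $\omega$-equation contains at most first derivatives of $\omega$ and no derivatives of $g$ beyond those hidden in $\triangle_g$ and the connection terms, so the diagonal principal part is genuinely $(g^{cd}\partial_c\partial_d)\oplus(g^{cd}\partial_c\partial_d)$ and ellipticity is manifest. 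A secondary point worth a sentence in the write-up is that the harmonic-coordinate construction requires the metric to have Hölder-continuous derivatives to get $C^{2,\alpha}$ charts to launch the bootstrap; $C^{1,1}\subset C^{1,\alpha}$ for every $\alpha<1$, which is exactly enough. Everything else is the routine elliptic bootstrap-plus-analyticity package, identical in structure to the Ricci-soliton regularity lemma cited above.
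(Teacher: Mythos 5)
Your proposal is correct and follows the same skeleton as the paper's proof: DeTurck--Kazdan harmonic coordinates for a $C^{1,1}$ metric (the paper's Lemma \ref{hc} and Corollary \ref{co}, citing \cite{DeTKaz81}), an elliptic upgrade from $W^{2,q}\cap C^{1,\alpha}$ to $C^{2,\alpha}$, and Morrey's analyticity theorem \cite[Theorem 6.7.6]{Mor66} for the resulting quasi-linear elliptic system. The one place where you genuinely diverge is the treatment of the coupling term $\frac{2p-2}{p-2}\,\omega\circ\mbox{Ric}$ in \eqref{fss}: you eliminate $\mbox{Ric}$ algebraically by substituting \eqref{fsf}, which makes the principal part of the system genuinely diagonal. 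The paper leaves the system untouched; it first upgrades $g$ to $C^{2,\alpha}$ using \eqref{fsf} alone (so that $\mbox{Ric}(g)$ becomes a $C^{0,\alpha}$ coefficient when \eqref{fss} is then read as an equation for $\omega$), and for the analyticity step it observes that the principal symbol of the linearized coupled system is block-triangular, $(h,\psi)\mapsto(\frac12|\xi|_g^2h,\,|\xi|_g^2\psi+\Theta_{(g,\omega)}(\xi,h))$ with $\Theta$ linear in $h$, hence an isomorphism for $\xi\ne0$ --- ellipticity in Morrey's sense does not require a diagonal symbol. Both devices work; yours gives a cleaner symbol at the cost of rewriting the equations, the paper's keeps the system as stated. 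Two small points to tighten: (i) the term $\frac{2(R-(m-1)\epsilon)}{p-2}\omega$ in \eqref{fss} also carries two derivatives of $g$ through the scalar curvature $R$, so your parenthetical claim that $\omega\circ\mbox{Ric}$ is the only such occurrence is not quite right, though tracing \eqref{fsf} removes $R$ in exactly the same way; (ii) the passage from a $W^{2,q}$ strong solution with $C^{0,\alpha}$ coefficients to $C^{2,\alpha}$ is not classical Schauder theory but the $L^p$-to-Schauder statement \cite[Theorem 9.19]{GilTru01}, which is what the paper invokes and which your starting regularity $C^{1,1}\subset W^{2,q}_{\mathrm{loc}}$ makes applicable.
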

The local nature of real analyticity makes it safe to work in an arbitrarily small neighborhood of a given point. We proceed to establish a useful lemma \cite[Lemma 1.2]{DeTKaz81}.
\begin{lemma}\label{hc}
Let $g$ be a metric of class $C^{1,1}$ in a coordinate neighborhood of $x$. For any $0<\alpha<1$ and $1<q<\infty$, there exist harmonic coordinates in a sub-neighborhood of $x$, which are $C^{2,\alpha}\cap W^{3,q}$ functions of the original coordinates.
\end{lemma}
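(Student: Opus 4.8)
The plan is to obtain harmonic coordinates as solutions of a Dirichlet problem for $\triangle_g$ on a small ball about $x$ in the given chart, and then to bootstrap their regularity by standard elliptic theory. First I would fix the ambient chart $U\ni x$ and, after an affine change of the ambient coordinates, arrange that $x$ corresponds to the origin and $g_{ij}(0)=\delta_{ij}$. Writing $\triangle_g$ in divergence form,
\begin{align*}
\triangle_g u=-\frac{1}{\sqrt{\det g}}\,\partial_i\!\left(\sqrt{\det g}\;g^{ij}\partial_j u\right),
\end{align*}
the leading coefficients $A^{ij}:=\sqrt{\det g}\,g^{ij}$ are of class $C^{1,1}$ (they are algebraic functions, with nonvanishing denominator near $0$, of the $C^{1,1}$ components $g_{ij}$), hence in particular continuous, uniformly elliptic on a small ball, and normalized so that $A^{ij}(0)=\delta^{ij}$. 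On the ball $B_r=B_r(0)$ for $r$ small I would solve
\begin{align*}
\triangle_g y^k=0\ \text{in}\ B_r,\qquad y^k=x^k\ \text{on}\ \partial B_r,\qquad k=1,\dots,m,
\end{align*}
which has a unique weak solution $y^k\in x^k+W^{1,2}_0(B_r)$ by Lax--Milgram, the $y^k$ being $g$-harmonic by construction.

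Second, I would prove the asserted regularity. Since the divergence-form coefficients $A^{ij}$ lie in $C^{1,1}\subset C^{1,\alpha}$ for every $\alpha\in(0,1)$ and the right-hand side vanishes, Schauder estimates for divergence-form equations give $y^k\in C^{2,\alpha}_{loc}(B_r)$. For the Sobolev claim I would differentiate the equation: each $\partial_\ell y^k$ solves $\partial_i\!\left(A^{ij}\partial_j\partial_\ell y^k\right)=-\partial_i\!\left((\partial_\ell A^{ij})\,\partial_j y^k\right)$, whose right-hand side is the divergence of the vector field $(\partial_\ell A^{ij})\,\partial_j y^k$; this field is the product of the Lipschitz function $\partial_\ell A^{ij}$ with $\partial_j y^k\in C^{1,\alpha}$, hence lies in $W^{1,\infty}_{loc}$, so its divergence lies in $L^q_{loc}$. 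Rewriting the equation for $\partial_\ell y^k$ in non-divergence form and applying the Calder\'on--Zygmund interior $W^{2,q}$ estimate for the continuous-coefficient operator $A^{ij}\partial_i\partial_j$ yields $\partial_\ell y^k\in W^{2,q}_{loc}$, i.e.\ $y^k\in W^{3,q}_{loc}(B_r)$.

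Third — and this is the step carrying the real content — I would check that $dy^1,\dots,dy^m$ are pointwise linearly independent near $0$ for $r$ small, so that $(y^1,\dots,y^m)$ is a genuine coordinate system on a sub-neighborhood of $x$. Write $y^k=x^k+w^k$, so that $w^k\in W^{1,2}_0(B_r)$ solves $\partial_i(A^{ij}\partial_j w^k)=-\partial_i(A^{ik}-\delta^{ik})$, and $|A^{ik}-\delta^{ik}|\le Cr$ on $B_r$ because $A$ is Lipschitz with $A(0)=I$. Rescaling $B_r$ to the unit ball, the dilated coefficients $A^{ij}(r\,\cdot\,)$ converge — indeed in $C^{0,\alpha}$ — to the constant matrix $(\delta^{ij})$ while the ellipticity constants stay fixed, and the dilated inhomogeneity is $O(r)$ in $C^{0,\alpha}$; the divergence-form Schauder estimate therefore forces $\|dw^k\|_{C^0(B_{r/2})}\to0$ as $r\to0$. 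Hence for $r$ small the Jacobian $\big(\partial y^k/\partial x^j\big)=I+\big(\partial w^k/\partial x^j\big)$ is invertible on $B_{r/2}$, and $(y^k)$ is a $C^{2,\alpha}\cap W^{3,q}$ harmonic chart there.

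The main obstacle is this last non-degeneracy step: the existence and regularity assertions are routine invocations of elliptic theory, but one must verify that the $g$-harmonic functions actually define an immersion near $x$. The geometric input is precisely the perturbation/scaling estimate that pins the Jacobian close to the identity as the ball shrinks, which in turn relies on having elliptic estimates with constants uniform under the dilation.
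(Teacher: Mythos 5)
Your proposal is correct, and its skeleton is the same as the paper's (which is the DeTurck--Kazdan argument): solve a Dirichlet problem for $\triangle_g$ in the given chart, bootstrap the solutions to $C^{2,\alpha}\cap W^{3,q}$ by elliptic regularity with $C^{1,1}\subset C^{1,\alpha}\cap W^{2,\infty}$ coefficients, and check that the resulting functions have nondegenerate Jacobian so the inverse function theorem applies. The two treatments diverge on exactly one step, and it is the step you correctly identify as carrying the real content. The paper simply asserts (citing the Schauder existence theorem) that one may take harmonic functions $z^i$ with $\partial_{y^j}z^i(y(x))=\delta^i_j$ at the center point, and then invokes the inverse function theorem; implicitly this is arranged by post-composing with a constant linear map, and only nondegeneracy at the single point $y(x)$ is needed. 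You instead fix the boundary data $x^k$ and prove, via the rescaling of $B_r$ to $B_1$ and the uniformity of the divergence-form Schauder constants under that dilation, that $\|dw^k\|_{C^0(B_{r/2})}=O(r)$, so the Jacobian is uniformly close to the identity. Your route is more work but supplies the quantitative justification the paper leaves implicit, and it gives nondegeneracy on a whole ball rather than at a point. Two minor remarks: your passage from the a priori Calder\'on--Zygmund estimate to the conclusion $\partial_\ell y^k\in W^{2,q}_{loc}$ still needs the standard difference-quotient (or solve-and-compare) device to know the derivative is in $W^{2,q}$ in the first place --- this is precisely the content of the interior higher-regularity theorem the paper cites --- and you only obtain interior regularity where the paper states regularity up to $\bar V$, but interior regularity on the sub-neighborhood is all that is ever used downstream.
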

\begin{proof}
Let
\begin{align*}
y=(y^1,\cdots,y^m):U\subset M^m\rightarrow V\subset\mathbb{R}^m
\end{align*}
be a local coordinate system in the neighborhood $x\in U$, and $g_{ij}=g(\partial_{y^i},\partial_{y^j})$ with inverse matrix $(g^{ij})$. To find harmonic coordinates, we need to solve the Laplace equation
\begin{align}
\label{he}\triangle u=-\sum_{i,j=1}^m(g^{ij}\partial^2_{y^iy^j}u+\frac{1}{\sqrt{G}}\partial_{y^i}(\sqrt{G}g^{ij})\partial_{y^j}u)=0,
\end{align}
where $G$ is the determinant of the positive-definite matrix $(g_{ij})$. Without loss of generality, assume $V$ is a smooth bounded domain, and $g_{ij}\in C^{1,1}(\bar{V})$. In particular, $g_{ij}\in C^{1,\alpha}(\bar{V})$, and hence the coefficients of \eqref{he} belong to $C^{0,\alpha}(\bar{V})$. By \cite[Theorem 6.13]{GilTru01}, \eqref{he} has solutions $z^i\in C^{2,\alpha}(V)$, $1\le i\le m$, which satisfy $\partial_{y^j}z^i(y(x))=\delta_j^i$. Since $g_{ij}\in C^{1,\alpha}(\bar{V})\cap C^{1,1}(\bar{V})$, we now apply \cite[Theorem 9.19]{GilTru01} to conclude that $z^i\in C^{2,\alpha}(\bar{V})\cap W^{3,q}(V)$. Notice that the Jacobian matrix $(\partial_{y^j}z^i)$ is nonsingular at $y(x)$. The inverse function theorem thus asserts a sub-neighborhood $y(x)\in\hat{V}\subset V$, in which $z=(z^1,\cdots,z^m)$ is invertible. Finally, let $\hat{U}=y^{-1}(\hat{V})$ and $\hat{W}=z(\hat{V})$, then
\begin{align*}
z\circ y=(z^1(y^1,\cdots,y^m),\cdots,z^m(y^1,\cdots,y^m)):\hat{U}\subset M^m\rightarrow\hat{W}\subset\mathbb{R}^m
\end{align*}
gives the desired local harmonic coordinate system in the sub-neighborhood $x\in\hat{U}$.
\end{proof}

\begin{corollary}\label{co}
Let $g$ be a metric of class $C^{1,1}$ in a coordinate neighborhood of $x$. For any $0<\alpha<1$ and $1<q<\infty$, there exist harmonic coordinates in a sub-neighborhood of $x$, in which $g$ is of class $C^{1,\alpha}\cap W^{2,q}$.
\end{corollary}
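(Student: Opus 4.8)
The plan is to pull the metric back through the harmonic coordinates produced in Lemma \ref{hc} and then simply track its regularity; the statement is a matter of bookkeeping with the composition and multiplication of H\"{o}lder and Sobolev functions. Since the coordinate neighborhood may be taken bounded, on it one has $W^{2,Q}\subset W^{2,q}$ and $C^{1,\beta}\subset C^{1,\alpha}$ whenever $Q\ge q$ and $\beta\ge\alpha$; hence it suffices to establish the claim for all sufficiently large $q$, and I shall assume from now on that $q>m$.

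First I would apply Lemma \ref{hc} with the given $\alpha$ and this (enlarged) $q$ to obtain harmonic coordinates $z=(z^1,\dots,z^m)$ which, as functions of the original coordinates, lie in $C^{2,\alpha}\cap W^{3,q}$ and have nonsingular Jacobian at $x$. Shrinking the neighborhood, the inverse coordinate change $y=y(z)$ exists; it is $C^{2,\alpha}$ by the ordinary inverse function theorem, and also $W^{3,q}$ by its Sobolev version, which applies because $q>m$ embeds $W^{3,q}$ into $C^{2}$ and thereby legitimizes the chain and product manipulations needed to invert. Thus $y(z)$ is a bi-Lipschitz $C^{2,\alpha}$ diffeomorphism whose Jacobian entries $\partial_{z^k}y^i$ lie in $C^{1,\alpha}\cap W^{2,q}$.

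Next, writing the metric in the new coordinates as $\tilde g_{kl}=(g_{ij}\circ y)\,\partial_{z^k}y^i\,\partial_{z^l}y^j$, I would proceed as follows. Since $g_{ij}\in C^{1,1}=W^{2,\infty}_{\mathrm{loc}}$ and $y(z)$ is a bi-Lipschitz $C^{2,\alpha}$ diffeomorphism, the composition $g_{ij}\circ y$ is again $C^{1,1}$ and lies in $W^{2,q}$. Multiplying by the two Jacobian factors and using that, for $q>m$, both $C^{1,\alpha}$ and $W^{2,q}$ are Banach algebras on a bounded domain, one obtains $\tilde g_{kl}\in C^{1,\alpha}\cap W^{2,q}$; the reason only $C^{1,\alpha}$, rather than $C^{1,1}$, survives is that differentiating the Jacobian factors brings in the second derivatives of the coordinate change, which are merely $C^{0,\alpha}$, while computing $\partial^2\tilde g_{kl}$ brings in the third derivatives of $y(z)$ — this is exactly where the $W^{3,q}$ regularity of $y(z)$ is used.

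The argument involves no serious obstacle; the one place demanding genuine care is the second step — the Sobolev inverse function theorem for $y(z)$ together with the chain and product rules in $W^{3,q}$ and $W^{2,q}$ — and this is precisely what the preliminary reduction to $q>m$ is designed to tame, since it puts every space in play inside $C^{2}$ (respectively $C^{1}$) and reduces the needed estimates to standard facts. One could instead observe that in harmonic coordinates the metric formally satisfies an elliptic equation expressing $g_{ij}$ through $g$, $\partial g$ and $\mathrm{Ric}(g)$, but for a general $C^{1,1}$ metric $\mathrm{Ric}(g)$ is only a distribution, so the direct coordinate-change computation above is the appropriate tool here.
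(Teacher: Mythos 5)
Your proposal is correct and follows essentially the same route as the paper: apply Lemma \ref{hc}, invert the coordinate change via the inverse function theorem to get $y(z)\in C^{2,\alpha}\cap W^{3,q}$, and then read off the regularity of $g(\partial_{z^i},\partial_{z^j})=\sum_{k,l}g(\partial_{y^k},\partial_{y^l})\partial_{z^i}y^k\partial_{z^j}y^l$ from the composition and product rules. Your preliminary reduction to $q>m$ is an extra (harmless) convenience that the paper does not bother with, since the needed product and chain-rule estimates already hold for all $1<q<\infty$ given that every factor is at least Lipschitz.
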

\begin{proof}
Let $y=(y^1,\cdots,y^m):U\rightarrow V$ be a local coordinate system in the neighborhood $x\in U$ as before. Lemma \ref{hc} asserts a local harmonic coordinate system $z\circ y=(z^1,\cdots,z^m):\hat{U}\rightarrow\hat{W}$ in the sub-neighborhood $x\in\hat{U}\subset U$ such that $z^i=z^i(y^1,\cdots,y^m)$, $1\le i\le m$, are of class $C^{2,\alpha}(\bar{\hat{V}})\cap W^{3,q}(\hat{V})$, where $\hat{V}=y(\hat{U})$ is a smooth bounded domain in $\mathbb{R}^m$. It follows from the inverse function theorem that $y^i=y^i(z^1,\cdots,z^m)$, $1\le i\le m$, are of class $C^{2,\alpha}(\bar{\hat{W}})\cap W^{3,q}(\hat{W})$. Since $g(\partial_{y^j},\partial_{y^i})\in C^{1,1}(\bar{\hat{V}})$, the first derivatives $\partial_{y^k}g(\partial_{y^j},\partial_{y^i})$ are Lipschitz continuous, and therefore are differentiable almost everywhere. Furthermore, the second derivatives $\partial^2_{y^ly^k}g(\partial_{y^j},\partial_{y^i})$ are essentially bounded. Notice that
\begin{align*}
g(\partial_{z^i},\partial_{z^j})=\sum_{k,l=1}^mg(\partial_{y^k},\partial_{y^l})\partial_{z^i}y^k\partial_{z^j}y^l.
\end{align*}
It is thus easy to see $g(\partial_{z^i},\partial_{z^j})\in C^{1,\alpha}(\bar{\hat{W}})\cap W^{2,q}(\hat{W})$.
\end{proof}

\begin{remark}\label{re}
If a one-form $\omega$ is of class $C^{1,1}$ in the original coordinate neighborhood, then a similar argument shows that $\omega$ is of class $C^{1,\alpha}\cap W^{2,q}$ in the same harmonic coordinates as in Corollary \ref{co} \cite[Corollary 1.4]{DeTKaz81}.
\end{remark}

\begin{proof}[Proof of Proposition \ref{reg}]
Given $x\in M$, for any $0<\alpha<1$ and $1<q<\infty$, Corollary \ref{co} asserts a local harmonic coordinate system $z=(z^1,\cdots,z^m):\hat{U}\rightarrow\hat{W}$ in a neighborhood $x\in\hat{U}$ such that $g_{ij}=g(\partial_{z^i},\partial_{z^j})\in C^{1,\alpha}(\bar{\hat{W}})\cap W^{2,q}(\hat{W})$. By Remark \ref{re}, $\omega_i=\omega(\partial_{z^i})\in C^{1,\alpha}(\bar{\hat{W}})\cap W^{2,q}(\hat{W})$ as well. We proceed to improve the smoothness of $(g,\omega)$ by turns.

Notice that in harmonic coordinates $(z^1,\cdots,z^m)$, the principal part of \eqref{fsf} is given by $-\frac{1}{2}\sum_{k,l=1}^mg^{kl}\partial^2_{z^kz^l}g_{ij}$ \cite[(4.3)]{DeTKaz81}, which indicates that \eqref{fsf} is an elliptic equation for the unknowns $g_{ij}$. Since $g_{ij}\in W^{2,q}(\hat{W})$, and the coefficients of \eqref{fsf} belong to $C^{0,\alpha}(\bar{\hat{W}})$, we apply \cite[Theorem 9.19]{GilTru01} to conclude that $g_{ij}\in C^{2,\alpha}(\bar{\hat{W}})$. In particular, $\mbox{Ric}(g)\in C^{0,\alpha}(\bar{\hat{W}})$. We turn now to \eqref{fss}, whose principal part is given by $-\sum_{j,k=1}^mg^{jk}\partial^2_{z^jz^k}\omega_i$. Hence \eqref{fss} is an elliptic equation for the unknowns $\omega_i$. Since $\omega_i\in W^{2,q}(\hat{W})$, and the coefficients of \eqref{fss} belong to $C^{0,\alpha}(\bar{\hat{W}})$, we apply again \cite[Theorem 9.19]{GilTru01} to conclude that $\omega_i\in C^{2,\alpha}(\bar{\hat{W}})$.

We have shown that $(g,\omega)$ is actually a $C^{2,\alpha}(\bar{\hat{W}})$ solution of system \eqref{fsf} and \eqref{fss}. As a coupled system for the unknowns $(g,\omega)$, the principal symbol of its linearization at a solution is given by $(h,\psi)\mapsto(\frac{1}{2}|\xi|_g^2h,|\xi|_g^2\psi+\Theta_{(g,\omega)}(\xi,h))$, where $(h,\psi)\in S^2T^\ast M\oplus T^\ast M$, $\xi$ is a cotangent vector, and $\Theta_{(g,\omega)}(\xi,h)$ is linear in $h$. It is clear that the principal symbol is an isomorphism for any nonzero $\xi$. Thus the system is a quasi-linear elliptic system, and we apply \cite[Theorem 6.7.6]{Mor66} to conclude that $(g,\omega)$ is indeed real analytic.
\end{proof}

\section{Constructions}\label{constr}

In this section, we will construct a special class of triples $(\bar{M}^m,g_{\bar{M}},\rho)$, $m\ge4$, where $\bar{M}$ is an $m$-dimensional smooth compact manifold with boundary, $g_{\bar{M}}$ is a Riemannian metric on $\bar{M}$, and $\rho$ is a defining function for boundary, such that \eqref{ce} holds true in the interior of $\bar{M}$ for some constants $(m\le)p\in\mathbb{Z}$ and $\epsilon\in\mathbb{R}$. As before, we introduce a function
\begin{align*}
v=(p-2)\log\rho,
\end{align*}
and then deal with \eqref{nnnce} instead. We first proceed to fix some notations.

\subsection{Basic setup}\label{sta}

Let $\{(V_i^{2n_i},J_i,h_i)\}_{1\le i\le r}$, $r\ge2$, be a finite set of connected Fano K\"ahler-Einstein manifolds with complex dimension $n_i\ge0$. It is well-known that $V_i$ is simply-connected. Therefore $H^2(V_i;\mathbb{Z})$ is torsion-free, and $H^2(V_1\times\cdots\times V_r;\mathbb{Z})\cong H^2(V_1;\mathbb{Z})\oplus\cdots\oplus H^2(V_r;\mathbb{Z})$. Assume the first Chern class $c_1(V_i)=p_ia_i$, where $p_i\in\mathbb{Z}_+$ is a positive integer, and $a_i\in H^2(V_i;\mathbb{Z})$ is an indivisible class. We shall henceforth normalize the K\"ahler-Einstein metric $h_i$ such that $\mbox{Ric}(h_i)=p_ih_i$. Let $\eta_i$ and $\varsigma_i$ be respectively the associated K\"{a}hler form and Ricci form. Then $p_i\eta_i=\varsigma_i=2\pi[c_1(V_i)]=2\pi p_i[a_i]$, i.e., $\eta_i=2\pi[a_i]$, where $[\cdot]$ denotes the corresponding de Rham cohomology class.

Given an ordered $r$-tuple $q=(q_1,\cdots,q_r)$, $q_i\in\mathbb{Z}\backslash\{0\}$, let $\hat{\pi}:P_q\rightarrow V_1\times\cdots\times V_r$ be the principal circle bundle with Euler class $\oplus_{i=1}^rq_ia_i$. Then there exists a principal connection $\theta$ on $P_q$, with curvature form $\Omega=\sum_{i=1}^rq_i\hat{\pi}^\ast\eta_i$. We shall think of the circle as the unitary group $U(1)=\{\exp(\sqrt{-1}\upsilon)|\upsilon\in\mathbb{R}/2\pi\mathbb{Z}\}$ with Lie algebra $\mathfrak{u}(1)=\sqrt{-1}\mathbb{R}$. By convention, $U(1)$ acts freely on $P_q$ from the right. This gives rise to a vertical vector field
\begin{align*}
\Lambda(x)=\frac{d}{d\upsilon}|_{\upsilon=0}(x\cdot\exp(\sqrt{-1}\upsilon)).
\end{align*}
The principal connection $\theta$ is such that $\theta(\Lambda)\equiv1$. Now the relation of the $(2,1)$ O'Neill tensor field $A$ (cf. \cite[9.20]{Bes87}) and the curvature form $\Omega$ becomes
\begin{align}\label{at}
A_YZ=-\frac{1}{2}\Omega(Y,Z)\Lambda,
\end{align}
where $Y$ and $Z$ are arbitrary horizontal vector fields (cf. \cite[9.65]{Bes87}).

We introduce on $P_q$ a one-parameter family of metrics
\begin{eqnarray*}
g_t=f(t)^2\theta\otimes\theta+\sum_{i=1}^rg_i(t)^2\hat{\pi}^\ast h_i,&&t\in I,
\end{eqnarray*}
where $I\subset\mathbb{R}$ is an open interval, $f$ and $g_i$ are positive smooth functions on $I$. For each $t\in I$, the bundle projection map $\hat{\pi}:(P_q,g_t)\rightarrow(V_1\times\cdots\times V_r,\sum_{i=1}^rg_i(t)^2h_i)$ is a Riemannian submersion with totally geodesic fibres, i.e., the $(2,1)$ O'Neill tensor field $T$ (cf. \cite[9.17]{Bes87}) vanishes identically. More importantly, the curvature form $\Omega$ is parallel with respect to any product metric on the base. Therefore the principal connection $\theta$ satisfies the Yang-Mills condition (cf. \cite[9.35 and 9.65]{Bes87}).

Let $\hat{M}^m$ be the product $I\times P_q$ of dimension $m=2+\sum_{i=1}^r2n_i$, which is an open and dense subset of our target manifold $\bar{M}$. Assume the metric $\hat{g}$ on $\hat{M}$ takes the form $dt^2+g_t$, and $v$ depends only on $t\in I$. Notice that the above assumptions are quite natural in view of the geodesic conformal compactifications \eqref{gc}. We need to do some computations in order to write down \eqref{nnnce} for the data $(\hat{M},\hat{g},v)$.

Let $\{\check{E}_{i1},\cdots,\check{E}_{i,2n_i}\}$ be an orthonormal adapted basis on $(V_i,J_i,h_i)$, i.e., $\check{E}_{i,n_i+j}=J_i\check{E}_{ij}$, $1\le j\le n_i$. Let $E_{ij}$ be the unique horizontal vector field on $P_q$ such that $\hat{\pi}_\ast(E_{ij})=\check{E}_{ij}$. Then $\{f(t)^{-1}\Lambda,g_i(t)^{-1}E_{ij}\}_{1\le i\le r,1\le j\le2n_i}$ forms an orthonormal basis on $(P_q,g_t)$. It is straightforward to show that \cite[9.36]{Bes87}
\begin{lemma}\label{nv}
The non-vanishing components of the Ricci tensor of $(P_q,g_t)$ are
\begin{eqnarray*}
\mbox{Ric}_t(\Lambda,\Lambda)=\sum_{i=1}^r\frac{n_iq_i^2}{2}\frac{f^4}{g_i^4},&&\mbox{Ric}_t(E_{ij},E_{ij})=p_i
-\frac{q_i^2}{2}\frac{f^2}{g_i^2}.
\end{eqnarray*}
\end{lemma}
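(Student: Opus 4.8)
The plan is to read off all components of $\mbox{Ric}_t$ from O'Neill's Ricci formulas for the Riemannian submersion $\hat{\pi}\colon(P_q,g_t)\to(V_1\times\cdots\times V_r,\sum_{i=1}^r g_i(t)^2 h_i)$, which was observed above to have totally geodesic fibres and to carry a Yang--Mills horizontal distribution. The specialization of \cite[9.36]{Bes87} to a submersion with totally geodesic fibres writes the Ricci tensor of the total space in terms of three ingredients: the Ricci tensor of the base, the Ricci tensor of the (one-dimensional, hence flat) circle fibre, and quadratic expressions in the O'Neill tensor $A$; the horizontal--vertical component is moreover built from the divergence of $A$.

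First I would record the base data. Since $\mbox{Ric}(g_i^2 h_i)=\mbox{Ric}(h_i)=p_i h_i$ by scale invariance and the normalization $\mbox{Ric}(h_i)=p_i h_i$, the Ricci tensor of the product base metric is the orthogonal sum of the $p_i h_i$; in particular it is diagonal in the adapted frame, and after rescaling from the unit vector $g_i^{-1}\check{E}_{ij}$ back to $\check{E}_{ij}$ it contributes precisely the summand $p_i$ to $\mbox{Ric}_t(E_{ij},E_{ij})$, while the flat fibre contributes nothing. Next I would compute the $A$-terms from \eqref{at} together with $\Omega=\sum_i q_i\hat{\pi}^\ast\eta_i$. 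The only input needed is that in the adapted basis the Kähler form $\eta_i$ acts like a standard symplectic form, so that $\sum_l \eta_i(\check{E}_{ij},\check{E}_{il})^2=1$ for each fixed pair $(i,j)$, and the analogous pairings across distinct factors $i\neq k$ or across distinct frame indices vanish. With the orthonormal frame $\{f^{-1}\Lambda\}\cup\{g_i^{-1}E_{ij}\}$ a short computation then gives $\sum_{i,j}|A_{g_i^{-1}E_{ij}}(f^{-1}\Lambda)|^2=\sum_i \frac{n_iq_i^2}{2}\frac{f^2}{g_i^4}$ and, for each fixed $(i,j)$, $\sum_{k,l}|A_{g_i^{-1}E_{ij}}(g_k^{-1}E_{kl})|^2=\frac{q_i^2 f^2}{4g_i^4}$. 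Substituting these into the O'Neill formulas and rescaling the unit vectors back to $\Lambda$ and $E_{ij}$ (whose $g_t$-norms are $f$ and $g_i$) produces $\mbox{Ric}_t(\Lambda,\Lambda)=\sum_i \frac{n_iq_i^2}{2}\frac{f^4}{g_i^4}$ and $\mbox{Ric}_t(E_{ij},E_{ij})=p_i-\frac{q_i^2}{2}\frac{f^2}{g_i^2}$.

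Finally I would check that the remaining components vanish. The off-diagonal horizontal--horizontal components vanish because the base Ricci tensor is diagonal in the adapted frame and the $A$-pairings $\langle A_{g_i^{-1}E_{ij}}(\cdot),A_{g_k^{-1}E_{kl}}(\cdot)\rangle$ are already zero unless $i=k$ and the two horizontal indices agree. The horizontal--vertical components $\mbox{Ric}_t(\Lambda,E_{ij})$ vanish because they are governed by the divergence of $A$, which is zero: the curvature form $\Omega=\sum_i q_i\hat{\pi}^\ast\eta_i$ is parallel with respect to the Levi-Civita connection of any product metric on the base (each $\eta_i$ being parallel), equivalently $\theta$ satisfies the Yang--Mills condition as recorded above. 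I expect the only genuine difficulty to be bookkeeping: pinning down the numerical constants in the O'Neill formulas and keeping careful track of which components are evaluated on unit vectors and which on the unnormalized frame vectors $\Lambda$ and $E_{ij}$; once \cite[9.36]{Bes87} and \eqref{at} are in hand, the stated formulas are forced.
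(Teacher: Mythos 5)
Your proposal is correct and follows the same route the paper intends: the paper's entire proof of this lemma is the citation of \cite[9.36]{Bes87}, and your computation of the $A$-tensor norms from $\Omega=\sum_i q_i\hat{\pi}^\ast\eta_i$, the rescaling between the unit frame and $\{\Lambda,E_{ij}\}$, and the use of the Yang--Mills condition to kill the mixed components all check out (in particular $(AU,AU)=\sum_i\frac{n_iq_i^2f^2}{2g_i^4}$ and $(A_{X_{ij}},A_{X_{ij}})=\frac{q_i^2f^2}{4g_i^4}$ are exactly right). No gaps.
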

Notice that $\{\partial_t,f(t)^{-1}\Lambda,g_i(t)^{-1}E_{ij}\}_{1\le i\le r,1\le j\le2n_i}$ forms an orthonormal basis on $(\hat{M},\hat{g})$. It is also straightforward to show that \cite[\S2]{EscWan00}
\begin{lemma}
The non-vanishing components of the Ricci tensor of $(\hat{M},\hat{g})$ are
\begin{eqnarray*}
&&\hat{\mbox{Ric}}(\partial_t,\partial_t)=-\frac{\ddot{f}}{f}-\sum_{i=1}^r2n_i\frac{\ddot{g_i}}{g_i},\\
&&\hat{\mbox{Ric}}(\Lambda,\Lambda)=-f\ddot{f}-\sum_{i=1}^r2n_if\dot{f}\frac{\dot{g_i}}{g_i}
+\sum_{i=1}^r\frac{n_iq_i^2}{2}\frac{f^4}{g_i^4},\\
&&\hat{\mbox{Ric}}(E_{ij},E_{ij})=-g_i\ddot{g_i}+\dot{g_i}^2-\frac{\dot{f}}{f}g_i\dot{g_i}
-\sum_{k=1}^r2n_kg_i\dot{g_i}\frac{\dot{g_k}}{g_k}+p_i-\frac{q_i^2}{2}\frac{f^2}{g_i^2},
\end{eqnarray*}
where $\dot{}$ denotes $\frac{d}{dt}$.
\end{lemma}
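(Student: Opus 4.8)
The metric $\hat g=dt^2+g_t$ on $\hat M=I\times P_q$ is of cohomogeneity-one type, i.e., a multiply warped product over the principal circle bundle $\hat\pi$, so the plan is to specialise the general formula for the Ricci tensor of such a metric -- as in Eschenburg--Wang \cite[\S2]{EscWan00}, or equivalently via the second fundamental form of the slices $M_t=\{t\}\times P_q$ -- and then feed in the fibre Ricci tensor $\mbox{Ric}_t$ already computed in Lemma \ref{nv}. Writing $S_t=\frac12 g_t^{-1}\dot g_t$ for the shape operator of $M_t$ with respect to the unit normal $\pa_t$, the identities to be used are $\hat{\mbox{Ric}}(\pa_t,\pa_t)=-\mbox{tr}_{g_t}(\dot S_t+S_t^2)$, $\hat{\mbox{Ric}}(\pa_t,X)=\big(d\,\mbox{tr}_{g_t}S_t-\delta^{g_t}S_t\big)(X)$, and, for $X,Y$ tangent to $M_t$, $\hat{\mbox{Ric}}(X,Y)=\mbox{Ric}_t(X,Y)-\frac12\ddot g_t(X,Y)-\frac14(\mbox{tr}_{g_t}\dot g_t)\,\dot g_t(X,Y)+\frac12(\dot g_t\,g_t^{-1}\dot g_t)(X,Y)$ (one checks the last one against the ordinary warped product formula).

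First I would record the elementary ingredients. Since $\theta$ and the $\hat\pi^\ast h_i$ do not depend on $t$, one has $\dot g_t=2f\dot f\,\theta\otimes\theta+\sum_i 2g_i\dot g_i\,\hat\pi^\ast h_i$ and $\ddot g_t=2(\dot f^2+f\ddot f)\,\theta\otimes\theta+\sum_i 2(\dot g_i^2+g_i\ddot g_i)\,\hat\pi^\ast h_i$; hence in the $g_t$-orthonormal frame $\{f^{-1}\Lambda,\,g_i^{-1}E_{ij}\}$ -- which is parallel along $\pa_t$, because $[\pa_t,\Lambda]=[\pa_t,E_{ij}]=0$ and $\nabla_{\pa_t}\Lambda=\nabla_\Lambda\pa_t=S_t\Lambda=\frac{\dot f}{f}\Lambda$ (likewise for $E_{ij}$) -- the operator $S_t$ is diagonal with entries $\dot f/f$ and $\dot g_i/g_i$, so $\dot S_t+S_t^2$ is diagonal with entries $\ddot f/f$ and $\ddot g_i/g_i$, and $\mbox{tr}_{g_t}\dot g_t=2\dot f/f+4\sum_i n_i\dot g_i/g_i$. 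Substituting into the normal--normal formula gives at once $\hat{\mbox{Ric}}(\pa_t,\pa_t)=-\ddot f/f-\sum_i 2n_i\ddot g_i/g_i$; substituting $X=Y=\Lambda$ and $X=Y=E_{ij}$ into the tangential formula and inserting the values $\mbox{Ric}_t(\Lambda,\Lambda)=\sum_i\frac{n_iq_i^2}{2}\frac{f^4}{g_i^4}$ and $\mbox{Ric}_t(E_{ij},E_{ij})=p_i-\frac{q_i^2}{2}\frac{f^2}{g_i^2}$ from Lemma \ref{nv} yields, after collecting terms, the remaining two identities.

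It then remains to see that all other components vanish. The tensors $g_t$, $\dot g_t$, $\ddot g_t$ are block-diagonal for the $g_t$-orthogonal splitting $\mathbb{R}\Lambda\oplus\bigoplus_i\hat\pi^\ast TV_i$, and $\mbox{Ric}_t$ is block-diagonal -- in fact proportional to the metric on each $V_i$-block -- by Lemma \ref{nv}; hence $\hat{\mbox{Ric}}(\Lambda,E_{ij})=0$ and $\hat{\mbox{Ric}}(E_{ij},E_{kl})=0$ whenever $(i,j)\ne(k,l)$. For the mixed normal--tangential components one uses that $\mbox{tr}_{g_t}S_t=\dot f/f+2\sum_i n_i\dot g_i/g_i$ is constant on each slice, so $d\,\mbox{tr}_{g_t}S_t$ vanishes there, together with $\delta^{g_t}S_t=0$; the latter is where the submersion geometry of $P_q$ genuinely enters, through the facts that the fibres of $\hat\pi$ are totally geodesic in $(P_q,g_t)$, that $\theta$ is Yang--Mills so that the only nonzero O'Neill tensor $A$ is vertical-valued via \eqref{at}, and that the base carries a product metric, after which a Koszul-formula computation gives $\hat{\mbox{Ric}}(\pa_t,X)=0$. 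I expect this last vanishing -- together with the correct bookkeeping of $\dot S_t$ in the moving frame for $\hat{\mbox{Ric}}(\pa_t,\pa_t)$ -- to be the only point requiring any care, since it relies on more than the warped-product structure over $I$; but both are contained in the computations of \cite[\S2]{EscWan00} and \cite[9.36]{Bes87}, so the rest is substitution.
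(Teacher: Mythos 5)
Your argument is correct and is essentially the paper's own: the paper proves this lemma simply by citing the cohomogeneity-one Ricci formulas of \cite[\S2]{EscWan00} together with Lemma \ref{nv}, and your computation via the shape operator $S_t=\frac12 g_t^{-1}\dot g_t$, the Riccati-type normal--normal formula, and the tangential formula $\hat{\mbox{Ric}}=\mbox{Ric}_t-\frac12\ddot g_t+\frac12\dot g_t g_t^{-1}\dot g_t-\frac14(\mbox{tr}_{g_t}\dot g_t)\dot g_t$ is exactly the content of that reference, with all three substitutions checking out. The one point you rightly flag as delicate, the vanishing of $\hat{\mbox{Ric}}(\partial_t,X)$ via $\delta^{g_t}S_t=0$, is indeed where the totally geodesic fibres and the Yang--Mills condition on $\theta$ enter, and is likewise covered by \cite[\S2]{EscWan00} and \cite[9.36]{Bes87}.
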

Furthermore, we have
\begin{lemma}\label{hess}
The non-vanishing components of the Hessian of $v$ with respect to $\hat{g}$ are
\begin{eqnarray*}
\hat{D}dv(\partial_t,\partial_t)=\ddot{v},&\hat{D}dv(\Lambda,\Lambda)=\dot{v}f\dot{f},&\hat{D}dv(E_{ij},E_{ij})=\dot{v}g_i\dot{g_i}.
\end{eqnarray*}
\end{lemma}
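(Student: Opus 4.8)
The plan is to compute the Hessian $\hat{D}dv$ directly from the Levi-Civita connection of the metric $\hat{g} = dt^2 + g_t$ on $\hat{M} = I\times P_q$, using that $v$ depends only on $t$. First I would note that $dv = \dot{v}\,dt$, so for any vector fields $X,Y$ on $\hat{M}$ we have $\hat{D}dv(X,Y) = X(Y(v)) - (\hat{D}_XY)(v) = X(Y(v)) - dt(\hat{D}_XY)\,\dot{v} = \ddot{v}\,dt(X)dt(Y) - \dot{v}\,dt(\hat{D}_XY)$. Hence the whole computation reduces to identifying the $\partial_t$-component of $\hat{D}_XY$ for $X,Y$ in the orthonormal frame $\{\partial_t, f^{-1}\Lambda, g_i^{-1}E_{ij}\}$, i.e.\ to reading off the second fundamental form of the hypersurfaces $\{t\}\times P_q$ in $(\hat{M},\hat{g})$.

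Next I would record that, since $\hat{g}$ is a warped-product-type metric with $\partial_t$ a unit geodesic field, $\hat{D}_{\partial_t}\partial_t = 0$, which gives $\hat{D}dv(\partial_t,\partial_t) = \ddot{v}$. For the horizontal/vertical directions I would use the standard formula for a metric of the form $dt^2 + g_t$: if $W$ is a vector field tangent to $P_q$ (extended to be $t$-independent in the obvious sense), then the $\partial_t$-component of $\hat{D}_WW$ is $-\tfrac12\partial_t\big(g_t(W,W)\big)$. Applying this with $W = \Lambda$ gives $g_t(\Lambda,\Lambda) = f^2$, so $\hat{D}dv(\Lambda,\Lambda) = -\dot{v}\cdot(-\tfrac12\partial_t(f^2)) = \dot{v}f\dot{f}$; applying it with $W = E_{ij}$ gives $g_t(E_{ij},E_{ij}) = g_i^2$ (using that $\{\check E_{ij}\}$ is $h_i$-orthonormal and $\hat\pi$ is a Riemannian submersion for each $g_t$), so $\hat{D}dv(E_{ij},E_{ij}) = \dot{v}g_i\dot{g_i}$. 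That these are the only nonzero components follows because the mixed terms $dt(\hat{D}_XY)$ vanish whenever $X$ or $Y$ equals $\partial_t$ and the other is horizontal or vertical (as $\hat{D}_{\partial_t}W$ and $\hat{D}_W\partial_t$ are tangent to $P_q$), and because for $X\ne Y$ among the $P_q$-directions the relevant symmetric bilinear form $\partial_t g_t$ is diagonal in this frame — $\Lambda$ is $g_t$-orthogonal to all $E_{ij}$, and distinct $E_{ij}, E_{kl}$ stay orthonormal as $t$ varies since each $g_i(t)^2 h_i$ scales a fixed metric.

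The only genuinely delicate point is justifying that $\partial_t g_t$ is diagonal in the chosen frame, i.e.\ that there is no cross term between different base directions or between a base direction and the fibre direction; this is exactly where the specific structure $g_t = f^2\,\theta\otimes\theta + \sum_i g_i^2\,\hat\pi^\ast h_i$ (with $f, g_i$ functions of $t$ only and $\theta, h_i$ $t$-independent) is used, together with the fact, noted in the paragraph preceding Lemma~\ref{nv}, that $\hat\pi:(P_q,g_t)\to(\prod V_i, \sum g_i^2 h_i)$ is a Riemannian submersion with the $E_{ij}$ horizontal and $\Lambda$ vertical. Once that is in place the result is immediate; this is the routine verification alluded to by the phrase ``It is straightforward to show'' after Lemma~\ref{hess}, and I would present it essentially in the compressed form above, citing \cite[\S2]{EscWan00} for the general warped-product connection formulas rather than reproducing them.
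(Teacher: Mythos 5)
Your computation is correct and is exactly the routine verification the paper omits: reducing $\hat{D}dv(X,Y)=\ddot v\,dt(X)dt(Y)-\dot v\,dt(\hat D_XY)$ to the second fundamental form of the slices $\{t\}\times P_q$, and reading off $dt(\hat D_WW)=-\tfrac12\partial_t\bigl(g_t(W,W)\bigr)$ in the frame $\{\partial_t,f^{-1}\Lambda,g_i^{-1}E_{ij}\}$, where $g_t$ is diagonal with $t$-independent off-diagonal zeros. The signs and the vanishing of the mixed components all check out, so the proposal is a complete proof by the intended method.
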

It follows that \eqref{nnnce} for the triple $(\hat{M},\hat{g},v)$ is equivalent to the following system of ODEs
\begin{eqnarray*}
\ddot{v}+\frac{\dot{v}^2}{p-2}-\frac{\ddot{f}}{f}-\sum_{i=1}^r2n_i\frac{\ddot{g_i}}{g_i}=\epsilon,&\\
\dot{v}\frac{\dot{f}}{f}-\frac{\ddot{f}}{f}-\sum_{i=1}^r2n_i\frac{\dot{f}\dot{g_i}}{fg_i}
+\sum_{i=1}^r\frac{n_iq_i^2}{2}\frac{f^2}{g_i^4}=\epsilon,&\\
\dot{v}\frac{\dot{g_i}}{g_i}-\frac{\ddot{g_i}}{g_i}+(\frac{\dot{g_i}}{g_i})^2-\frac{\dot{f}\dot{g_i}}{fg_i}
-\sum_{j=1}^r2n_j\frac{\dot{g_i}\dot{g_j}}{g_ig_j}+\frac{p_i}{g_i^2}-\frac{q_i^2}{2}\frac{f^2}{g_i^4}=\epsilon,
&1\le i\le r.
\end{eqnarray*}

\subsection{Exact solutions}\label{es}

Corresponding to the $U(1)$-action on $\hat{M}$, we introduce a moment map coordinate $s$ through $ds=f(t)dt$, and define $\alpha(s)=f(t)^2$, $\beta_i(s)=g_i(t)^2$ and $\varphi(s)=v(t)$. In addition, we let
\begin{align}
\label{w}w=\prod_{i=1}^rg_i^{2n_i}=\prod_{i=1}^r\beta_i^{n_i}.
\end{align}
We can then rewrite the above system of ODEs as follows
\begin{eqnarray}
\label{fi}\alpha(\varphi''+\frac{\varphi'^2}{p-2}
-\sum_{i=1}^rn_i(\frac{\beta_i''}{\beta_i}-\frac{1}{2}(\frac{\beta_i'}{\beta_i})^2))
+\frac{\alpha'}{2}(\varphi'-(\log w)')-\frac{\alpha''}{2}=\epsilon,&\\
\label{se}\frac{\alpha'}{2}(\varphi'-(\log w)')-\frac{\alpha''}{2}+\frac{\alpha}{2}\sum_{i=1}^r\frac{n_iq_i^2}{\beta_i^2}=\epsilon,&\\
\label{th}\frac{\alpha}{2}(\frac{\beta_i'}{\beta_i}(\varphi'-(\log w)')
-\frac{\beta_i''}{\beta_i}+(\frac{\beta_i'}{\beta_i})^2-\frac{q_i^2}{\beta_i^2})-\frac{\alpha'}{2}\frac{\beta_i'}{\beta_i}
+\frac{p_i}{\beta_i}=\epsilon,&1\le i\le r,
\end{eqnarray}
where $'$ denotes $\frac{d}{ds}$. Equating \eqref{fi} with \eqref{se} leads to
\begin{align}
\label{fise}
\varphi''+\frac{\varphi'^2}{p-2}=\sum_{i=1}^rn_i(\frac{\beta_i''}{\beta_i}-\frac{1}{2}(\frac{\beta_i'}{\beta_i})^2
+\frac{q_i^2}{2\beta_i^2}).
\end{align}
In what follows, we will look for exact solutions of system \eqref{fi}-\eqref{th} such that
\begin{eqnarray}
\label{set}
\frac{\beta_i''}{\beta_i}-\frac{1}{2}(\frac{\beta_i'}{\beta_i})^2
+\frac{q_i^2}{2\beta_i^2}=0,&&1\le i\le r.
\end{eqnarray}
\begin{remark}
There is a geometric interpretation of \eqref{set} (cf. \cite[Corollary 7.5]{WanWan98}). One can define a complex structure $\hat{J}$ on $\hat{M}$ by lifting the product
complex structure of the base to the horizontal spaces of $\theta$ and setting
$\hat{J}(\partial_t)=-f^{-1}\Lambda$ on fibers. Then $\hat{g}$ is Hermitian
with respect to $\hat{J}$, and \eqref{set} is equivalent to the fact that the Riemann curvature tensor of $\hat{g}$ is fully invariant under the action of $\hat{J}$.
\end{remark}
Under the assumption \eqref{set}, \eqref{fise} becomes
\begin{align*}
\varphi''+\frac{\varphi'^2}{p-2}=0.
\end{align*}
This Bernoulli equation has a nontrivial solution
\begin{eqnarray}
\label{var}\varphi=(p-2)(\log(s+\kappa_0)+\log\kappa_1),&&s>-\kappa_0,
\end{eqnarray}
where $\kappa_0$ and $\kappa_1(>0)$ are constants of integration. Therefore,
\begin{eqnarray*}
\rho=\exp(\frac{v}{p-2})=\exp(\frac{\varphi}{p-2})=\kappa_1(s+\kappa_0)>0,&&\mbox{for }s>-\kappa_0.
\end{eqnarray*}

Eq. \eqref{set} itself has two types of solutions: the linear solutions $\beta_i=\pm q_i(s+\varpi_i)$ and the quadratic solutions
\begin{align}\label{beta}
\beta_i=A_i(s+s_i)^2-\frac{q_i^2}{4A_i},
\end{align}
where $\varpi_i$, $A_i(\ne0)$ and $s_i$ are constants of integration.
\begin{claim}
System \eqref{fi}-\eqref{th} admits no solutions with linear $\beta_i$'s.
\end{claim}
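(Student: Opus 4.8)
The plan is to substitute the two families of exact solutions — the linear $\beta_i = \pm q_i(s+\varpi_i)$ together with the already-determined $\varphi$ from \eqref{var} — into the remaining unused equations, namely \eqref{se} and \eqref{th}, and derive a contradiction. Since \eqref{set} holds by assumption and \eqref{fise} has been forced to $\varphi''+\varphi'^2/(p-2)=0$ with solution \eqref{var}, the content left to exploit lives in \eqref{se} and \eqref{th}. I would first suppose, for simplicity of exposition, that \emph{all} $\beta_i$ are linear; the mixed case (some linear, some quadratic) is handled by the same computation restricted to the linear indices, because each equation in \eqref{th} is ``local'' in $i$ apart from the common factor $(\log w)' = \sum_j n_j \beta_j'/\beta_j$ and the common term $\alpha$.

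The key steps, in order: (i) Record the elementary consequences of $\beta_i = \pm q_i(s+\varpi_i)$: $\beta_i'/\beta_i = 1/(s+\varpi_i)$, $(\beta_i'/\beta_i)^2 = 1/(s+\varpi_i)^2$, $\beta_i''/\beta_i = 0$, and $q_i^2/\beta_i^2 = 1/(s+\varpi_i)^2$ — note the striking cancellation $-\beta_i''/\beta_i + (\beta_i'/\beta_i)^2 - q_i^2/\beta_i^2 = 0$, so the entire parenthesis multiplying $\alpha/2$ in \eqref{th} collapses to $\frac{\beta_i'}{\beta_i}(\varphi'-(\log w)')$. (ii) Also $\varphi' = (p-2)/(s+\kappa_0)$ from \eqref{var}. (iii) Plug these into \eqref{th} to get, for each linear index $i$, a relation of the form $\frac{\alpha}{2}\cdot\frac{1}{s+\varpi_i}\big(\varphi'-(\log w)'\big) - \frac{\alpha'}{2}\cdot\frac{1}{s+\varpi_i} + \frac{p_i}{\beta_i} = \epsilon$; multiply through by $(s+\varpi_i)$ to isolate $\frac{\alpha}{2}(\varphi'-(\log w)') - \frac{\alpha'}{2} = \epsilon(s+\varpi_i) \mp p_i$, since $p_i/\beta_i = \pm p_i/(q_i(s+\varpi_i))$... more carefully, $\frac{p_i}{\beta_i}(s+\varpi_i) = \pm\frac{p_i}{q_i}$, a constant. (iv) Compare with \eqref{se}, which reads $\frac{\alpha'}{2}(\varphi'-(\log w)') - \frac{\alpha''}{2} + \frac{\alpha}{2}\sum_j n_j q_j^2/\beta_j^2 = \epsilon$; the left side of step (iii), call it $F(s) := \frac{\alpha}{2}(\varphi'-(\log w)') - \frac{\alpha'}{2}$, must by step (iii) equal $\epsilon(s+\varpi_i) \mp \frac{p_i}{q_i}$ for \emph{every} linear index $i$ simultaneously — and also the same $F(s)$ for different $i$. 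If there are at least two linear indices, matching the two affine-in-$s$ expressions forces all the $\varpi_i$ (over linear indices) to coincide and the constants $\mp p_i/q_i$ to agree; even with one linear index, substituting $F(s) = \epsilon(s+\varpi_i)\mp p_i/q_i$ back into \eqref{se} and into \eqref{fi} (via \eqref{se}) over-determines $\alpha$.

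(v) The decisive contradiction: differentiate $F(s) = \epsilon(s+\varpi_i) \mp p_i/q_i$, so $F'(s) = \epsilon$, a constant; but expanding $F'(s) = \frac{\alpha'}{2}(\varphi'-(\log w)') + \frac{\alpha}{2}(\varphi'-(\log w)')' - \frac{\alpha''}{2}$ and comparing with \eqref{se} shows $\frac{\alpha}{2}(\varphi'-(\log w)')' + \frac{\alpha}{2}\sum_j n_j q_j^2/\beta_j^2 = 0$ after cancellation, i.e. $(\varphi'-(\log w)')' + \sum_j n_j q_j^2/\beta_j^2 = 0$ since $\alpha > 0$. Now $(\log w)' = \sum_j n_j \beta_j'/\beta_j$, so $(\varphi'-(\log w)')' = \varphi'' - \sum_j n_j(\beta_j''/\beta_j - (\beta_j'/\beta_j)^2)$; using \eqref{set} this is $\varphi'' - \sum_j n_j(\frac12(\beta_j'/\beta_j)^2 - \frac{q_j^2}{2\beta_j^2} + (\beta_j'/\beta_j)^2)$, wait — more transparently, combine with \eqref{fise} itself, which already reorganizes $\varphi''+\varphi'^2/(p-2)$ in terms of the $\beta_j$, to see that the required identity would force $\varphi'^2/(p-2)$ to equal a specific combination of $1/(s+\varpi_j)^2$ terms that is incompatible with $\varphi' = (p-2)/(s+\kappa_0)$ unless all $\varpi_j = \kappa_0$ and the $n_j q_j^2$ sum to a forbidden value; tracking the $1/(s+\kappa_0)^2$ coefficient then gives $(p-2) = -\sum_j n_j q_j^2 < 0$, absurd since $p > 2$ and $n_j \geq 0$, $q_j \neq 0$ with $r \geq 2$ guaranteeing at least... hmm, actually $n_1 \geq 0$ could be $0$; but one needs $\sum n_j q_j^2 \ge 1$, which holds whenever some $n_j \ge 1$, and if all $n_j = 0$ the manifold is low-dimensional — I would handle that degenerate subcase separately, noting $m = 2$ then, excluded by $m \ge 4$.

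\textbf{Main obstacle.} The genuine difficulty is the bookkeeping in step (v): isolating exactly which combination of the ODEs \eqref{fi}--\eqref{th} yields a \emph{clean} contradiction without circular reliance on \eqref{fise}. The cleanest route is probably to use \eqref{se} and \eqref{th} as the two independent inputs (since \eqref{fi} is equivalent to \eqref{se} modulo \eqref{fise}), derive $F(s)$ affine in $s$ from \eqref{th}, then feed $F'(s) = \epsilon$ back and extract a second-order identity forcing the impossible sign relation $p - 2 = -\sum_{i:\,\beta_i\ \mathrm{linear}} n_i q_i^2$ on the subspace of linear indices. The subtle point to get right is that $\varphi$ is \emph{globally} fixed (not per-index), so the $1/(s+\kappa_0)^2$ singularity from $\varphi'^2$ cannot be cancelled by the $1/(s+\varpi_i)^2$ terms unless the centers align, after which the leading-coefficient comparison closes the argument; I expect this to be a short but careful computation rather than a conceptual hurdle.
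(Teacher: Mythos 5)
Your strategy is exactly the paper's: read off the affine relation $F(s)=\frac{\alpha}{2}(\varphi'-(\log w)')-\frac{\alpha'}{2}=\epsilon(s+\varpi_i)-p_i/\iota_i$ from \eqref{th} for a linear index, differentiate it, and compare with \eqref{se}. However, the decisive step (v) — which you yourself flag as the main obstacle — contains a sign error that prevents the argument from closing as written. Subtracting \eqref{se} from $F'(s)=\epsilon$ cancels $\frac{\alpha'}{2}(\varphi'-(\log w)')$ and $-\frac{\alpha''}{2}$ and leaves
\begin{equation*}
\frac{\alpha}{2}\Bigl((\varphi'-(\log w)')'-\sum_j\frac{n_jq_j^2}{\beta_j^2}\Bigr)=0,
\qquad\text{i.e.}\qquad (\varphi'-(\log w)')'=\sum_j\frac{n_jq_j^2}{\beta_j^2},
\end{equation*}
with a \emph{minus} sign, not the plus sign you wrote. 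Since for linear $\beta_j$ one has $(\log\beta_j)''=-(\beta_j'/\beta_j)^2=-q_j^2/\beta_j^2$, the sums cancel identically and the relation collapses to $\varphi''=0$, which contradicts $\varphi''=-(p-2)/(s+\kappa_0)^2\ne0$ from \eqref{var}. That is the paper's (one-line) contradiction. With your sign, the identity becomes $\varphi''+2\sum_j n_jq_j^2/\beta_j^2=0$, i.e.\ (after aligning centers) $p-2=2\sum_j n_jq_j^2$ — both sides positive, hence not automatically absurd (e.g.\ it is satisfiable when $n=0$ and all $|q_j|=1$); and your stated conclusion $p-2=-\sum_j n_jq_j^2<0$ does not follow from either sign convention. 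So the proof as written has a genuine computational gap, though it is repaired by a one-line sign correction, after which no coefficient-matching of $1/(s+\kappa_0)^2$ against $1/(s+\varpi_j)^2$ is needed at all.

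Two smaller points. First, your reduction of the mixed case to ``the same computation restricted to the linear indices'' is not right as stated: both $(\log w)''$ and $\sum_j n_jq_j^2/\beta_j^2$ run over \emph{all} indices, and the quadratic indices leave a residue $\sum_{\mathrm{quad}}2n_jA_j/\beta_j$ in the final identity, so an extra argument is required there. (The paper's own proof substitutes all $\beta_i$ linear and does not treat the mixed case either, so this goes beyond what is being asked, but the claim as you phrased it is unjustified.) Second, your observation that \eqref{fi} is redundant modulo \eqref{se} and \eqref{fise} is correct and matches the paper's implicit use of only \eqref{se} and \eqref{th}.
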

\begin{proof}
We substitute $\beta_i=\iota_i(s+\varpi_i)$, $\iota_i=\pm q_i$, into \eqref{se} and \eqref{th} to get
\begin{eqnarray}
&&\label{nfi}\alpha''=\alpha'(\varphi'-(\log w)')+\alpha\sum_{i=1}^r\frac{n_iq_i^2}{\beta_i^2}-2\epsilon,\\
&&\label{nse}\alpha'=\alpha(\varphi'-(\log w)')+\frac{2p_i}{\iota_i}-2\epsilon(s+\varpi_i),\;\;1\le i\le r.
\end{eqnarray}
Differentiating \eqref{nse} with respect to $s$ gives
\begin{align}
\label{dnse}\alpha''=\alpha'(\varphi'-(\log w)')+\alpha(\varphi''-(\log w)'')-2\epsilon.
\end{align}
Equating \eqref{nfi} with \eqref{dnse}, and noticing $(\log w)''=-\sum_{i=1}^r\frac{n_iq_i^2}{\beta_i^2}$, we end up with $\varphi''=0$. This gives a contradiction.
\end{proof}
Hence from now on, we shall focus on the quadratic case. In order to solve for $\alpha$, we substitute the expression of $\beta_i$ given by \eqref{beta} into \eqref{th} to get
\begin{eqnarray}
\label{qth}\alpha'=\alpha(\varphi'-(\log w)'+\frac{1}{s+s_i})-\epsilon(s+s_i)+\frac{1}{s+s_i}(\frac{\epsilon q_i^2}{4A_i^2}+\frac{p_i}{A_i}),&&1\le i\le r.
\end{eqnarray}
This gives rise to two consistency conditions, i.e.,
\begin{eqnarray*}
s_i\equiv s_0\mbox{ and }\frac{\epsilon q_i^2}{4A_i^2}+\frac{p_i}{A_i}\equiv\nu,&&1\le i\le r
\end{eqnarray*}
for some constants $s_0$ and $\nu$. In this case, \eqref{qth} becomes
\begin{align}
\label{cqth}\alpha'=\alpha(\varphi'-(\log w)'+\frac{1}{s+s_0})-\epsilon(s+s_0)+\frac{\nu}{s+s_0}.
\end{align}
Differentiating \eqref{cqth} with respect to $s$ gives
\begin{align*}
\alpha''=\alpha'(\varphi'-(\log w)'+\frac{1}{s+s_0})+\alpha(\varphi''-(\log w)''-\frac{1}{(s+s_0)^2})-\epsilon-\frac{\nu}{(s+s_0)^2}.
\end{align*}
Substituting this equation into \eqref{se} gives
\begin{align}
\label{sdc}\frac{\alpha'}{s+s_0}+\alpha(\varphi''-(\log w)''-\frac{1}{(s+s_0)^2}-\sum_{i=1}^r\frac{n_iq_i^2}{\beta_i^2})+\epsilon-\frac{\nu}{(s+s_0)^2}=0.
\end{align}
Finally, we substitute \eqref{cqth} into \eqref{sdc} to get
\begin{align*}
\frac{\varphi'}{s+s_0}-\frac{(\log w)'}{s+s_0}+\varphi''-(\log w)''-\sum_{i=1}^r\frac{n_iq_i^2}{\beta_i^2}=0,
\end{align*}
which simplifies to
\begin{align*}
\frac{(p-2)(\kappa_0-s_0)}{(s+s_0)(s+\kappa_0)^2}=0
\end{align*}
once we take into consideration \eqref{w}, \eqref{var}, \eqref{beta} and the consistency condition $s_i\equiv s_0$. It follows that $s_0=\kappa_0$. Now the consistency conditions for system \eqref{fi}-\eqref{th} become
\begin{eqnarray}
\label{csc}
s_i\equiv\kappa_0\mbox{ and }\frac{\epsilon q_i^2}{4A_i^2}+\frac{p_i}{A_i}\equiv\nu,&&1\le i\le r.
\end{eqnarray}

Under these consistency conditions, system \eqref{fi}-\eqref{th} reduces to a linear equation
\begin{align}
\label{sie}\alpha'=\alpha(\frac{p-1}{s+\kappa_0}-(\log w)')-\epsilon(s+\kappa_0)+\frac{\nu}{s+\kappa_0},
\end{align}
whose solution is given by
\begin{align}
\label{alp}\alpha=\frac{(s+\kappa_0)^{p-1}}{w(s)}\int^s_{s_\ast}\frac{w(\tau)(\nu-\epsilon(\tau
+\kappa_0)^2)}{(\tau+\kappa_0)^p}d\tau,
\end{align}
where $s_\ast$ denotes the right end-point of the interval $s\in I$.

To sum up, we have proved
\begin{proposition}\label{loc}
Formulas \eqref{alp}, \eqref{beta}, \eqref{var}, \eqref{w} together with consistency condition \eqref{csc} give us an exact solution of system \eqref{fi}-\eqref{th} on the interval $I=(-\kappa_0,s_\ast)$, with arbitrary constants $\kappa_0$, $\kappa_1(>0)$, $A_i(\ne0)$, $\nu$ and $s_\ast(>-\kappa_0)$. Furthermore, each solution corresponds to a local Riemannian metric
\begin{align*}
\hat{g}=dt^2+f(t)^2\theta\otimes\theta+\sum_{i=1}^rg_i(t)^2\hat{\pi}^\ast h_i=\alpha(s)^{-1}ds^2+\alpha(s)\theta\otimes\theta+\sum_{i=1}^r\beta_i(s)\hat{\pi}^\ast h_i
\end{align*}
on $\hat{M}=(-\kappa_0,s_\ast)\times P_q$, where $P_q$ is the principal circle bundle with principal connection $\theta$.
\end{proposition}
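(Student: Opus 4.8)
The strategy is to verify directly that the functions produced in Subsection \ref{es} satisfy the three ODEs \eqref{fi}, \eqref{se}, \eqref{th}, and then to translate the solution back from the moment-map coordinate $s$ to the original coordinate $t$. First I would recall the standing assumptions accumulated along the way: the $\beta_i$ are chosen quadratic of the form \eqref{beta} with common vertex $s_i\equiv\kappa_0$ (forced by the computation leading to $s_0=\kappa_0$), $\varphi$ is given by \eqref{var}, $w$ by \eqref{w}, and the pairs satisfy the consistency condition \eqref{csc}. Under \eqref{set}, equation \eqref{fise} collapses to $\varphi''+\varphi'^2/(p-2)=0$, which \eqref{var} solves; so \eqref{fi} and \eqref{se} become equivalent, and it suffices to check \eqref{se} and \eqref{th}. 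The key algebraic reduction already carried out in the text shows that, given \eqref{set} and \eqref{csc}, both \eqref{se} and \eqref{th} are consequences of the single linear first-order ODE \eqref{sie} for $\alpha$; conversely \eqref{sie} was obtained precisely by back-substituting, so one should close the loop by showing that \eqref{sie} together with \eqref{beta}, \eqref{var}, \eqref{w}, \eqref{csc} implies \eqref{th} for every $i$ and \eqref{se}. This is the one genuine computation: plug the expression $\varphi'=(p-2)/(s+\kappa_0)$, $(\log w)'=\sum_i n_i\beta_i'/\beta_i$, and $\alpha'$ from \eqref{sie} into \eqref{th}, use \eqref{set} to eliminate $\beta_i''$, and use the consistency relation $\epsilon q_i^2/(4A_i^2)+p_i/A_i=\nu$ to see that the $i$-dependent terms assemble into the $s$-independent constant $\nu$; the analogous substitution into \eqref{se} uses $(\log w)''=-\sum_i n_iq_i^2/\beta_i^2$, which itself follows from \eqref{set}.

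Next I would verify that \eqref{alp} is indeed the general solution of the linear ODE \eqref{sie}. Writing \eqref{sie} as $\alpha'+\big((\log w)'-(p-1)/(s+\kappa_0)\big)\alpha=\nu/(s+\kappa_0)-\epsilon(s+\kappa_0)$, the integrating factor is $w(s)(s+\kappa_0)^{1-p}$, so
\begin{align*}
\big(w(s)(s+\kappa_0)^{1-p}\alpha\big)'=w(s)(s+\kappa_0)^{1-p}\Big(\frac{\nu}{s+\kappa_0}-\epsilon(s+\kappa_0)\Big)=\frac{w(\tau)(\nu-\epsilon(s+\kappa_0)^2)}{(s+\kappa_0)^p}\Big|_{\tau=s},
\end{align*}
and integrating from $s_\ast$ to $s$ gives exactly \eqref{alp} (the choice of lower limit $s_\ast$ absorbs the integration constant). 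Since $w>0$ and $s+\kappa_0>0$ on $I=(-\kappa_0,s_\ast)$, the formula makes sense; positivity of $\alpha$ on $I$ is not asserted in the proposition (only that it solves the system), so it need not be addressed here — it will be arranged later by choosing the parameters.

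Finally I would assemble the metric. From $ds=f(t)\,dt$ and the definitions $\alpha(s)=f(t)^2$, $\beta_i(s)=g_i(t)^2$, $\varphi(s)=v(t)$, one has $dt^2=f(t)^{-2}ds^2=\alpha(s)^{-1}ds^2$ and $f(t)^2\theta\otimes\theta=\alpha(s)\theta\otimes\theta$, $g_i(t)^2\hat\pi^\ast h_i=\beta_i(s)\hat\pi^\ast h_i$, which is the displayed form of $\hat g$; that $\hat g$ together with $v$ solves \eqref{nnnce} on $\hat M$ is exactly the content of the system \eqref{fi}--\eqref{th} via the three computational lemmas (Lemmas \ref{nv}, \ref{hess} and the Ricci lemma preceding Lemma \ref{hess}), which reduced \eqref{nnnce} to that ODE system in the first place. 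The free parameters $\kappa_0,\kappa_1,A_i,\nu,s_\ast$ are unconstrained apart from $\kappa_1>0$, $A_i\neq0$, $s_\ast>-\kappa_0$ and the $r-1$ relations packaged in \eqref{csc} (which, given $\epsilon$, $p_i$, $q_i$, fix $\nu$ once one $A_i$ is chosen and then determine the remaining $A_i$). The main obstacle is purely bookkeeping: organizing the substitution so that the manifestly $s$-dependent rational expressions in \eqref{th} visibly cancel down to the constant $\epsilon$, keeping careful track of the sign conventions for $\Omega$, $A$, and the normalization $\mathrm{Ric}(h_i)=p_ih_i$ that enter Lemma \ref{nv}.
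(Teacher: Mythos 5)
Your overall strategy is sound and is essentially the paper's own argument read in the verification direction: the text of \S\ref{es} derives \eqref{var}, \eqref{beta}, \eqref{csc} and \eqref{sie} by a chain of substitutions each of which is reversible under the standing ansatz \eqref{set}, and your integrating-factor check that \eqref{alp} solves \eqref{sie}, as well as the $t\leftrightarrow s$ translation at the end, are correct. However, there is a concrete error in what you call ``the one genuine computation.'' You assert that the verification of \eqref{se} uses the identity $(\log w)''=-\sum_i n_iq_i^2/\beta_i^2$ and that this identity ``follows from \eqref{set}.'' It does not. From \eqref{set} one gets
\begin{align*}
(\log\beta_i)''=\frac{\beta_i''}{\beta_i}-\Bigl(\frac{\beta_i'}{\beta_i}\Bigr)^2
=-\frac{1}{2}\Bigl(\frac{\beta_i'}{\beta_i}\Bigr)^2-\frac{q_i^2}{2\beta_i^2},
\end{align*}
which equals $-q_i^2/\beta_i^2$ only when $(\beta_i')^2\equiv q_i^2$, i.e.\ precisely for the \emph{linear} solutions of \eqref{set} --- the case that the Claim in \S\ref{es} rules out. (Indeed, the paper invokes $(\log w)''=-\sum n_iq_i^2/\beta_i^2$ only inside the proof of that Claim.) For the quadratic solutions \eqref{beta} with common vertex $s_i\equiv\kappa_0$ one has instead $\beta_i\beta_i''-(\beta_i')^2=-2A_i^2(s+\kappa_0)^2-q_i^2/2$, and the cancellation that actually makes \eqref{se} close is the combined identity
\begin{align*}
\frac{(\log w)'}{s+\kappa_0}+(\log w)''+\sum_{i=1}^r\frac{n_iq_i^2}{\beta_i^2}=0,
\end{align*}
which follows from $4A_i^2(s+\kappa_0)^2-q_i^2=4A_i\beta_i$, used together with $\frac{\varphi'}{s+\kappa_0}+\varphi''=0$ (this last step is exactly where the constraint $s_0=\kappa_0$ enters). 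If you ran your verification with the identity as you stated it, the $i$-dependent terms in \eqref{se} would not cancel and the check would appear to fail.

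A further, smaller caveat: the route you describe for \eqref{se} (``plug $\alpha'$ from \eqref{sie} into \eqref{se}'') is not quite enough, because \eqref{se} contains $\alpha''$; one must first differentiate \eqref{sie} to express $\alpha''$ in terms of $\alpha$, $\alpha'$ and the known functions, and only then substitute --- this is the passage from \eqref{cqth} to \eqref{sdc} in the paper. With that correction and the identity above, your plan does reproduce the paper's argument. (Your parameter count is also slightly off --- \eqref{csc} imposes $r$ relations on the $r+1$ quantities $\nu,A_1,\dots,A_r$, leaving one of them free --- but your parenthetical description of how $\nu$ determines the $A_i$ is the right picture.)
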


\subsection{Compactifications}\label{com}

In order to extend the open Riemannian manifold $(\hat{M},\hat{g})$ in Proposition \ref{loc} smoothly to a compact manifold with boundary, we need to compactify one of its ends. To achieve this goal, let $V_1=\mathbb{C}P^{n_1}$, $n_1\ge0$, and we shall add to $\hat{M}$ a copy of $V_2\times\cdots\times V_r$ at $s=s_\ast$, called the event horizon, which is a deformation retraction of $\hat{M}$. This procedure corresponds to blowing down the standard Hopf fibration $S^{2n_1+1}\rightarrow\mathbb{C}P^{n_1}$ to a point at $s=s_\ast$. Therefore we need $|q_1|=1$. The corresponding smoothness conditions are (cf. \cite[p. 16]{DanWan08}): near $t=t_\ast=t(s_\ast)$, $f$ is smooth and odd in $t$ with $\dot{f}(t_\ast)=-1$; $g_1$ is smooth and odd in $t$ with $\dot{g_1}(t_\ast)^2=\frac{1}{2}$; $g_i$ is smooth and even in $t$ with $g_i(t_\ast)>0$, $2\le i\le r$. Since $\frac{d}{dt}=\sqrt{\alpha}\frac{d}{ds}$, $f=\sqrt{\alpha}$, $g_i=\sqrt{\beta_i}$ and $v=\varphi$, it follows that if near $s=s_\ast$, $\alpha$ is a $C^2$ function in $s$ with $\alpha(s_\ast)=0$ and $\alpha'(s_\ast)=-2$; $\beta_1$ is a $C^2$ function in $s$ with $\beta_1(s_\ast)=0$ and $\beta_1'(s_\ast)=-1$; $\beta_i$ is a $C^2$ function in $s$ with $\beta_i(s_\ast)>0$, $2\le i\le r$; $\varphi$ is a $C^3$ function in $s$, then the above smoothness conditions hold up to order $2$. By Proposition \ref{reg}, $(\hat{M},\hat{g},v)$ thus extends smoothly to $s=s_\ast$.

We proceed to simplify our formulas. The constant $\kappa_0$ in Proposition \ref{loc} represents the translation invariance in $s$ of the defining function $\rho$. Without loss of generality, we may assume $\kappa_0=0$, i.e., $\rho=\kappa_1s$, and hence $\rho(0)=0$. We may also assume that $s=0$ corresponds to $t=0$, and therefore the interval $I$ takes the form $(0,t_\ast)$.

Now \eqref{alp}, \eqref{beta} and \eqref{var} become
\begin{eqnarray}
&&\label{cal}\alpha(s)=\frac{s^{p-1}}{w(s)}\int_s^{s_\ast}\frac{w(\tau)(\epsilon\tau^2-\nu)}{\tau^p}d\tau,\\
&&\label{cbe}\beta_i(s)=A_i(s^2-\frac{q_i^2}{4A_i^2}),\\
&&\label{cva}\varphi(s)=(p-2)(\log s+\log\kappa_1).
\end{eqnarray}
It follows from \eqref{cbe} and \eqref{cva} that $\beta_i$ and $\varphi$ are analytic functions of $s\in(0,s_\ast)$.

\begin{proposition}\label{ral}
$\alpha$ is a rational function of $s\in(0,s_\ast)$.
\end{proposition}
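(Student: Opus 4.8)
The plan is to show that the integral in \eqref{cal} evaluates to a polynomial in $s$, so that $\alpha$ becomes a quotient of polynomials. The key observation is that $w(s)=\prod_{i=1}^r\beta_i(s)^{n_i}$ is itself a polynomial in $s$ by \eqref{cbe} (each $\beta_i$ being a quadratic in $s$), and hence so is $w(\tau)(\epsilon\tau^2-\nu)$. Therefore the integrand $w(\tau)(\epsilon\tau^2-\nu)/\tau^p$ is a finite linear combination of powers $\tau^k$ with $k$ ranging over integers (possibly negative), i.e., a Laurent polynomial in $\tau$. Antidifferentiating term by term produces another Laurent polynomial in $\tau$, except for the single term proportional to $\tau^{-1}$, whose antiderivative is $\log\tau$; one must check that this logarithmic term is absent.

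First I would write $w(\tau)=\prod_i A_i^{n_i}\prod_i(\tau^2-q_i^2/(4A_i^2))^{n_i}$ and expand to see that $w$ is an even polynomial in $\tau$ of degree $2\sum_i n_i = m-2 = p - n - 2 \le p-2$. Then $w(\tau)(\epsilon\tau^2-\nu)$ is a polynomial of degree at most $p$, so dividing by $\tau^p$ gives terms $c_k\tau^{k}$ with $k \ge -p$ and $k \le 0$; in particular the only potentially problematic term is $c_{-1}\tau^{-1}$. Because $w(\tau)(\epsilon\tau^2-\nu)$ is an \emph{even} function of $\tau$, it contains only even powers, so the coefficient of $\tau^{-1}$ after division by $\tau^p$ vanishes precisely when $p$ is even; when $p$ is odd it is the coefficient of $\tau^{p-1}$ in $w(\tau)(\epsilon\tau^2-\nu)$, again an odd-indexed coefficient of an even polynomial, hence zero. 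So in all cases $c_{-1}=0$, and the antiderivative $W(\tau):=\int^\tau w(\sigma)(\epsilon\sigma^2-\nu)\sigma^{-p}\,d\sigma$ is a genuine Laurent polynomial in $\tau$ with no logarithmic part.

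Consequently $\int_s^{s_\ast} w(\tau)(\epsilon\tau^2-\nu)\tau^{-p}\,d\tau = W(s_\ast)-W(s)$ is, as a function of $s$, a Laurent polynomial in $s$, i.e. $P(s)/s^{p-1}$ for some polynomial $P$ (the highest negative power appearing is $\tau^{-(p-1)}$ since the lowest-degree term of $w(\tau)(\epsilon\tau^2-\nu)$ is the constant $-\nu\prod_i(-q_i^2/(4A_i))^{n_i}$ divided by $\tau^p$, whose antiderivative is a multiple of $\tau^{1-p}$). Multiplying by $s^{p-1}/w(s)$ as in \eqref{cal}, the factors $s^{p-1}$ cancel the denominator of the Laurent polynomial and we are left with $\alpha(s) = \big(s^{p-1}(W(s_\ast)-W(s))\big)/w(s)$, a ratio of two polynomials in $s$. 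Since $w(s)$ does not vanish identically, this exhibits $\alpha$ as a rational function of $s$ on $(0,s_\ast)$, as claimed.

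The only delicate point — and the one step I would be careful about — is the vanishing of the $\tau^{-1}$ coefficient, since otherwise $\alpha$ would acquire a $\log s$ term and fail to be rational. The parity argument above handles it cleanly: $w$ is even in $\tau$ because each $\beta_i$ in \eqref{cbe} is even in $s$, and $\epsilon\tau^2-\nu$ is visibly even, so $w(\tau)(\epsilon\tau^2-\nu)$ has only even-degree monomials, killing the residue at $0$ after division by $\tau^p$ regardless of the parity of $p$. Everything else is bookkeeping on degrees of polynomials.
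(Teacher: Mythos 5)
Your overall strategy — expand $w(\tau)(\epsilon\tau^2-\nu)$ as a polynomial, integrate the resulting Laurent polynomial term by term, and check that no $\log$ term arises — is exactly the paper's approach. But the step you yourself flag as the delicate one contains a genuine error. You argue that the coefficient of $\tau^{-1}$ in $w(\tau)(\epsilon\tau^2-\nu)/\tau^p$, which equals the coefficient of $\tau^{p-1}$ in the even polynomial $w(\tau)(\epsilon\tau^2-\nu)$, vanishes by parity in both cases; in particular you claim that when $p$ is odd this is ``an odd-indexed coefficient of an even polynomial.'' That is backwards: when $p$ is odd, $p-1$ is \emph{even}, so it is an even-indexed coefficient and the parity argument gives nothing. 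Since $w(\tau)(\epsilon\tau^2-\nu)=\sum_{j=0}^{m/2}C_1b_j\tau^{2j}$, the power $\tau^{p-1}$ actually occurs precisely when $p$ is odd and $(p-1)/2\le m/2$, i.e.\ $n\le 1$; as $m$ is even and $p=m+n$ is odd this forces $n=1$ and $(p-1)/2=m/2$, and the offending coefficient is then $C_1b_{m/2}=C_1\epsilon$, which is in general an even-degree coefficient that has no reason to vanish algebraically.

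What saves the statement is a geometric input your argument never invokes: if $n=1$ then $N$ is a closed $1$-dimensional Einstein manifold, hence a flat circle, so $\epsilon=0$ and therefore $b_{m/2}=\epsilon=0$. This is precisely how the paper closes the case $p$ odd, $n=1$ (and the case $n>1$ is handled by the degree count, since then $(p-1)/2>m/2$ and the power $\tau^{p-1}$ simply does not occur). So your proof has a real gap in the one case where a logarithm could appear; to repair it, replace the parity claim for odd $p$ by the degree bound $\deg\bigl(w(\tau)(\epsilon\tau^2-\nu)\bigr)\le m=p-n$ together with the observation that $n=1$ forces $\epsilon=0$. The remaining bookkeeping in your write-up (degrees, cancellation of $s^{p-1}$ against the lowest power $\tau^{1-p}$ of the antiderivative, and $w\not\equiv 0$) is correct.
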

\begin{proof}
It follows from \eqref{w} and \eqref{cbe} that
\begin{align*}
w(s)=\prod_{i=1}^r(A_i(s^2-\frac{q_i^2}{4A_i^2}))^{n_i}=C_1Q(s),
\end{align*}
where $C_1=\prod_{i=1}^rA_i^{n_i}$, and $Q(s)=\sum_{j=0}^{m/2-1}a_js^{2j}$ with
\begin{eqnarray}\label{aj}
a_j=\sum_{k_1+\cdots+k_r=j,\;0\le k_i\le n_i}\prod_{i=1}^rC_{n_i}^{k_i}(-\frac{q_i^2}{4A_i^2})^{n_i-k_i},&&0\le j\le\frac{m}{2}-1=\sum_{i=1}^rn_i.
\end{eqnarray}
In particular, $Q(0)=a_0=\prod_{i=1}^r(-\frac{q_i^2}{4A_i^2})^{n_i}\ne0$ and $a_{m/2-1}=1$. Furthermore,
\begin{align*}
Q(s)(\epsilon s^2-\nu)=\sum_{j=0}^{m/2}b_js^{2j},
\end{align*}
where
\begin{eqnarray}\label{b}
b_0=-\nu a_0;&b_j=\epsilon a_{j-1}-\nu a_j,\;1\le j\le\frac{m}{2}-1;&b_{m/2}=\epsilon a_{m/2-1}=\epsilon.
\end{eqnarray}
Substituting these into \eqref{cal} leads to
\begin{align*}
\alpha(s)=\frac{s^{p-1}}{Q(s)}\int_s^{s_\ast}\frac{Q(\tau)(\epsilon\tau^2-\nu)}{\tau^p}d\tau
=\frac{s^{p-1}}{Q(s)}\sum_{j=0}^{m/2}b_j\int_s^{s_\ast}\tau^{2j-p}d\tau.
\end{align*}
If $2j_\ast-p+1=0$ for some integer $0\le j_\ast\le\frac{m}{2}$, then $j_\ast=\frac{p-1}{2}\le\frac{m}{2}$, i.e., $p=m+n\le m+1$. Hence $n\le1$. Since $j_\ast$ is an integer, $p$ has to be odd. So does $n$ as $m=2+\sum_{i=1}^r2n_i$ is always even. Hence $n=1$ and $\epsilon=0$ because the (flat) circle is the only $1$-dimensional closed Riemannian manifold. It follows that $j_\ast=\frac{m}{2}$ and $b_{m/2}=0$. Hence no logarithmic term appears after integration, and
\begin{align}\label{alpq}
\alpha(s)=\frac{s^{p-1}}{Q(s)}\sum_{j=0}^{m/2}\frac{b_j}{2j-p+1}\tau^{2j-p+1}|_s^{s_\ast}=\frac{P(s)}{Q(s)},
\end{align}
where $P(s)=C_2s^{p-1}-\sum_{j=0}^{m/2}\frac{b_j}{2j-p+1}s^{2j}$ with $C_2=\sum_{j=0}^{m/2}\frac{b_j}{2j-p+1}s_\ast^{2j-p+1}$. Therefore, $\alpha$ is a rational function of $s$ because both $P$ and $Q$ are polynomial functions of $s$.
\end{proof}
\begin{remark}\label{ap}
Since $p\ge4$ by our constructions, it follows easily from \eqref{alpq} that $\alpha'(0)=0$.
\end{remark}

In order to guarantee that the boundary metric
\begin{align*}
g_0=\alpha(0)\theta\otimes\theta+\sum_{i=1}^r\beta_i(0)\hat{\pi}^\ast h_i
\end{align*}
is non-degenerate, we shall impose the boundary conditions that $\alpha(0)>0$ and $\beta_i(0)>0$, $1\le i\le r$. Since $\beta_i(0)=-\frac{q_i^2}{4A_i}$, we see $\beta_i(0)>0$ iff $A_i<0$. Thus $\beta_i$ is concave, and $\beta_i>0$ in $(0,s_\ast)$ provided $\beta_i(s_\ast)\ge0$. Consequently, $w>0$ on $[0,s_\ast)$. By l'H\^{o}pital's rule, we have
\begin{eqnarray*}
\lim_{s\searrow0}\alpha(s)&=&\frac{1}{w(0)}\lim_{s\searrow0}\frac{1}{s^{1-p}}\int_s^{s_\ast}\frac{w(\tau)(\epsilon\tau^2
-\nu)}{\tau^p}d\tau\\
&=&\frac{1}{w(0)}\lim_{s\searrow0}\frac{1}{(1-p)s^{-p}}\cdot\frac{w(s)(\nu-\epsilon s^2)}{s^p}\\
&=&\frac{\nu}{1-p}\\
&>&0,
\end{eqnarray*}
provided $\nu<0$. It remains to show the positivity of $\alpha$ on $[0,s_\ast)$ (see Proposition \ref{poal} below), which turns out to be a consequence of the smooth compactification at $s=s_\ast$. After this is done, we conclude that
\begin{align*}
g_s=\alpha(s)\theta\otimes\theta+\sum_{i=1}^r\beta_i(s)\hat{\pi}^\ast h_i
\end{align*}
is non-degenerate on $[0,s_\ast)$, and $\hat{g}=\alpha(s)^{-1}ds^2+g_s$ is smooth until $s=0$.

In what follows, we proceed to analyze the set of smooth compactification conditions obtained at the beginning of this subsection.

First of all, it follows from $\beta_1(s_\ast)=0$ that $s_\ast^2=\frac{q_1^2}{4A_1^2}$, i.e., $s_\ast=-\frac{1}{2A_1}>0$ as $|q_1|=1$ and $A_1<0$. In particular, $\beta_1'(s_\ast)=2A_1s_\ast=-1$.
\begin{lemma}\label{poi}
$\epsilon s^2-\nu>0$, $\forall\;s\in[0,s_\ast]$.
\end{lemma}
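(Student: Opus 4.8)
The plan is to prove Lemma~\ref{poi} by showing that the quantity $\epsilon s^2-\nu$ cannot vanish on $[0,s_\ast]$ and is positive at one point, hence positive throughout by continuity. First I would observe that at $s=0$ the expression equals $-\nu$, which is positive by the assumption $\nu<0$ made just above in the construction of the boundary metric; so it suffices to show $\epsilon s^2-\nu$ has no zero in $[0,s_\ast]$. I would argue by contradiction: suppose $\epsilon s_1^2=\nu$ for some $s_1\in(0,s_\ast]$. Since $\nu<0$, this forces $\epsilon<0$ and $s_1=\sqrt{\nu/\epsilon}$.

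The key mechanism is the formula \eqref{cal} for $\alpha$, namely $\alpha(s)=\frac{s^{p-1}}{w(s)}\int_s^{s_\ast}\frac{w(\tau)(\epsilon\tau^2-\nu)}{\tau^p}\,d\tau$, combined with the smooth compactification condition $\alpha(s_\ast)=0$ and $\alpha'(s_\ast)=-2$. From $\alpha(s_\ast)=0$ and $\alpha>0$ near $s=0$ (the limit computation gave $\alpha(0)=\nu/(1-p)>0$), together with $w>0$ on $[0,s_\ast)$, the sign of $\alpha$ on $(0,s_\ast)$ is governed by the sign of $G(s):=\int_s^{s_\ast}\frac{w(\tau)(\epsilon\tau^2-\nu)}{\tau^p}\,d\tau$. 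If $\epsilon s^2-\nu$ changed sign at $s_1$, then since $\epsilon<0$ the factor $\epsilon\tau^2-\nu$ is positive for $\tau<s_1$ and negative for $\tau>s_1$; thus the integrand of $G$ is negative on $(s_1,s_\ast)$, which makes $G(s)=-\int_s^{s_\ast}(\cdots)$ positive... I need to track this carefully: $G'(s)=-\frac{w(s)(\epsilon s^2-\nu)}{s^p}$, so $G$ is decreasing where $\epsilon s^2-\nu>0$ (i.e.\ $s<s_1$) and increasing where $\epsilon s^2-\nu<0$ (i.e.\ $s>s_1$), with $G(s_\ast)=0$. Hence on $(s_1,s_\ast)$, $G$ increases up to $0$, so $G<0$ there, forcing $\alpha<0$ on $(s_1,s_\ast)$. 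But this contradicts the positivity of $\alpha$ on $[0,s_\ast)$ that we are required to establish — more directly, it contradicts $\alpha'(s_\ast)=-2<0$: near $s_\ast$ from the left, $G<0$ and $G$ increasing to $0$ would make $\alpha=\frac{s^{p-1}}{w}G<0$ approaching $0$ from below, hence $\alpha'(s_\ast)\ge0$, contradicting $\alpha'(s_\ast)=-2$.

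The main obstacle is the logical ordering: Proposition~\ref{poal} (positivity of $\alpha$ on $[0,s_\ast)$) is stated to come \emph{after} Lemma~\ref{poi}, so I should not invoke it circularly; instead I would lean on the smooth-compactification data $\alpha(s_\ast)=0$, $\alpha'(s_\ast)=-2$, which are available a priori as hypotheses on the exact solution, to derive the contradiction. Concretely, computing $\alpha'(s_\ast)$ from \eqref{cal} via the product and fundamental-theorem-of-calculus rules: at $s=s_\ast$ one has $\alpha'(s_\ast)=\frac{s_\ast^{p-1}}{w(s_\ast)}\cdot\left(-\frac{w(s_\ast)(\epsilon s_\ast^2-\nu)}{s_\ast^p}\right)=-\frac{\epsilon s_\ast^2-\nu}{s_\ast}$, using $G(s_\ast)=0$ to kill the term coming from differentiating the prefactor. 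Setting this equal to $-2$ gives $\epsilon s_\ast^2-\nu=2s_\ast>0$, so the expression is strictly positive at the right endpoint as well. Then $\epsilon s^2-\nu$ is positive at both endpoints $s=0$ and $s=s_\ast$; since $\epsilon s^2-\nu$ is a monotone function of $s^2$ (linear in $s^2$ with slope $\epsilon$), a function that is affine in $s^2$ and positive at $s^2=0$ and $s^2=s_\ast^2$ is positive on the whole interval $[0,s_\ast^2]$ — there is no room for an interior zero. This monotone/affine observation is the clean way to finish, avoiding any appeal to the still-unproved Proposition~\ref{poal}, and it is the step I would present as the heart of the argument.
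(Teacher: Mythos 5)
Your closing step (an affine function of $s^2$ that is positive at $s^2=0$ and at $s^2=s_\ast^2$ is positive on all of $[0,s_\ast]$) is fine and is essentially how the paper reduces the lemma to the single endpoint $s=s_\ast$ in the case $\epsilon<0$ (for $\epsilon\ge0$ the claim is immediate from $\nu<0$). The problem is how you establish positivity at $s=s_\ast$. The paper does this in one line from the consistency condition \eqref{csc} with $i=1$: since $s_\ast^2=\frac{q_1^2}{4A_1^2}$ (from $\beta_1(s_\ast)=0$), one has $\epsilon s_\ast^2-\nu=\frac{\epsilon q_1^2}{4A_1^2}-\nu=-\frac{p_1}{A_1}>0$ because $A_1<0$ and $p_1=n_1+1>0$. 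You never invoke \eqref{csc}, and instead try to extract the sign from $\alpha'(s_\ast)=-2$.

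That route has two genuine defects. First, your evaluation $\alpha'(s_\ast)=-\frac{\epsilon s_\ast^2-\nu}{s_\ast}$ is only valid when $n_1=0$: for $n_1>0$ one has $w(s_\ast)=0$ (because $\beta_1(s_\ast)=0$ and $w$ contains the factor $\beta_1^{n_1}$), so the prefactor $s^{p-1}/w(s)$ blows up, the na\"ive product-rule computation fails, and the correct value (obtained by l'H\^opital in Proposition \ref{alc}) carries an extra factor $\frac{1}{n_1+1}$; your identity $\epsilon s_\ast^2-\nu=2s_\ast$ is accordingly off by the factor $p_1$. Second, and more seriously, $\alpha'(s_\ast)=-2$ is not an a priori hypothesis: it is the content of Proposition \ref{alc}, which is \emph{verified} after Lemma \ref{poi}, and its verification consists precisely of the computation $\nu-\epsilon s_\ast^2=\frac{p_1}{A_1}$ from \eqref{csc} followed by division by $s_\ast=-\frac{1}{2A_1}$. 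So deriving the lemma from $\alpha'(s_\ast)=-2$ either presupposes an unproved claim or, once you prove that claim, repeats the direct computation you were trying to avoid. (Your instinct to avoid Proposition \ref{poal} was right — it is indeed downstream of this lemma — but the same caution applies to Proposition \ref{alc}.) The fix is simply to replace the $\alpha'$ detour by the two-line consistency-condition computation above.
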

\begin{proof}
This is obvious for $\epsilon\ge0$ as $\nu<0$. For $\epsilon<0$, we have
\begin{align*}
\epsilon s^2-\nu\ge\epsilon s_\ast^2-\nu=\frac{\epsilon q_1^2}{4A_1^2}-\nu=-\frac{p_1}{A_1}>0,
\end{align*}
where the consistency condition \eqref{csc} has been used in the last equality.
\end{proof}
\begin{proposition}\label{poal}
$\alpha(s)>0$, $\forall\;s\in[0,s_\ast)$.
\end{proposition}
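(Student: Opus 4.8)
The plan is to rule out any zero of $\alpha$ in the open interval $(0,s_\ast)$ by a continuity argument driven by the sign of $\alpha'$ at its zeros, using the linear ODE \eqref{sie} together with Lemma \ref{poi} and the boundary data already assembled. First I would record two facts that are in hand: (i) $\alpha$ extends continuously to $s=0$ with $\alpha(0)=\nu/(1-p)>0$, by the l'H\^{o}pital computation preceding the statement; and (ii) near $s=s_\ast$ the smooth compactification forces $\alpha(s_\ast)=0$ and $\alpha'(s_\ast)=-2$, so that $\alpha(s)=2(s_\ast-s)+o(s_\ast-s)>0$ for $s$ slightly below $s_\ast$. On $(0,s_\ast)$, $\alpha$ is the rational function $P/Q$ of Proposition \ref{ral}, in particular smooth, with no pole there since $w=C_1Q$ and $w>0$ on $[0,s_\ast)$; and since $\kappa_0=0$, equation \eqref{sie} reads $\alpha'(s)=\alpha(s)\big(\tfrac{p-1}{s}-(\log w)'(s)\big)-\epsilon s+\tfrac{\nu}{s}$ on $(0,s_\ast)$, with all coefficients finite there.

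The key observation is that at any $s_0\in(0,s_\ast)$ with $\alpha(s_0)=0$ the first term drops out and
\[
\alpha'(s_0)=-\epsilon s_0+\frac{\nu}{s_0}=-\frac{\epsilon s_0^2-\nu}{s_0}<0
\]
by Lemma \ref{poi} and $s_0>0$. Hence every zero of $\alpha$ in $(0,s_\ast)$ is a transversal crossing from positive to negative values; in particular $\alpha$ has at most one such zero, for if $s_1<s_2$ were two of them, then $\alpha<0$ just to the right of $s_1$ and $\alpha>0$ just to the left of $s_2$, forcing an intermediate zero at which $\alpha'\ge0$, contradicting the displayed inequality. Now suppose, for contradiction, that $\alpha$ vanishes somewhere in $(0,s_\ast)$, and let $s_1$ be the (unique) such zero; then $\alpha>0$ on $[0,s_1)$ because $\alpha(0)>0$ and there is no earlier zero, while $\alpha<0$ on $(s_1,s_\ast)$ because $\alpha'(s_1)<0$ pushes it negative just past $s_1$ and there is no later zero at which it could return to $0$. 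This contradicts fact (ii), that $\alpha>0$ immediately to the left of $s_\ast$. Therefore $\alpha$ has no zero in $(0,s_\ast)$, and being continuous with $\alpha(0)>0$ it must be strictly positive on all of $[0,s_\ast)$.

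The only genuinely delicate point is bookkeeping rather than analysis: equation \eqref{sie} is singular at both endpoints ($s=0$ because of the $1/s$ terms, and $s=s_\ast$ because $(\log w)'\to\pm\infty$ when $\beta_1(s_\ast)=0$), so one must not evaluate it at $0$ or at $s_\ast$; the endpoint information must instead be imported from the already-established limit at $0$ and from the smooth compactification at $s_\ast$. One should also confirm that the inputs feeding this argument are exactly the conditions imposed so far, namely $A_i<0$ (hence $\beta_i>0$ on $(0,s_\ast)$ and $w>0$ there), $\nu<0$, and the consistency condition \eqref{csc} used inside Lemma \ref{poi}. As a sanity check, the same conclusion can be read straight off \eqref{cal}: for $s\in(0,s_\ast)$ the integrand $w(\tau)(\epsilon\tau^2-\nu)\tau^{-p}$ is positive on $(s,s_\ast)$ by $w>0$ and Lemma \ref{poi}, so $\alpha(s)=s^{p-1}w(s)^{-1}\int_s^{s_\ast}(\cdots)\,d\tau>0$, with $\alpha(0)>0$ again coming from the limit.
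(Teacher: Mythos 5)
Your argument is correct, but you have the emphasis inverted: the ``sanity check'' in your final paragraph is precisely the paper's entire proof. Proposition \ref{poal} is a one-liner from \eqref{cal}: for $s\in(0,s_\ast)$ the prefactor $s^{p-1}/w(s)$ is positive since $w>0$ on $[0,s_\ast)$, and the integrand $w(\tau)(\epsilon\tau^2-\nu)\tau^{-p}$ is positive on $(s,s_\ast)$ by the same positivity of $w$ together with Lemma \ref{poi}; the value at $s=0$ is the already-computed limit $\nu/(1-p)>0$. Your main argument --- the transversal-downcrossing analysis of zeros of $\alpha$ via the linear ODE \eqref{sie} --- is also valid and correctly handles the endpoint singularities of the ODE, but it is a substantial detour and, more importantly, it imports Proposition \ref{alc} ($\alpha(s_\ast)=0$, $\alpha'(s_\ast)=-2$) as an essential input. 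In the paper that proposition is stated and proved \emph{after} \ref{poal}; its proof rests only on \eqref{cal} and l'H\^{o}pital, so there is no circularity, but your ordering obscures the fact that the positivity of $\alpha$ needs none of the compactification data at $s_\ast$ beyond $\beta_1(s_\ast)\ge 0$ (which gives $w>0$ on $[0,s_\ast)$) and the sign information in Lemma \ref{poi}. The direct route is both shorter and logically lighter; the ODE route would only earn its keep in a setting where no closed-form integral representation of $\alpha$ were available.
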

\begin{proof}
The statement follows from \eqref{cal}, the positivity of $w$ in $[0,s_\ast)$ and Lemma \ref{poi}.
\end{proof}
Secondly, it follows from $\beta_i(s_\ast)>0$ that $s_\ast^2<\frac{q_i^2}{4A_i^2}$, i.e.,
\begin{eqnarray}
\label{c3}A_i^2<q_i^2A_1^2,&&2\le i\le r.
\end{eqnarray}
\begin{proposition}\label{alc}
$\alpha(s_\ast)=0$ and $\alpha'(s_\ast)=-2$.
\end{proposition}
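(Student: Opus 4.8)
The plan is to verify the two smoothness conditions at the event horizon directly from the closed-form expression for $\alpha$. Recall $s_\ast=-\frac{1}{2A_1}$ and $|q_1|=1$, so $\beta_1(s_\ast)=A_1 s_\ast^2-\frac{q_1^2}{4A_1}=0$; since $w(s)=C_1 Q(s)$ with $Q(s_\ast)=\beta_1(s_\ast)^{n_1}\prod_{i\ge2}\beta_i(s_\ast)^{n_i}=0$ when $n_1\ge1$, one has to distinguish the cases $n_1\ge1$ and $n_1=0$, but in either case the integral representation \eqref{cal} (equivalently the rational form \eqref{alpq}) makes the evaluation at $s=s_\ast$ transparent.

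First I would compute $\alpha(s_\ast)$. From \eqref{cal}, $\alpha(s)=\frac{s^{p-1}}{w(s)}\int_s^{s_\ast}\frac{w(\tau)(\epsilon\tau^2-\nu)}{\tau^p}\,d\tau$. When $n_1=0$, $w(s_\ast)>0$ and the integral vanishes at $s=s_\ast$, giving $\alpha(s_\ast)=0$ immediately. When $n_1\ge1$, both $w(s_\ast)=0$ and the integral vanish, so I would use l'H\^{o}pital's rule (as in the computation of $\lim_{s\searrow0}\alpha$): the ratio $\frac{1}{w(s)}\int_s^{s_\ast}\frac{w(\tau)(\epsilon\tau^2-\nu)}{\tau^p}d\tau$ is $\frac{0}{0}$ at $s_\ast$, and differentiating numerator and denominator yields $\frac{-w(s)(\epsilon s^2-\nu)s^{-p}}{w'(s)}\to 0$ as $s\to s_\ast$ provided $w'(s_\ast)\ne0$; since $w=C_1\prod\beta_i^{n_i}$ and $\beta_1$ has a simple zero at $s_\ast$ with $\beta_1'(s_\ast)=-1\ne0$, indeed $w$ vanishes to first order and $w'(s_\ast)\ne0$. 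Hence $\alpha(s_\ast)=0$ in all cases.

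Next I would compute $\alpha'(s_\ast)=-2$. The cleanest route is to use the linear ODE \eqref{sie}, which with $\kappa_0=0$ reads $\alpha'=\alpha\bigl(\frac{p-1}{s}-(\log w)'\bigr)-\epsilon s+\frac{\nu}{s}$. Evaluating at $s=s_\ast$ and using $\alpha(s_\ast)=0$ collapses the first term to $\lim_{s\to s_\ast}\alpha(s)\cdot(-(\log w)'(s))$; when $n_1=0$ this is simply $0$, so $\alpha'(s_\ast)=-\epsilon s_\ast+\frac{\nu}{s_\ast}=\frac{\nu-\epsilon s_\ast^2}{s_\ast}=-2$ by Lemma~\ref{poi}'s intermediate identity $\nu-\epsilon s_\ast^2=\frac{p_1}{A_1}$ together with $s_\ast=-\frac{1}{2A_1}$, giving $\frac{p_1/A_1}{-1/(2A_1)}=-2p_1=-2$ since $p_1=1$ for $V_1=\mathbb{C}P^{n_1}$. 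When $n_1\ge1$ the term $\alpha(s)(\log w)'(s)$ is $0\cdot\infty$ because $(\log w)'(s)=n_1\beta_1'(s)/\beta_1(s)+\cdots$ blows up like $\frac{n_1\beta_1'(s_\ast)}{\beta_1(s)}$; here I would again use l'H\^{o}pital, writing $\alpha(s)(\log w)'(s)\approx \alpha(s)\cdot\frac{n_1(-1)}{\beta_1(s)}$ and evaluating $\lim_{s\to s_\ast}\frac{\alpha(s)}{\beta_1(s)}$ as a $\frac{0}{0}$ limit via the rational form \eqref{alpq}. Collecting all the finite contributions and simplifying with $s_\ast=-\frac{1}{2A_1}$, $p_1=1$, and the consistency condition \eqref{csc} should yield exactly $\alpha'(s_\ast)=-2$.

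The main obstacle is the bookkeeping in the case $n_1\ge1$: the factor $w$ vanishing at $s_\ast$ forces repeated uses of l'H\^{o}pital's rule and careful tracking of which pieces of $(\log w)'$ are singular versus regular, and one must confirm that the singular pieces conspire to produce precisely the $-2p_1=-2$ contribution rather than something dimension-dependent. A safer alternative, which I would fall back on if the limit computation gets unwieldy, is to pass through the smooth local coordinate $t$ near $t_\ast$: using $\frac{d}{dt}=\sqrt{\alpha}\,\frac{d}{ds}$ one has $f=\sqrt{\alpha}$ and the smoothness setup already demands $\dot{f}(t_\ast)=-1$, equivalently $(\sqrt{\alpha})^{\textstyle\cdot}=\tfrac{\alpha'}{2\sqrt{\alpha}}\sqrt{\alpha}=\tfrac{\alpha'}{2}\to-1$; but since the point of this proposition is to \emph{establish} the smoothness conditions rather than assume them, I would present the direct ODE-based computation and relegate the $t$-coordinate remark to a sanity check.
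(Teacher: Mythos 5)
Your strategy is essentially the paper's: establish $\alpha(s_\ast)=0$ by l'H\^opital (splitting on $n_1=0$ versus $n_1\ge1$), then extract $\alpha'(s_\ast)$ from a limit in which the singular part of $(\log w)'$ coming from the simple zero of $\beta_1$ at $s_\ast$ must be balanced against the constant $\frac{\nu-\epsilon s_\ast^2}{s_\ast}$ supplied by the consistency condition. Routing the second step through the ODE \eqref{sie} rather than differentiating the integral formula directly is a legitimate variant: writing $L=\alpha'(s_\ast)$ and using $\lim_{s\to s_\ast}\alpha/\beta_1=\alpha'(s_\ast)/\beta_1'(s_\ast)=-L$, the ODE yields the linear relation $(1+n_1)L=\frac{\nu-\epsilon s_\ast^2}{s_\ast}=-2p_1$, which is exactly the same cancellation the paper obtains from $\lim(s-s_\ast)(\log w)'=n_1$.

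There is, however, a concrete error that prevents your computation from closing in the case $n_1\ge1$: you repeatedly set $p_1=1$ for $V_1=\mathbb{C}P^{n_1}$. The correct value is $p_1=n_1+1$ (the Einstein constant of the normalized Fubini--Study metric on $\mathbb{C}P^{n_1}$), and this is precisely the fact the paper invokes in the last line of its proof. With $p_1=n_1+1$ the relation above gives $(1+n_1)L=-2(n_1+1)$, hence $L=-2$; with your $p_1=1$ it would give $L=-2/(n_1+1)\ne-2$. So the ``singular pieces'' do conspire, but only because of the identity $p_1=n_1+1$ — the very dimension-dependence you worried about is what makes the answer dimension-independent. (Your $n_1=0$ sub-case is unaffected, since there $V_1=\mathbb{C}P^0$ and $p_1=1$ genuinely.) You should also state explicitly why $\alpha$ is differentiable at $s_\ast$ before applying l'H\^opital to $\alpha/\beta_1$ — e.g.\ note that $\alpha=P/Q$ is rational and its singularity at $s_\ast$ is removable once $\alpha(s_\ast)=0$ is known — or, as the paper does, compute $\lim_{s\nearrow s_\ast}\alpha(s)/(s-s_\ast)$ directly from the integral representation, which sidesteps the issue.
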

\begin{proof}
To show that $\alpha$ vanishes at $s=s_\ast$, we shall distinguish two cases.

If $n_1=0$, then $w(s_\ast)>0$, and it follows directly from \eqref{cal} that $\alpha(s_\ast)=0$.

If $n_1>0$, then $w(s_\ast)=0$ as $\beta_1(s_\ast)=0$. We have to appeal to l'H\^{o}pital's rule to deduce
\begin{align*}
\lim_{s\nearrow s_\ast}\alpha(s)=s_\ast^{p-1}\lim_{s\nearrow s_\ast}\frac{1}{w'(s)}\cdot\frac{w(s)(\nu-\epsilon s^2)}{s^p}=\frac{\nu-\epsilon s_\ast^2}{s_\ast}\lim_{s\nearrow s_\ast}\frac{w(s)}{w'(s)}=0,
\end{align*}
as $\beta_1'(s_\ast)\ne0$. In both cases, we end up with $\alpha(s_\ast)=0$. By definition, we have
\begin{eqnarray*}
\alpha'(s_\ast)&=&\lim_{s\nearrow s_\ast}\frac{\alpha(s)}{s-s_\ast}\\
&=&s_\ast^{p-1}\lim_{s\nearrow s_\ast}\frac{1}{(s-s_\ast)w(s)}\int_s^{s_\ast}\frac{w(\tau)(\epsilon\tau^2-\nu)}{\tau^p}d\tau\\
&=&s_\ast^{p-1}\lim_{s\nearrow s_\ast}\frac{1}{w(s)+(s-s_\ast)w'(s)}\cdot\frac{w(s)(\nu-\epsilon s^2)}{s^p}\\
&=&\frac{\nu-\epsilon s_\ast^2}{s_\ast}\lim_{s\nearrow s_\ast}\frac{1}{1+(s-s_\ast)(\log w(s))'}\\
&=&\frac{\nu-\epsilon s_\ast^2}{s_\ast}\cdot\frac{1}{n_1+1}\\
&=&-2.
\end{eqnarray*}
In the above we have used the facts $\beta_1=A_1(s^2-s_\ast^2)$ and $p_1=n_1+1$ for $V_1=\mathbb{C}P^{n_1}$.
\end{proof}
It remains to determine $A_i$ from the consistency condition \eqref{csc}, and then make sure that they satisfy the inequality \eqref{c3}. We shall divide our discussion into three cases $\epsilon=0$, $\epsilon<0$ and $\epsilon>0$.

When $\epsilon=0$, we have simply
\begin{align*}
A_i=\frac{p_i}{\nu}.
\end{align*}
Substituting this into \eqref{c3} leads to a topological restriction $p_i^2<p_1^2q_i^2$, i.e., $p_1|q_i|>p_i$, $2\le i\le r$.

If $\epsilon\ne0$, we can then rewrite the equation for $A_i$ as follows
\begin{align}
\label{qu}A_i^2-\frac{p_i}{\nu}A_i-\frac{\epsilon q_i^2}{4\nu}=0,
\end{align}
whose discriminant is given by $\Delta_i=\frac{p_i^2}{\nu^2}+\frac{\epsilon q_i^2}{\nu}$. We need $\Delta_i\ge0$ to guarantee that \eqref{qu} has real (negative) roots.

When $\epsilon<0$, $\Delta_i$ is always positive as $\nu\epsilon>0$, and \eqref{qu} has a unique negative root
\begin{align*}
A_i=\frac{1}{2}(\frac{p_i}{\nu}-\sqrt{\Delta_i})=\frac{1}{2\nu}(p_i+\sqrt{p_i^2+\nu\epsilon q_i^2}).
\end{align*}
Substituting this into \eqref{c3} leads to the same topological restriction $p_1|q_i|>p_i$, $2\le i\le r$.

When $\epsilon>0$, $\Delta_i\ge0$ is equivalent to $\nu\ge-\frac{p_i^2}{\epsilon q_i^2}$, $1\le i\le r$. Now \eqref{qu} has two types of negative roots
\begin{align*}
A_i=\frac{1}{2}(\frac{p_i}{\nu}\pm\sqrt{\Delta_i})=\frac{1}{2\nu}(p_i\mp\sqrt{p_i^2+\nu\epsilon q_i^2}).
\end{align*}
We shall substitute these into \eqref{c3} to deduce the (topological) restrictions. There are four cases as follows.

If $A_1=\frac{1}{2\nu}(p_1+\sqrt{p_1^2+\nu\epsilon q_1^2})$ and $A_i=\frac{1}{2\nu}(p_i+\sqrt{p_i^2+\nu\epsilon q_i^2})$, then the resulting topological restriction is $p_1|q_i|>p_i$.

If $A_1=\frac{1}{2\nu}(p_1+\sqrt{p_1^2+\nu\epsilon q_1^2})$ and $A_i=\frac{1}{2\nu}(p_i-\sqrt{p_i^2+\nu\epsilon q_i^2})$, then the resulting restriction is $\nu>-\frac{p_1^2}{\epsilon}$ or $p_1|q_i|<p_i$. Notice that the first restriction is not a topological restriction.

If $A_1=\frac{1}{2\nu}(p_1-\sqrt{p_1^2+\nu\epsilon q_1^2})$ and $A_i=\frac{1}{2\nu}(p_i+\sqrt{p_i^2+\nu\epsilon q_i^2})$, then \eqref{c3} always fails.

If $A_1=\frac{1}{2\nu}(p_1-\sqrt{p_1^2+\nu\epsilon q_1^2})$ and $A_i=\frac{1}{2\nu}(p_i-\sqrt{p_i^2+\nu\epsilon q_i^2})$, then the resulting topological restriction is $p_1|q_i|<p_i$.

Now it is time to summarize what we have obtained above.
\begin{proposition}\label{fes}
Let $(\hat{M}^m,\hat{g})$ be as in Proposition \ref{loc} with $\kappa_0=0$, and $s=0$ corresponding to $t=0$. We can smoothly compactify $(\hat{M},\hat{g})$ at $s_\ast=-\frac{1}{2A_1}$, and then extend it to $s=0$ provided $\alpha$ and $\beta_i$, $1\le i\le r$, are given by \eqref{cal} and \eqref{cbe}, where $w=\prod_{i=1}^r\beta_i^{n_i}$, $m=2+\sum_{i=1}^r2n_i$, $p=m+n$, $p_1=n_1+1$ and $|q_1|=1$. The involved constants $\nu$ and $A_i$ are determined as follows:

if $\epsilon=0$, and $p_1|q_i|>p_i$ for all $2\le i\le r$, then $\nu<0$, and $A_i=\frac{p_i}{\nu}$, $1\le i\le r$;

if $\epsilon<0$, and $p_1|q_i|>p_i$ for all $2\le i\le r$, then $\nu<0$, and $A_i=\frac{1}{2\nu}(p_i+\sqrt{p_i^2+\nu\epsilon q_i^2})$, $1\le i\le r$;

if $\epsilon>0$, define $\phi=\min_{1\le i\le r}\{p_i^2q_i^{-2}\}\le p_1^2$. There are several cases.
\begin{enumerate}
 \item if $p_1|q_i|>p_i$ for some $2\le i\le r$, then $\phi<p_1^2$. We can choose $\nu\in[-\frac{\phi}{\epsilon},0)$, $A_1=\frac{1}{2\nu}(p_1+\sqrt{p_1^2+\nu\epsilon q_1^2})$ and $A_i=\frac{1}{2\nu}(p_i+\delta_i\sqrt{p_i^2+\nu\epsilon q_i^2})$, $2\le i\le r$, with $\delta_i=\pm1$ if $p_1|q_i|>p_i$, or $\delta_i=-1$ if $p_1|q_i|\le p_i$;
 \item if $p_1|q_i|\le p_i$ for all $2\le i\le r$, then $\phi=p_1^2$. There are two subcases.
\begin{enumerate}
 \item if $p_1|q_i|=p_i$ for some $2\le i\le r$, we can choose $\nu\in(-\frac{\phi}{\epsilon},0)$, $A_1=\frac{1}{2\nu}(p_1+\sqrt{p_1^2+\nu\epsilon q_1^2})$ and $A_i=\frac{1}{2\nu}(p_i-\sqrt{p_i^2+\nu\epsilon q_i^2})$, $2\le i\le r$;
 \item if $p_1|q_i|<p_i$ for all $2\le i\le r$, we can choose $\nu\in[-\frac{\phi}{\epsilon},0)$, $A_1=\frac{1}{2\nu}(p_1\pm\sqrt{p_1^2+\nu\epsilon q_1^2})$ and $A_i=\frac{1}{2\nu}(p_i-\sqrt{p_i^2+\nu\epsilon q_i^2})$, $2\le i\le r$.
\end{enumerate}
\end{enumerate}
\end{proposition}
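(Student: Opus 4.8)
The plan is to assemble Proposition \ref{fes} as a bookkeeping statement that merely collects the work done throughout \S\ref{com}; there is essentially no new mathematics, only careful case tracking. I would organize the proof around the following pipeline. First, the local solution $(\hat{M},\hat{g})$ of Proposition \ref{loc} is already in hand, and with the normalizations $\kappa_0=0$ and $s=0\leftrightarrow t=0$ the formulas \eqref{cal}, \eqref{cbe}, \eqref{cva} hold. The smooth compactification at $s_\ast$ requires $V_1=\mathbb{C}P^{n_1}$ with $|q_1|=1$, which forces $p_1=n_1+1$; the conditions $\alpha(s_\ast)=0$, $\alpha'(s_\ast)=-2$, $\beta_1(s_\ast)=0$, $\beta_1'(s_\ast)=-1$ and $\beta_i(s_\ast)>0$ are then exactly Proposition \ref{alc} together with the identification $s_\ast=-\tfrac{1}{2A_1}$ derived from $\beta_1(s_\ast)=0$, plus the boundary conditions $\alpha(0)>0$, $\beta_i(0)>0$. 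By Proposition \ref{reg} (after checking the $C^2$/$C^3$ regularity noted at the start of \S\ref{com}) this extends smoothly across $s=s_\ast$ and, via Proposition \ref{poal} together with the limit computation showing $\alpha(0)=\tfrac{\nu}{1-p}>0$, across $s=0$ as well. So the substance of the proof is verifying that the stated choices of $\nu$ and $A_i$ are (i) real and negative, so $\beta_i(0)=-\tfrac{q_i^2}{4A_i}>0$ and the quadratics make sense, and (ii) compatible with the consistency condition \eqref{csc} and the inequality \eqref{c3}, i.e. $s_\ast^2<\tfrac{q_i^2}{4A_i^2}$, equivalently $A_i^2<q_i^2A_1^2$.

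I would then dispatch the three regimes in turn. For $\epsilon=0$: \eqref{csc} gives $A_i=p_i/\nu$ directly, negativity of $A_i$ needs $\nu<0$ (consistent with $\alpha(0)>0$), and \eqref{c3} reads $p_i^2/\nu^2<q_i^2 p_1^2/\nu^2$, i.e. $p_1|q_i|>p_i$. For $\epsilon<0$: here $\nu<0$ forces $\nu\epsilon>0$, so the discriminant $\Delta_i=\tfrac{p_i^2}{\nu^2}+\tfrac{\epsilon q_i^2}{\nu}$ is automatically positive, \eqref{qu} has exactly one negative root $A_i=\tfrac{1}{2\nu}(p_i+\sqrt{p_i^2+\nu\epsilon q_i^2})$ (the other root is positive), and substituting into \eqref{c3} again yields $p_1|q_i|>p_i$ after a short algebraic simplification. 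For $\epsilon>0$: now $\Delta_i\ge0$ becomes the constraint $\nu\ge-\tfrac{p_i^2}{\epsilon q_i^2}$ for every $i$, hence $\nu\ge-\tfrac{\phi}{\epsilon}$ with $\phi=\min_i\{p_i^2q_i^{-2}\}\le p_1^2$; both roots $A_i=\tfrac{1}{2\nu}(p_i\mp\sqrt{p_i^2+\nu\epsilon q_i^2})$ are negative, and one must plug each of the four sign combinations for $(A_1,A_i)$ into \eqref{c3}. The four subcases recorded in the statement are precisely the outcome of that substitution: the combination with both ``$-$ inside the square root'' signs never satisfies \eqref{c3}, one combination produces $p_1|q_i|>p_i$, one produces the non-topological alternative $\nu>-p_1^2/\epsilon$ or $p_1|q_i|<p_i$, and one produces $p_1|q_i|<p_i$. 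Sorting these according to whether $\phi<p_1^2$ or $\phi=p_1^2$, and within the latter whether equality $p_1|q_i|=p_i$ occurs (which, combined with $\Delta_1\ge0$, excludes the endpoint $\nu=-\phi/\epsilon$ and hence forces the open interval), gives exactly cases 1, 2(a), 2(b). I would end by remarking that in every case $\alpha(0)>0$ holds because $\nu<0$, closing the loop.

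The one genuinely delicate point — the ``hard part'' — is the $\epsilon>0$ analysis: one must simultaneously keep track of which root of \eqref{qu} is chosen for $A_1$ versus $A_i$, the discriminant constraint $\nu\ge-\phi/\epsilon$, and the resulting form of \eqref{c3}, and then argue that the listed menu of $(\nu,\{A_i\})$ choices is both valid and exhaustive of the topologically-unrestricted possibilities. In particular the distinction between the closed interval $[-\phi/\epsilon,0)$ and the open interval $(-\phi/\epsilon,0)$ in subcase 2(a) hinges on the observation that if $p_1|q_i|=p_i$ then at $\nu=-\phi/\epsilon=-p_i^2/(\epsilon q_i^2)$ one has $\Delta_i=0$, forcing $A_i=p_i/(2\nu)$ and $A_i^2=q_i^2A_1^2$ at a boundary value of $\Delta_1$, which makes \eqref{c3} fail; hence that endpoint must be excluded. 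Everything else is elementary algebra with quadratics, so I would present the $\epsilon=0$ and $\epsilon<0$ cases tersely and devote the bulk of the written proof to laying out the $\epsilon>0$ sign-combination table and verifying each line against \eqref{c3}.
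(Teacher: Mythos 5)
Your overall architecture is exactly the paper's: Proposition \ref{fes} is a summary of \S\ref{com}, assembled from Propositions \ref{alc}, \ref{poal} and \ref{reg}, the boundary requirements $\alpha(0)=\frac{\nu}{1-p}>0$ and $\beta_i(0)=-\frac{q_i^2}{4A_i}>0$ (hence $\nu<0$, $A_i<0$), the identification $s_\ast=-\frac{1}{2A_1}$ from $\beta_1(s_\ast)=0$, and the reduction of everything else to the consistency condition \eqref{csc} together with the inequality \eqref{c3}. Your treatment of the cases $\epsilon=0$ and $\epsilon<0$ is correct, as is your explanation (which the paper leaves implicit) of why the endpoint $\nu=-\frac{\phi}{\epsilon}$ must be excluded in subcase 2(a).

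There is, however, a concrete error in the step you yourself single out as the crux, the four-way sign analysis for $\epsilon>0$. Writing $A_i^{\pm}=\frac{1}{2\nu}(p_i\pm\sqrt{p_i^2+\nu\epsilon q_i^2})$ for the two (negative) roots of \eqref{qu}, the combination that always violates \eqref{c3} is $(A_1^-,A_i^+)$, \emph{not} the double-minus combination $(A_1^-,A_i^-)$ as you claim. Indeed, the product of the roots of \eqref{qu} gives $A_i^+A_i^-=-\frac{\epsilon q_i^2}{4\nu}=q_i^2A_1^+A_1^-$ (using $q_1^2=1$), and since $|A_i^+|\ge|A_i^-|$ and $|A_1^+|\ge|A_1^-|$ this forces $(A_i^+)^2\ge A_i^+A_i^-=q_i^2A_1^+A_1^-\ge q_i^2(A_1^-)^2$, so $(A_1^-,A_i^+)$ can never satisfy the strict inequality \eqref{c3}. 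By contrast, using $p-\sqrt{p^2+\nu\epsilon q^2}=\frac{-\nu\epsilon q^2}{p+\sqrt{p^2+\nu\epsilon q^2}}$, the condition $|A_i^-|<|q_i|\,|A_1^-|$ is equivalent to $|q_i|(p_1+\sqrt{p_1^2+\nu\epsilon})<p_i+\sqrt{p_i^2+\nu\epsilon q_i^2}$, hence (by monotonicity of $x\mapsto x+\sqrt{x^2-c}$) to $p_1|q_i|<p_i$. This is not a cosmetic slip: the combination $(A_1^-,A_i^-)$ is precisely what realizes subcase 2(b) when the lower sign is chosen for $A_1$, so under your attribution that choice would have to be discarded and the $\pm$ in the statement of 2(b) would become unverifiable. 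The rest of your outcome table agrees with the paper once the two rows with $A_1=A_1^-$ are swapped, so the fix is local, but as written the verification of case 2(b) fails.
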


\subsection{Geometric discussion}\label{geo}

We now move on to the geometric implications of those solutions in Proposition \ref{fes}. After smoothly compactifying and extending $(\hat{M}^m,\hat{g})$ described above, we obtain a compact manifold $(\bar{M}^m,g_{\bar{M}})$ with interior $M$ and boundary $\partial M$, which has $(\hat{M},\hat{g})$ as an open and dense subset. Topologically, $\partial M=\{0\}\times P_q$ is a copy of the principal circle bundle over the product $\mathbb{C}P^{n_1}\times V_2\times\cdots\times V_r$, and $M$ is the total space of the corresponding complex $B^{2n_1+2}$-bundle over $V_2\times\cdots\times V_r$. Since $\rho=\kappa_1s$ satisfies $\rho>0$ in $M$, $\rho=0$ and $d\rho=\kappa_1ds\ne0$ on $\partial M$, it serves as a defining function for $\partial M$. Let $(N^n,g_N)$, $n=p-m\ge0$, be an arbitrary connected closed Einstein manifold with Einstein constant $\epsilon$.
\begin{lemma}\label{comei}
$(M^m\times N^n,\rho^{-2}(g_M+g_N))$ is a complete Einstein manifold.
\end{lemma}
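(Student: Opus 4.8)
The plan is to verify separately that $g:=\rho^{-2}(g_M+g_N)$ is Einstein and that it is complete, the substantive work having already been carried out in the preceding subsections. For the Einstein property I would invoke Proposition \ref{eif}: since $p=m+n>2$ and $(N,g_N)$ is Einstein with constant $\epsilon$, it is enough (by part 1 if $n>0$, part 2 if $n=0$) to check that $\rho$ satisfies \eqref{ce} on the interior $M$, equivalently that $v=(p-2)\log\rho$ satisfies \eqref{nnnce} there. On the open dense subset $\hat M=(0,t_\ast)\times P_q\subset M$ this is precisely Proposition \ref{loc}: under the ansatz $\hat g=dt^2+g_t$ equation \eqref{nnnce} reduces, via Lemmas \ref{nv}--\ref{hess}, to the ODE system \eqref{fi}--\eqref{th}, which the exact solution of \S\ref{es} (with constants fixed as in Proposition \ref{fes}) satisfies. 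As $g_{\bar M}$ is smooth on $\bar M$ and $\rho=\kappa_1 s$ is smooth and strictly positive on $M$, both sides of \eqref{ce} are continuous tensor fields on $M$; agreeing on the dense set $\hat M$, they agree on all of $M$ (one could also cite the real analyticity of Proposition \ref{reg}). Hence $g$ is Einstein, with constant $\lambda$ given by \eqref{ec}; one checks from \eqref{cal}--\eqref{cva} that $\lambda<0$, as expected of a conformally compact Einstein metric.

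For completeness, the first point is that it suffices to prove $(M,\rho^{-2}g_M)$ is complete, because $N$ is closed. Indeed $g=\rho^{-2}g_M+\rho^{-2}g_N\ge\pi_M^\ast(\rho^{-2}g_M)$, where $\pi_M:M\times N\rightarrow M$ is the projection, so $\pi_M$ is distance non-increasing as a map $(M\times N,g)\to(M,\rho^{-2}g_M)$. Therefore a $g$-bounded set $B\subset M\times N$ has $(\rho^{-2}g_M)$-bounded image $\pi_M(B)$, whose closure $K$ is compact once $(M,\rho^{-2}g_M)$ is complete; then $\overline B$ is a closed subset of the compact set $K\times N$, hence compact, and Hopf--Rinow gives completeness of $(M\times N,g)$.

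It remains to show $(M,\rho^{-2}g_M)$ is complete, which uses only that $\bar M$ is a compact manifold with boundary $\partial M$ and that $\rho$ is a defining function for $\partial M$, smooth on $\bar M$. Let $\gamma:[0,b)\to M$ be a divergent curve, i.e.\ one that eventually leaves every compact subset of $M$. For every $c>0$ the set $\{\rho\ge c\}$ is a compact subset of $M$, so $\rho(\gamma(t))\to0$ (while staying positive) as $t\nearrow b$, and hence $\log\rho(\gamma(t))\to-\infty$. Writing $C:=\sup_{\bar M}|d\rho|_{g_M}<\infty$ and using $|(\rho\circ\gamma)'|=|d\rho(\gamma')|\le C|\gamma'|_{g_M}$, the length of $\gamma$ with respect to $\rho^{-2}g_M$ satisfies
\begin{align*}
\int_0^b\rho^{-1}|\gamma'|_{g_M}\,dt\;\ge\;C^{-1}\int_0^b\big|(\log\rho\circ\gamma)'\big|\,dt\;=\;\infty .
\end{align*}
Thus every divergent curve has infinite length, so $(M,\rho^{-2}g_M)$ is complete, and with the previous paragraph this proves the lemma. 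The one genuinely non-formal step is this last completeness argument; but it is the standard fact that $\rho^{-2}g_M$ with $\rho$ a defining function on a compact manifold-with-boundary is complete, the only mild subtlety being that the sole non-compact ``direction'' of $M$ is the approach to $\partial M$, which is forced by compactness of $\bar M$.
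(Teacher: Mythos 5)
Your proof is correct. The Einstein part is the same as the paper's: both reduce to the fact that \eqref{ce} holds for $(M,g_M,\rho)$ and then cite Proposition \ref{eif}; you are merely more explicit about the density/continuity step that propagates \eqref{ce} from $\hat M$ to all of $M$, which the paper leaves implicit. For completeness the routes differ. The paper works in the explicit coordinates of \eqref{gee} and observes that the radial distance $\int_0^{s_\ast}(\rho\sqrt{\alpha})^{-1}\,d\tau$ diverges because $\rho\sqrt{\alpha}=O(s)$ (implicitly using that the $ds^2/(\rho^2\alpha)$ block of the metric bounds the length of any curve from below by the length of its radial projection). You instead prove the general statement that $\rho^{-2}g_M$ is complete whenever $\rho$ is a defining function on a compact manifold-with-boundary, via the estimate $|(\log\rho\circ\gamma)'|\le C\,|\gamma'|_{g_M}$ applied to an arbitrary divergent curve, and you handle the $N$-factor by noting that $\pi_M$ is distance non-increasing and $N$ is closed. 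Your argument is more robust (it does not use the special form of the metric, and it explicitly rules out all divergent curves rather than just the radial one), at the cost of being longer; the paper's computation is shorter and also records the quantitative rate $\rho\sqrt{\alpha}=O(s)$, which matches the asymptotically hyperbolic normalization used later. Both are valid proofs of the lemma.
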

\begin{proof}
Since \eqref{ce} holds true for the triple $(M,g_M,\rho)$ with $g_M=g_{\bar{M}}|_M$, it follows from Proposition \ref{eif} that $(M\times N,\rho^{-2}(g_M+g_N))$ is an Einstein manifold. Notice that
\begin{align}\label{gee}
\rho^{-2}(g_M+g_N)=\frac{ds^2}{\rho(s)^2\alpha(s)}+\frac{\alpha(s)}{\rho(s)^2}\theta\otimes\theta
+\sum_{i=1}^r\frac{\beta_i(s)}{\rho(s)^2}\hat{\pi}^\ast h_i+\rho(s)^{-2}g_N,
\end{align}
with $s\in(0,s_\ast]$. The geodesic distance given by
\begin{align*}
\int_0^{s_\ast}\frac{d\tau}{\rho(\tau)\sqrt{\alpha(\tau)}}
\end{align*}
is infinite because of $\rho(s)\sqrt{\alpha(s)}=O(s)$ as $s\searrow0$. We therefore conclude that the Einstein manifold $(M\times N,\rho^{-2}(g_M+g_N))$ is complete.
\end{proof}

In what follows, we shall normalize the Einstein metric $g=\rho^{-2}(g_M+g_N)$ such that its Einstein constant becomes $1-p$. By Proposition \ref{eif}, this is equivalent to set
\begin{align*}
\epsilon\rho^2-\rho\triangle_M\rho-(p-1)|d\rho|_M^2=1-p.
\end{align*}
We proceed to compute $\triangle_M\rho$ and $|d\rho|_M^2$.
\begin{lemma}
\begin{align}\label{lap}
-\triangle_M\rho=\kappa_1((p-1)\frac{\alpha}{s}-\epsilon s+\frac{\nu}{s}).
\end{align}
\end{lemma}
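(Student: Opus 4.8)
The plan is to compute $\triangle_M\rho$ directly on the open dense subset $\hat{M}=I\times P_q$, where $g_M=\hat{g}=dt^2+g_t$ and $\rho=\kappa_1 s$ is a function of the radial variable alone, and then to recognize the result by feeding in the linear first-order ODE \eqref{sie} satisfied by $\alpha$. Since $\triangle_M\rho$ is smooth on the interior, establishing the identity there suffices.

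First I would observe that because $\rho$ depends only on $t$, the very same computation that produced Lemma \ref{hess} gives the components of $D_Md\rho$ in the adapted frame $\{\partial_t,\Lambda,E_{ij}\}$, namely $D_Md\rho(\partial_t,\partial_t)=\ddot{\rho}$, $D_Md\rho(\Lambda,\Lambda)=\dot{\rho}f\dot{f}$ and $D_Md\rho(E_{ij},E_{ij})=\dot{\rho}g_i\dot{g_i}$. Tracing against $g_M$ in the orthonormal frame $\{\partial_t,f^{-1}\Lambda,g_i^{-1}E_{ij}\}$ (the $i$-th block contributing $2n_i$ copies of $g_i^{-2}D_Md\rho(E_{ij},E_{ij})$) and using that $\triangle_M$ is the positive Laplacian, so $-\triangle_M\rho=\mathrm{tr}_{g_M}D_Md\rho$, I get
\begin{align*}
-\triangle_M\rho=\ddot{\rho}+\dot{\rho}\,\frac{\dot{f}}{f}+\sum_{i=1}^r 2n_i\,\dot{\rho}\,\frac{\dot{g_i}}{g_i}.
\end{align*}

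Next I would pass to the moment-map coordinate through $\frac{d}{dt}=\sqrt{\alpha}\,\frac{d}{ds}$, together with $\rho=\kappa_1 s$, $f=\sqrt{\alpha}$ and $g_i=\sqrt{\beta_i}$. The chain rule yields $\dot{\rho}=\kappa_1\sqrt{\alpha}$, $\ddot{\rho}=\kappa_1\frac{\alpha'}{2}$, $\frac{\dot{f}}{f}=\frac{\alpha'}{2\sqrt{\alpha}}$ and $\frac{\dot{g_i}}{g_i}=\sqrt{\alpha}\,\frac{\beta_i'}{2\beta_i}$, so, recalling $w=\prod_{i=1}^r\beta_i^{n_i}$ from \eqref{w}, the displayed expression collapses to
\begin{align*}
-\triangle_M\rho=\kappa_1\bigl(\alpha'+\alpha\,(\log w)'\bigr).
\end{align*}
Finally I would substitute the linear equation \eqref{sie} for $\alpha$, specialized to $\kappa_0=0$, i.e.\ $\alpha'=\alpha\bigl(\tfrac{p-1}{s}-(\log w)'\bigr)-\epsilon s+\tfrac{\nu}{s}$; the $\alpha\,(\log w)'$ terms cancel and \eqref{lap} falls out immediately. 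There is no genuine obstacle here beyond bookkeeping; the only places that need care are the sign convention for the positive Laplacian and the factors of $\sqrt{\alpha}$ that appear when converting $t$-derivatives into $s$-derivatives.
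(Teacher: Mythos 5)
Your proposal is correct and follows essentially the same route as the paper: both trace the Hessian components of Lemma \ref{hess} in the adapted orthonormal frame to get $-\triangle_M\rho=\kappa_1(\alpha'+\alpha(\log w)')$ after converting $t$-derivatives to $s$-derivatives, and then substitute the linear equation \eqref{sie} with $\kappa_0=0$. The bookkeeping (signs for the positive Laplacian, the $\sqrt{\alpha}$ factors) all checks out.
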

\begin{proof}
Since $ds=fdt$, we have $\frac{d}{dt}=\frac{ds}{dt}\frac{d}{ds}=f\frac{d}{ds}$, and hence
\begin{eqnarray*}
&&\dot{f}=ff'=\frac{\alpha'}{2},\\
&&\dot{g_i}=fg_i'=\frac{f}{2g_i}\beta_i',\\
&&\dot{\rho}=f\rho'=\kappa_1 f,\\
&&\ddot{\rho}=f(\kappa_1 f)'=\frac{\kappa_1}{2}\alpha',
\end{eqnarray*}
where $\dot{}$ and $'$ denote respectively the differentiation with respect to $t$ and $s$. It thus follows from Lemma \ref{hess} that
\begin{eqnarray*}
-\triangle_M\rho&=&\ddot{\rho}+\frac{\dot{f}}{f}\dot{\rho}+\sum_{i=1}^r2n_i\frac{\dot{g_i}}{g_i}\dot{\rho}\\
&=&\kappa_1(\alpha'+\alpha\sum_{i=1}^rn_i\frac{\beta_i'}{\beta_i})\\
&=&\kappa_1(\alpha'+\alpha(\log w)')\\
&=&\kappa_1((p-1)\frac{\alpha}{s}-\epsilon s+\frac{\nu}{s}),
\end{eqnarray*}
where the last equality comes from \eqref{sie}.
\end{proof}
\begin{lemma}
\begin{align}\label{gra}
|d\rho|_M^2=\kappa_1^2\alpha.
\end{align}
\end{lemma}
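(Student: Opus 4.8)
The plan is to read off $|d\rho|_M^2$ directly from the explicit description of $g_M$ and $\rho$, with essentially no computation. Recall that $g_M=g_{\bar M}|_M$ coincides on the open dense subset $\hat M\subset M$ with $\hat g=dt^2+f(t)^2\,\theta\otimes\theta+\sum_{i=1}^r g_i(t)^2\,\hat\pi^\ast h_i$, and that $\rho=\kappa_1 s$ is a function of $t$ alone. First I would observe that, since $\partial_t$ is a unit vector field $\hat g$-orthogonal to all the remaining coordinate directions, the coframe element $dt$ satisfies $|dt|_{\hat g}^2=\hat g^{tt}=1$. Consequently, writing $d\rho=\dot\rho\,dt$, we get $|d\rho|_M^2=\dot\rho^2$ on $\hat M$.

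Next I would invoke the computation already carried out in the proof of the preceding lemma, namely $\dot\rho=f\rho'=\kappa_1 f$, together with the identity $f^2=\alpha$ recorded in Proposition \ref{loc}. This immediately yields $|d\rho|_M^2=\dot\rho^2=\kappa_1^2 f^2=\kappa_1^2\alpha$ on $\hat M$. Finally, since $\hat M$ is dense in $M$ and (by \S\ref{com}) the compactified metric $g_{\bar M}$ extends smoothly across $s=s_\ast$, both sides of the claimed identity are continuous on $M$, so the equality propagates to all of $M$. There is no genuine obstacle in this argument; the only point that deserves a moment's care is the passage from the chart $\hat M$, where the metric is written in explicit form, to the full manifold $M$, and this is dispatched by the continuity/density remark just made.
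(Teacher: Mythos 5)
Your argument is correct and is essentially the paper's own one-line computation $|d\rho|_M^2=\dot\rho^2=f^2(\rho')^2=\kappa_1^2\alpha$, just spelled out with the (harmless) extra remarks about $|dt|_{\hat g}^2=1$ and the density of $\hat M$ in $M$. Nothing to add.
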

\begin{proof}
Actually, $|d\rho|_M^2=\dot{\rho}^2=f^2(\rho')^2=\kappa_1^2\alpha$.
\end{proof}
\begin{proposition}
If $\kappa_1=\sqrt{\frac{1-p}{\nu}}$, then the Einstein metric $g$ has Einstein constant $1-p$.
\end{proposition}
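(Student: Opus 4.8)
The plan is to substitute the formulas we have already derived for $\triangle_M\rho$ and $|d\rho|_M^2$ into the normalization equation
\[
\epsilon\rho^2-\rho\triangle_M\rho-(p-1)|d\rho|_M^2=1-p,
\]
and verify that it holds identically in $s$ precisely when $\kappa_1^2=\frac{1-p}{\nu}$. Recall from Proposition \ref{eif} that the Einstein constant $\lambda$ of $g=\rho^{-2}(g_M+g_N)$ is given by \eqref{ec}, namely $\lambda=\epsilon\rho^2-\rho\triangle_M\rho-(p-1)|d\rho|_M^2$, so it suffices to evaluate the right-hand side of \eqref{ec} with the explicit data at hand and show it equals the constant $1-p$.

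Concretely, I would first use $\rho=\kappa_1 s$ together with \eqref{lap} to write
\[
-\rho\triangle_M\rho=\kappa_1 s\cdot\kappa_1\Bigl((p-1)\frac{\alpha}{s}-\epsilon s+\frac{\nu}{s}\Bigr)
=\kappa_1^2\bigl((p-1)\alpha-\epsilon s^2+\nu\bigr),
\]
and use \eqref{gra} to write $-(p-1)|d\rho|_M^2=-(p-1)\kappa_1^2\alpha$. Adding these two expressions, the terms $(p-1)\kappa_1^2\alpha$ cancel, leaving
\[
-\rho\triangle_M\rho-(p-1)|d\rho|_M^2=\kappa_1^2(\nu-\epsilon s^2).
\]
Finally $\epsilon\rho^2=\epsilon\kappa_1^2 s^2$, so
\[
\lambda=\epsilon\kappa_1^2 s^2+\kappa_1^2(\nu-\epsilon s^2)=\kappa_1^2\nu.
\]
Thus $\lambda$ is automatically constant (the $s$-dependence drops out, as it must since $g$ is Einstein by Lemma \ref{comei}), and setting $\kappa_1=\sqrt{\tfrac{1-p}{\nu}}$—which is legitimate since $\nu<0$ in all the cases of Proposition \ref{fes}, so $\tfrac{1-p}{\nu}>0$ as $p\ge4$—gives $\lambda=1-p$, as claimed.

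There is essentially no obstacle here; the only point requiring a word of care is that the expression for $\lambda$ must come out independent of $s$, and this is guaranteed in advance by the fact that $g$ is an Einstein metric (Lemma \ref{comei}), with the cancellation of the $\alpha$-terms making it manifest. One should also note that $\kappa_1>0$ is required for $\rho$ to be a valid defining function, and $\sqrt{\tfrac{1-p}{\nu}}$ is indeed positive because $\nu<0$; this is exactly where the sign condition $\nu<0$ established during the compactification analysis is used.
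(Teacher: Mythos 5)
Your proof is correct and is essentially identical to the paper's: both substitute $\rho=\kappa_1 s$ together with \eqref{lap} and \eqref{gra} into the expression \eqref{ec} for $\lambda$, observe that the $(p-1)\kappa_1^2\alpha$ terms cancel so that $\lambda=\kappa_1^2\nu$, and then solve $\kappa_1^2\nu=1-p$. The remarks about $s$-independence and the sign of $\nu$ are accurate but add nothing beyond the paper's argument.
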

\begin{proof}
Combining \eqref{lap} and \eqref{gra} leads to
\begin{align*}
\epsilon\rho^2-\rho\triangle_M\rho-(p-1)|d\rho|_M^2=\epsilon\kappa_1^2s^2+\kappa_1^2s((p-1)\frac{\alpha}{s}-\epsilon s+\frac{\nu}{s})-(p-1)\kappa_1^2\alpha=\kappa_1^2\nu.
\end{align*}
The normalization requires that $\kappa_1^2\nu=1-p$, i.e., $\kappa_1=\sqrt{\frac{1-p}{\nu}}>0$.
\end{proof}

\begin{proof}[Proof of Theorems \ref{positive} and \ref{zero}] Lemma \ref{comei} says that $(M\times N,g)$ is a complete Einstein manifold. Furthermore, the conformal deformation $(M\times N,g_M+g_N)$ has a smooth compact extension as $(\bar{M}\times N,g_{\bar{M}}+g_N)$. By definition, $(M\times N,g)$ is a conformally compact Einstein manifold. It turns out that there remains a unique free parameter $\nu$ after we take into account homothety. This completes the proof of Theorems \ref{positive} and \ref{zero}.
\end{proof}

\begin{remark}
The conformal infinity of $(M\times N,g)$ is $(\partial M\times N,[\gamma])$ with conformal metric
\begin{eqnarray}
\notag\gamma&=&\alpha(0)^{-1}(g_{\bar{M}}+g_N)|_{T(\partial M\times N)}\\
\notag&=&\theta\otimes\theta+\sum_{i=1}^r\frac{\beta_i(0)}{\alpha(0)}\hat{\pi}^\ast h_i+\alpha(0)^{-1}g_N\\
\label{conf}&=&\theta\otimes\theta+\sum_{i=1}^r\frac{(p-1)q_i^2}{4\nu A_i}\hat{\pi}^\ast h_i+\frac{1-p}{\nu}g_N
\end{eqnarray}
as $\alpha(0)=\frac{\nu}{1-p}$ and $\beta_i(0)=-\frac{q_i^2}{4A_i}$, $1\le i\le r$. In particular, $\gamma$ has constant scalar curvature.
\end{remark}

\subsection{non-Fano K\"{a}hler-Einstein bases}\label{nfk}

In the preceding constructions, it is possible to allow some of the base manifolds $(V_i,J_i,h_i)$, $2\le i\le r$, to be closed K\"{a}hler-Einstein manifolds with non-positive Einstein constants.

If, say, $(V_2,h_2)$ is a K\"{a}hler-Einstein manifold with negative Einstein constant, then we can assume the first Chern class $c_1(V_2)=p_2a_2$, where $p_2\in\mathbb{Z}_-$ is a negative integer, and $a_2\in H^2(V_2;\mathbb{Z})$ is an indivisible class. We shall normalize the K\"{a}hler-Einstein metric $h_2$ such that $\mbox{Ric}(h_2)=p_2h_2$, and hence the K\"{a}hler class $\eta_2=2\pi[a_2]$. We go through the previous constructions, and arrive at the consistency condition (cf. \eqref{csc})
\begin{align}\label{nfn}
\frac{\epsilon q_2^2}{4A_2^2}+\frac{p_2}{A_2}=\nu.
\end{align}
This forces $\epsilon$ to be negative since $p_2$, $A_2$ and $\nu$ are all negative. We can then rewrite \eqref{nfn} as a quadratic equation for $A_2$ as follows
\begin{align}\label{nfns}
A_2^2-\frac{p_2}{\nu}A_2-\frac{\epsilon q_2^2}{4\nu}=0.
\end{align}
The discriminant of \eqref{nfns} is $\Delta_2=(\frac{p_2}{\nu})^2+\frac{\epsilon q_2^2}{\nu}>0$. Hence \eqref{nfns} always has real roots. Furthermore, \eqref{nfns} has a unique negative root
\begin{align*}
A_2=\frac{1}{2}(\frac{p_2}{\nu}-\sqrt{\Delta_2})=\frac{1}{2\nu}(p_2+\sqrt{p_2^2+\nu\epsilon q_2^2}).
\end{align*}
The inequality \eqref{c3} is satisfied without any restriction in this case.

If, say, $(V_2,h_2)$ is a Ricci-flat K\"{a}hler manifold, i.e., $c_1(V_2)=0$, and hence $p_2=0$, then we need to assume the K\"{a}hler class $\eta_2=2\pi[a_2]$, where $a_2\in H^2(V_2;\mathbb{Z})$ is also an indivisible class, i.e., $(V_2,h_2)$ is a Hodge manifold. We go through again the above constructions, and arrive at the consistency condition
\begin{align}\label{nff}
\frac{\epsilon q_2^2}{4A_2^2}=\nu.
\end{align}
Thus $\epsilon<0$ as $\nu<0$. Clearly, \eqref{nff} has a unique negative root
\begin{align*}
A_2=-\sqrt{\frac{\epsilon q_2^2}{4\nu}}.
\end{align*}
The inequality \eqref{c3} is also satisfied without any restriction in this case.

We therefore have the following analogue of Proposition \ref{fes}.
\begin{proposition}\label{fesnp}
Let $\hat{M}^m=(0,s_\ast)\times P_q$, where $P_q$ is the principal circle bundle as in Theorem \ref{np} with principal connection $\theta$, and
\begin{align*}
\hat{g}=\alpha(s)^{-1}ds^2+\alpha(s)\theta\otimes\theta+\sum_{i=1}^r\beta_i(s)\hat{\pi}^\ast h_i.
\end{align*}
If $\epsilon<0$ and $p_1|q_i|>p_i$, $2\le i\le r$, then we can smoothly compactify $(\hat{M},\hat{g})$ at $s_\ast=-\frac{1}{2A_1}$, and extend it to $s=0$ provided $\alpha$ and $\beta_i$ are given by \eqref{cal} and \eqref{cbe} with $w=\prod_{i=1}^r\beta_i^{n_i}$, $m=2+\sum_{i=1}^r2n_i$, $p=m+n$, $p_1=n_1+1$, $|q_1|=1$, $\nu<0$ and $A_i=\frac{1}{2\nu}(p_i+\sqrt{p_i^2+\nu\epsilon q_i^2})$, $1\le i\le r$.
\end{proposition}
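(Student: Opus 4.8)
The plan is to re-run, step by step, the passage from Proposition~\ref{loc} to Proposition~\ref{fes} carried out in \S\ref{es} and \S\ref{com}, checking that none of it used positivity of the Einstein constants $p_2,\dots,p_r$. First I would note that the entire local analysis of \S\ref{sta} and \S\ref{es}---Lemma~\ref{nv}, the ODE system it produces, and hence the exact solutions \eqref{cal}, \eqref{cbe}, \eqref{cva} together with the consistency condition \eqref{csc}---uses only $\mbox{Ric}(h_i)=p_ih_i$ and the fact that the curvature form of $\theta$ is $\sum_i q_i\hat{\pi}^\ast\eta_i$ with $\eta_i=2\pi[a_i]$; the last relation holds for $p_i>0$ and for $p_i<0$ after normalizing $h_i$, and for $p_i=0$ it is exactly the Hodge hypothesis of Theorem~\ref{np}. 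So $(\hat{M},\hat{g},v)$ with $\hat{M}=(0,s_\ast)\times P_q$ and $\kappa_0=0$ is as in Proposition~\ref{loc}, and since $\epsilon<0$ the exceptional logarithmic alternative in Proposition~\ref{ral} cannot arise, so $\alpha$ is the rational function \eqref{alpq}, with $\alpha'(0)=0$ by Remark~\ref{ap}.

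Next I would solve \eqref{csc} for $A_i$. For every $i$ this is the quadratic \eqref{qu}, whose discriminant $\Delta_i=p_i^2/\nu^2+\epsilon q_i^2/\nu$ is positive because $\nu<0$ and $\epsilon<0$. Since $\sqrt{p_i^2+\nu\epsilon q_i^2}>|p_i|\ge -p_i$, the quantity $p_i+\sqrt{p_i^2+\nu\epsilon q_i^2}$ is positive, so $A_i=\frac{1}{2\nu}(p_i+\sqrt{p_i^2+\nu\epsilon q_i^2})<0$, and it is the unique negative root (the other root is positive); for $p_i=0$ it reduces to $-\sqrt{\epsilon q_i^2/(4\nu)}$, so the three sub-cases of \S\ref{nfk} are all covered by the single formula in the statement. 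From $A_i<0$ I get $\beta_i(0)=-q_i^2/(4A_i)>0$, and, $\beta_i$ being concave, $\beta_i>0$ on $(0,s_\ast)$ provided $\beta_i(s_\ast)\ge0$; moreover $\lim_{s\searrow0}\alpha(s)=\nu/(1-p)>0$ by the l'H\^{o}pital computation in \S\ref{com} (using $w(0)=\prod_i\beta_i(0)^{n_i}>0$). Hence the boundary metric $g_0$ is non-degenerate.

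Then I would treat the collapsing end $s_\ast=-1/(2A_1)>0$ (positive since $A_1<0$, $|q_1|=1$). Directly $\beta_1(s_\ast)=0$ and $\beta_1'(s_\ast)=2A_1s_\ast=-1$. For $2\le i\le r$, inequality \eqref{c3} ($\beta_i(s_\ast)>0$) holds with no restriction when $p_i\le0$ (\S\ref{nfk}) and is equivalent to $p_1|q_i|>p_i$ when $p_i>0$ (the $\epsilon<0$ computation in \S\ref{com}); since $p_1|q_i|>p_i$ is assumed for all $2\le i\le r$ and is automatic for $p_i\le0$, \eqref{c3} holds throughout, so each concave $\beta_i$ is positive on $(0,s_\ast)$ and $w>0$ on $[0,s_\ast)$. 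The analogue of Lemma~\ref{poi} holds: for $s\in[0,s_\ast]$ we have $\epsilon s^2-\nu\ge\epsilon s_\ast^2-\nu=\epsilon q_1^2/(4A_1^2)-\nu=-p_1/A_1>0$, using \eqref{csc} for $i=1$ together with $p_1=n_1+1>0$ and $A_1<0$; combined with $w>0$ on $[0,s_\ast)$ this gives $\alpha>0$ on $[0,s_\ast)$ exactly as in Proposition~\ref{poal}. Finally the l'H\^{o}pital arguments of Proposition~\ref{alc}, including the split $n_1=0$ versus $n_1>0$, use only $\beta_1=A_1(s^2-s_\ast^2)$, $\beta_1'(s_\ast)\ne0$ and $p_1=n_1+1$, so they still yield $\alpha(s_\ast)=0$ and $\alpha'(s_\ast)=-2$.

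Finally I would assemble the pieces: $\alpha$ is rational and smooth in a neighborhood of $[0,s_\ast]$ with $\alpha(s_\ast)=0$, $\alpha'(s_\ast)=-2$, $\alpha(0)>0$; $\beta_1$ is a polynomial with $\beta_1(s_\ast)=0$, $\beta_1'(s_\ast)=-1$, $\beta_1(0)>0$; $\beta_i$ ($2\le i\le r$) is a polynomial positive on $[0,s_\ast]$; and $\varphi$ is real-analytic. By the discussion at the beginning of \S\ref{com}, the oddness/evenness conditions along the singular orbit therefore hold to order $2$, so Proposition~\ref{reg} lets $(\hat{M},\hat{g},v)$ extend real-analytically across $s=s_\ast$, while the non-degeneracy and smoothness of $\alpha,\beta_i$ at $s=0$ let it extend to $s=0$ with boundary $\{0\}\times P_q$; this is exactly the assertion. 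The only genuinely new ingredient beyond \S\ref{es} and \S\ref{com} is the sign bookkeeping that singles out $A_i$ as the unique negative root and makes \eqref{c3} free of any topological restriction when $p_i\le0$; I expect that---together with checking that the logarithmic alternative of Proposition~\ref{ral} is killed off by $\epsilon<0$---to be the only point demanding care.
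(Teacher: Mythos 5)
Your proposal is correct and follows essentially the same route as the paper: the paper's proof of Proposition \ref{fesnp} is exactly the content of \S\ref{nfk} combined with the $\epsilon<0$ analysis of \S\ref{com}, namely re-running the construction and checking that the consistency condition \eqref{csc} has a unique negative root $A_i=\frac{1}{2\nu}(p_i+\sqrt{p_i^2+\nu\epsilon q_i^2})$ for every sign of $p_i$ and that \eqref{c3} reduces to $p_1|q_i|>p_i$ (automatic for $p_i\le0$). Your only additions are to make explicit what the paper leaves implicit — that the single root formula subsumes the separate equations \eqref{nfn} and \eqref{nff}, and that Lemma \ref{poi}, Propositions \ref{poal}, \ref{ral} and \ref{alc} carry over verbatim — all of which check out.
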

Theorem \ref{np} now follows from the same arguments as in \S\ref{geo}.

\subsection{Examples}\label{ex}

In the rest of this section, we shall give some concrete special cases of the conformally compact Einstein manifolds constructed in Theorems \ref{positive}, \ref{zero} and \ref{np}. For simplicity, we specialize to the situation where $N$ is a point or a circle, and the base of the principal circle bundle is the product of a point with a complex projective space of positive dimensions.

In the first two examples, let $V_1=\mathbb{C}P^{n_1}$, $n_1>0$, and $V_2$ be a point. Thus $M=B^{2n_1+2}$. Since $|q_1|=1$, the total space $P_{\pm1}$ is diffeomorphic to $S^{2n_1+1}$, and $\hat{\pi}:S^{2n_1+1}\rightarrow\mathbb{C}P^{n_1}$ is the Hopf fibration. In the last two examples, let $V_1$ be a point, and $V_2=\mathbb{C}P^{n_2}$, $n_2>0$. Thus $M^{2n_2+2}$ is the total space of a nontrivial complex one-disc bundle over $\mathbb{C}P^{n_2}$.

\subsubsection{$n=0$, $n_1>0$, $n_2=0$}\label{hyp}

In this case, $N$ is a point. So is the event horizon. Since $p=2+2n_1=2p_1$, the conformal infinities are represented by $(2n_1+1)$-dimensional squashed spheres with conformal metrics (cf. \eqref{conf})
\begin{align*}
\gamma=\theta\otimes\theta+\frac{2p_1-1}{4\nu A_1}\hat{\pi}^\ast h_1.
\end{align*}
This family of conformally compact Einstein metrics generalize to higher dimensions the $4$-dimensional AdS-Taub-NUT metrics (corresponding to $n_1=1$) \cite{HawHunPag99, Ped86}.

By Proposition \ref{fes}, $A_1=\frac{1}{2\nu}(p_1+\sqrt{p_1^2+\nu\epsilon})$ with $\nu\epsilon\ge-p_1^2$, or $A_1=\frac{1}{2\nu}(p_1-\sqrt{p_1^2+\nu\epsilon})$ with $-p_1^2\le\nu\epsilon<0$. In the former case, when $\nu\epsilon=1-2p_1$ and $A_1=\frac{1}{2\nu}(2p_1-1)$,
\begin{align*}
\gamma=\theta\otimes\theta+\frac{1}{2}\hat{\pi}^\ast h_1
\end{align*}
is the standard metric on $S^{2n_1+1}(1)$ of constant curvature $1$. A further computation then shows $\alpha=2\beta_1$. Thus the corresponding conformally compact Einstein metric is (cf. \eqref{gee})
\begin{align*}
g=\frac{ds^2}{\rho(s)^2\alpha(s)}+\frac{\alpha(s)}{\rho(s)^2}(\theta\otimes\theta+\frac{1}{2}\hat{\pi}^\ast h_1),
\end{align*}
which leads to hyperbolic space $H^{2n_1+2}$ of constant curvature $-1$.

\subsubsection{$n=1$, $n_1>0$, $n_2=0$}\label{quo}

In this case, $N$ is a circle. So is the event horizon. Since $p=3+2n_1=1+2p_1$, $\epsilon=0$ and $A_1=\frac{p_1}{\nu}$, the conformal infinity is represented by the conformal metric
\begin{align*}
\gamma=\theta\otimes\theta+\frac{1}{2}\hat{\pi}^\ast h_1-\frac{2p_1}{\nu}g_{S^1},
\end{align*}
which is the product metric on $S^{2n_1+1}(1)\times S^1$. A further computation then shows $\alpha=2\beta_1$ as well. Thus the corresponding conformally compact Einstein metric is
\begin{align*}
g=\frac{ds^2}{\rho(s)^2\alpha(s)}+\frac{\alpha(s)}{\rho(s)^2}(\theta\otimes\theta+\frac{1}{2}\hat{\pi}^\ast h_1)+\rho(s)^{-2}g_{S^1},
\end{align*}
which leads to the quotient of a certain portion of the Euclidean AdS space \cite{HawPag82, Wit98}.

\subsubsection{$n=0$, $n_1=0$, $n_2>0$}

In this case, $N$ is a point, and the event horizon is $\mathbb{C}P^{n_2}$. Since $p=2+2n_2=2p_2$, the conformal infinities are represented by $(2n_2+1)$-dimensional squashed spheres with conformal metrics
\begin{align*}
\gamma=\theta\otimes\theta+\frac{(2p_2-1)q_2^2}{4\nu A_2}\hat{\pi}^\ast h_2.
\end{align*}
This family of conformally compact Einstein metrics generalize to higher dimensions the $4$-dimensional AdS-Taub-Bolt metrics (corresponding to $n_2=1$) \cite{HawHunPag99}.

Since $\phi=\min\{1,p_2^2q_2^{-2}\}$ as $p_1=|q_1|=1$, there are three distinct cases.

If $|q_2|>p_2$, then $\phi=p_2^2q_2^{-2}<1$, and either $A_i=\frac{1}{2\nu}(p_i+\sqrt{p_i^2+\nu\epsilon q_i^2})$, $1\le i\le2$, with $\nu\epsilon\ge-p_2^2q_2^{-2}$, or $A_1=\frac{1}{2\nu}(p_1+\sqrt{p_1^2+\nu\epsilon q_1^2})$ and $A_2=\frac{1}{2\nu}(p_2-\sqrt{p_2^2+\nu\epsilon q_2^2})$, with $-p_2^2q_2^{-2}\le\nu\epsilon<0$. In the former case, when $\nu\epsilon=(1-2p_2)q_2^{-2}$ and $A_2=\frac{1}{2\nu}(2p_2-1)$,
\begin{align*}
q_2^{-2}\gamma=q_2^{-2}\theta\otimes\theta+\frac{1}{2}\hat{\pi}^\ast h_2
\end{align*}
is the induced metric on the quotient $L^{n_2}_{q_2}$ of $S^{2n_2+1}(1)$ by a cyclic group of order $|q_2|$.

If $|q_2|=p_2$, then $\phi=1$, $A_1=\frac{1}{2\nu}(p_1+\sqrt{p_1^2+\nu\epsilon q_1^2})$ and $A_2=\frac{1}{2\nu}(p_2-\sqrt{p_2^2+\nu\epsilon q_2^2})$, with $-1<\nu\epsilon<0$.

If $|q_2|<p_2$, then $\phi=1$, $A_1=\frac{1}{2\nu}(p_1\pm\sqrt{p_1^2+\nu\epsilon q_1^2})$ and $A_2=\frac{1}{2\nu}(p_2-\sqrt{p_2^2+\nu\epsilon q_2^2})$, with $-1\le\nu\epsilon<0$. For each pair of parameters $(\epsilon,\nu)$, there exist two non-isometric conformally compact Einstein metrics with the same conformal infinity. Such a non-uniqueness phenomena has been observed first by Hawking and Page \cite{HawPag82}. More examples of non-uniqueness can be found in \cite{And05}.

\subsubsection{$n=1$, $n_1=0$, $n_2>0$}\label{cir}

In this case, $N$ is a circle, and the event horizon is $\mathbb{C}P^{n_2}\times S^1$. Since $p=3+2n_2=1+2p_2$, $\epsilon=0$ and $A_2=\frac{p_2}{\nu}$, the conformal infinity is represented by the conformal metric
\begin{align*}
q_2^{-2}\gamma=q_2^{-2}\theta\otimes\theta+\frac{1}{2}\hat{\pi}^\ast h_2-\frac{2p_2}{\nu q_2^2}g_{S^1}
\end{align*}
with $|q_2|>p_2$, which is the product metric on $L^{n_2}_{q_2}\times S^1$. We should mention that there is another conformally compact Einstein manifold $K^{n_2}_{q_2}$ (cf., for instance, \cite[Example 2.1]{And05}), which also has $(L^{n_2}_{q_2}\times S^1, [q_2^{-2}\gamma])$ as the conformal infinity. The relation of our example and $K^{n_2}_{q_2}$ is reminiscent of that of Example \ref{quo} and the Euclidean AdS Schwarzschild black hole discussed in \cite[\S3]{Wit98} (cf. also \cite{HawPag82}).
\begin{remark}
If we replace $\mathbb{C}P^{n_2}$ by a general Fano K\"{a}hler-Einstein manifold $(V_2^{n_2},h_2)$ of positive dimensions, then the conformal infinity is represented by the product $P_{q_2}\times S^1$, where $P_{q_2}$ is a principal circle bundle over $V_2$, with conformal metric
\begin{eqnarray*}
\notag\gamma=\theta\otimes\theta+\frac{(n_2+1)q_2^2}{2p_2}\hat{\pi}^\ast h_2-\frac{2(n_2+1)}{\nu}g_{S^1}.
\end{eqnarray*}
Notice that $\theta\otimes\theta+\frac{(n_2+1)q_2^2}{2p_2}\hat{\pi}^\ast h_2$ is a positive Einstein metric on $P_{q_2}$ (cf. Lemma \ref{nv}). Thus there is again another conformally compact Einstein manifold (cf. \cite[Example 2.1]{And05}), which also has $(P_{q_2}\times S^1,[\gamma])$ as the conformal infinity.
\end{remark}

\section{Conformal Invariants}\label{ci}

In this section, we shall compute the conformal invariants, i.e., the renormalized volume in even dimensions and the conformal anomaly in odd dimensions, associated to the conformally compact Einstein manifolds $(M^m\times N^n,\rho^{-2}(g_M+g_N))$ constructed in \S\ref{constr}.

\subsection{Geodesic defining functions}\label{gdf}

Recall that the conformally compact Einstein metric $g=\rho^{-2}(g_M+g_N)$ constructed in \S\ref{constr} is of the form given by \eqref{gee}. It turns out that $\rho$ is not a geodesic defining function (cf. \eqref{gc}). In order to find a geodesic defining function $\sigma$, we are led to a first-order ordinary differential equation
\begin{align}\label{fd}
\frac{d\sigma}{\sigma}=\frac{ds}{\rho\sqrt{\alpha}}.
\end{align}
\begin{proposition}\label{ggdf}
Let
\begin{eqnarray*}
\zeta(s)=\int_0^s(\frac{1}{\kappa_1\sqrt{\alpha(\tau)}}-1)\frac{d\tau}{\tau},&&0\le s\le s_\ast.
\end{eqnarray*}
Then $\sigma=s\exp\zeta$ satisfies \eqref{fd}, and is a global geodesic defining function.
\end{proposition}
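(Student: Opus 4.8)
The plan is to verify directly that the proposed $\sigma = s\exp\zeta(s)$ solves the ODE \eqref{fd}, then to check the two properties that make it a legitimate geodesic defining function: smoothness (indeed positivity of the relevant logarithmic derivative so that $\sigma$ is a genuine reparametrization) up to the boundary $s=0$, and the normalization $|d\sigma|_{\sigma^2 g}=1$ on $\partial(M\times N)$ that characterizes a geodesic defining function in the sense of \eqref{gc}. First I would compute $\frac{d\sigma}{\sigma} = \frac{ds}{s} + d\zeta = \frac{ds}{s} + \left(\frac{1}{\kappa_1\sqrt{\alpha}} - 1\right)\frac{ds}{s} = \frac{ds}{\kappa_1 s\sqrt{\alpha}}$, and since $\rho = \kappa_1 s$ this is exactly $\frac{ds}{\rho\sqrt{\alpha}}$, so \eqref{fd} holds by construction. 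Conversely any solution of \eqref{fd} differs from this one by a multiplicative constant, which is the usual normalization freedom.

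Next I would address regularity of $\zeta$ near $s=0$. The integrand $\left(\frac{1}{\kappa_1\sqrt{\alpha(\tau)}} - 1\right)\frac{1}{\tau}$ is a priori singular at $\tau=0$, so the point is that the factor in parentheses vanishes to first order in $\tau$. Since $\alpha$ is rational (Proposition \ref{ral}) with $\alpha(0) = \frac{\nu}{1-p} = \kappa_1^{-2}$ by our normalization $\kappa_1 = \sqrt{(1-p)/\nu}$, and $\alpha'(0)=0$ by Remark \ref{ap}, we get $\kappa_1^{-1}\alpha(s)^{-1/2} = 1 + O(s^2)$ as $s\searrow 0$; hence the integrand is $O(s)$, the integral $\zeta(s)$ converges and is in fact an analytic (even) function of $s$ near $0$ with $\zeta(0)=0$. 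Therefore $\sigma = s\exp\zeta$ is smooth up to $s=0$, vanishes there, and has $\frac{d\sigma}{ds}\big|_{s=0} = \exp\zeta(0) = 1 \neq 0$, so $\sigma$ is a valid defining function and, being a positive multiple of $s$ times a positive function, defines the same boundary. One should also note $\sigma$ is monotincreasing in $s$ on $(0,s_\ast)$ since $\frac{d\sigma}{\sigma} = \frac{ds}{\rho\sqrt\alpha} > 0$ by Proposition \ref{poal}, so it is a genuine change of variable.

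Finally, to confirm $\sigma$ is the geodesic defining function I would rewrite $g$ in the form \eqref{gc}. From \eqref{fd}, $\frac{d\sigma^2}{\sigma^2} = \frac{ds^2}{\rho^2\alpha}$, which is precisely the first term of $g$ in \eqref{gee}; thus $g = \sigma^{-2}\left(d\sigma^2 + g_\sigma\right)$ where $g_\sigma = \sigma^2\left(\frac{\alpha}{\rho^2}\theta\otimes\theta + \sum_i \frac{\beta_i}{\rho^2}\hat\pi^\ast h_i + \rho^{-2}g_N\right)$ is a well-defined family of metrics on the boundary (here $\sigma/\rho = \exp\zeta/\kappa_1 \to 1/(\kappa_1\cdot\kappa_1^{-1}\cdot\dots)$ is bounded and smooth up to $s=0$, using $\zeta(0)=0$). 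This is exactly the normal form that characterizes a geodesic defining function by \cite[Lemma 2.1]{Gra00}, so $\sigma$ is global geodesic. The main obstacle is the regularity claim at $s=0$: one must be careful that $\alpha'(0)=0$ — which is where Remark \ref{ap} (requiring $p\geq 4$) enters — otherwise the integrand would only be $O(1)$ and $\zeta$ would have a logarithmic singularity, breaking smoothness of $\sigma$; everything else is a direct substitution.
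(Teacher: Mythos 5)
Your verification that $\sigma=s\exp\zeta$ formally satisfies \eqref{fd}, and your treatment of the endpoint $s=0$ (the integrand is $O(\tau)$ because $\alpha(0)=\kappa_1^{-2}$ and $\alpha'(0)=0$ from Remark \ref{ap}), agree with the paper. But there is a genuine gap: you never address the other endpoint $s=s_\ast$, and that is where most of the actual work in the paper's proof lies. The proposition asserts that $\zeta$ is defined for $0\le s\le s_\ast$ and that $\sigma$ is a \emph{global} geodesic defining function, i.e.\ defined and smooth on all of $\bar{M}\times N$ including the event horizon at $s=s_\ast$, not merely on a collar of the conformal boundary (which is all that \cite[Lemma 2.1]{Gra00} guarantees in general). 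By Proposition \ref{alc}, $\alpha(s_\ast)=0$ with $\alpha'(s_\ast)=-2$, so the integrand of $\zeta$ blows up like $(s_\ast-\tau)^{-1/2}$ as $\tau\nearrow s_\ast$. One must first note that this singularity is integrable, so that $\zeta(s_\ast)$ is finite; your closing remark that everything away from $s=0$ ``is a direct substitution'' passes over this.

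More seriously, finiteness is not enough: as a function of $s$, $\zeta(s)=\zeta(s_\ast)+O(\sqrt{s_\ast-s})$ near $s_\ast$, which is continuous but \emph{not} smooth in $s$. The resolution in the paper is that $s$ is not a smooth coordinate at the event horizon; the smooth structure there is carried by the coordinate $t$ (together with the blow-down of the Hopf fibration), and one computes $\frac{d\zeta}{dt}=\frac{1}{s(t)}\bigl(\frac{1}{\kappa_1}-f(t)\bigr)$, which is manifestly smooth at $t_\ast$ since $f$ is smooth and odd there and $s(t_\ast)=s_\ast>0$. Without this step you have not shown that $\sigma$ extends smoothly across the event horizon, so the word ``global'' in the statement is not justified by your argument. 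A small additional inaccuracy: $\zeta$ need not be an even function of $s$ near $0$, since $\alpha$ is only even up to order $p-2$ (Lemma \ref{odav}); this does not affect your argument at $s=0$, but the parenthetical ``(even)'' should be dropped.
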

\begin{proof}
First of all, we need to show that $\zeta$ as above is well-defined on $[0,s_\ast]$. Notice that $\kappa_1=\sqrt{\frac{1-p}{\nu}}=\frac{1}{\sqrt{\alpha(0)}}$. It follows from l'H\^{o}pital's rule that (cf. Remark \ref{ap})
\begin{align*}
\lim_{s\searrow0}(\frac{1}{\kappa_1\sqrt{\alpha(s)}}-1)/s=-\frac{\alpha'(0)}{2\alpha(0)}=0.
\end{align*}
Thus $\zeta(0)=0$, and $\zeta$ is well-defined in $(0,s_\ast)$. Around $s_\ast$ where $\alpha(s_\ast)=0$ and $\alpha'(s_\ast)=-2$ (cf. Proposition \ref{alc}), we have
\begin{align*}
\alpha(s)=-2(s-s_\ast)+o(s-s_\ast)=O(s-s_\ast).
\end{align*}
Hence $\zeta$ has a finite limit when $s$ increases to $s_\ast$, i.e., $\zeta$ is also well-defined at $s_\ast$.

It is clear that
\begin{eqnarray}\label{zet}
\zeta'=\frac{1}{s}(\frac{1}{\kappa_1\sqrt{\alpha}}-1),&&0<s<s_\ast,
\end{eqnarray}
from which we see that $\zeta$ is smooth on $[0,s_\ast)$. It remains to check the smoothness of $\zeta$ at $s_\ast$. To do this, we return to the coordinate $t$. By \eqref{zet}, we have
\begin{align*}
\frac{d\zeta}{dt}=\frac{d\zeta}{ds}\frac{ds}{dt}=\frac{1}{s}(\frac{1}{\kappa_1}-\sqrt{\alpha})=\frac{1}{s(t)}(\frac{1}{\kappa_1}-f(t)),
\end{align*}
from which we see that $\zeta$ is smooth at $t_\ast$ as well.

Finally, it is easy to check that $\sigma=s\exp\zeta$ satisfies \eqref{fd}, and is thus a global geodesic defining function. This completes the proof of Proposition \ref{ggdf}.
\end{proof}
It follows from \eqref{fd} that
\begin{eqnarray*}
\frac{d\sigma}{ds}=\frac{\sigma}{\rho\sqrt{\alpha}}>0,&&0<s<s_\ast.
\end{eqnarray*}
The inverse function theorem asserts that $s$ can be written as a function of $\sigma$ on $(0,\sigma_\ast)$, with $\sigma_\ast=\sigma(s_\ast)$, say $s=\sigma\xi$, where $\xi=\xi(\sigma)$ is a smooth function of $\sigma$ such that $\xi(0)=1$ as $\zeta(0)=0$. It turns out that $\xi$ satisfies the ordinary differential equation
\begin{align}\label{cp}
\sigma\frac{d\xi}{d\sigma}=\xi(\kappa_1\sqrt{\alpha}-1).
\end{align}

\subsection{Asymptotic volumes}\label{aav}

In terms of the geodesic defining function $\sigma$, we can rewrite our Einstein metric as
\begin{align}\label{gm}
g=\frac{d\sigma^2}{\sigma^2}+E(\sigma)^2\theta\otimes\theta+\sum_{i=1}^rF_i(\sigma)^2\hat{\pi}^\ast h_i+G(\sigma)^2g_N
\end{align}
with $E(\sigma)=\rho(s)^{-1}\sqrt{\alpha(s)}$, $F_i(\sigma)=\rho(s)^{-1}\sqrt{\beta_i(s)}$ and $G(\sigma)=\rho(s)^{-1}$. By \eqref{fd} and \eqref{cp}, the corresponding asymptotic volume is
\begin{eqnarray*}
\mbox{Vol}_g(\{\sigma>\delta\})&=&C_3\int_\delta^{\sigma_\ast}E\prod_{i=1}^rF_i^{2n_i}G^n\frac{d\sigma}{\sigma}\\
&=&C_3\int_{s(\delta)}^{s_\ast}\rho^{-1}\sqrt{\alpha}\prod_{i=1}^r(\rho^{-1}\sqrt{\beta_i})^{2n_i}
\rho^{-n}\frac{ds}{\rho\sqrt{\alpha}}\\
&=&C_3\int_{s(\delta)}^{s_\ast}\frac{w}{\rho^p}ds\\
&=&\frac{C_1C_3}{\kappa_1^p}\sum_{j=0}^{m/2-1}a_j\int_{s(\delta)}^{s_\ast} s^{2j-p}ds,
\end{eqnarray*}
where $C_3=2\pi\mbox{Vol}_{g_N}(N)\prod_{i=1}^r\mbox{Vol}_{h_i}(V_i)$, $C_1$ and $a_j$'s are given in the proof of Proposition \ref{ral} (cf. \eqref{aj} for example). Since
\begin{eqnarray*}
2j-p+1\le2(\frac{m}{2}-1)-(m+n)+1=-1-n<0,&&0\le j\le\frac{m}{2}-1,
\end{eqnarray*}
no $\log s$ term appears after integration. Therefore,
\begin{eqnarray}\label{asy}
\notag\mbox{Vol}_g(\{\sigma>\delta\})
&=&\frac{C_1C_3}{\kappa_1^p}\sum_{j=0}^{m/2-1}\frac{a_j}{2j-p+1}s^{2j-p+1}|_{s(\delta)}^{s_\ast}\\
\label{av}&=&\frac{C_1C_3}{\kappa_1^p}\sum_{j=0}^{m/2-1}\frac{a_j}{2j-p+1}(s_\ast^{2j-p+1}-\delta^{2j-p+1}\xi(\delta)^{2j-p+1}).
\end{eqnarray}

\subsection{Conformal anomalies}\label{ca}

Since $\xi(\delta)$ is a smooth function of $\delta$, no $\log\delta$ term appears in the asymptotic expansion \eqref{av} of $\mbox{Vol}_g(\{r>\delta\})$. Hence, when $p$ is odd, the conformal anomaly $L$ is always zero.

Recall that $L$ equals a constant multiple of the integral of $Q$-curvature on the conformal infinity (cf. \eqref{confa}). One may thus ask whether some representative metric in the conformal infinity indeed has vanishing $Q$-curvature. This is answered by Theorem \ref{vca}.

\begin{proof}[Proof of Theorem \ref{vca}]
The theorem clearly follows if there is a representative metric with constant $Q$-curvature.

Consider the restriction $g_b$ of $\sigma^2g$ to the boundary $\partial M\times N$. Since
\begin{align*}
\lim_{s\searrow0}\frac{\sigma^2}{\rho(s)^2}=\lim_{s\searrow0}\kappa_1^{-2}(\exp\zeta(s))^2=\kappa_1^{-2},
\end{align*}
it follows from \eqref{gm} that
\begin{align}\label{can}
g_b=\kappa_1^{-2}(\alpha(0)\theta\otimes\theta+\sum_{i=1}^r\beta_i(0)\hat{\pi}^\ast h_i+g_N).
\end{align}

We invoke now Theorem 3.1 in \cite{FefGra02}, which suggests that we find the unique solution $\tilde{U}$ mod $O(\sigma^{p-1})$ of the Poisson equation
\begin{align*}
\triangle_g\tilde{U}=p-1+O(\sigma^p\log\sigma)
\end{align*}
of the form
\begin{align*}
\tilde{U}=\log\sigma +A+B\sigma^{p-1}\log\sigma+O(\sigma^{p-1}),
\end{align*}
with $A,B\in C^\infty(M\times N)$ and $A|_{\partial M\times N}=0$. Then $B|_{\partial M\times N}$ will be a constant multiple of the $Q$-curvature of $g_b$.

For convenience, we define $\iota=\log\sigma$, i.e., $\sigma=\exp\iota$, and write our Einstein metric as
\begin{align*}
g=d\iota^2+\bar{E}(\iota)^2\theta\otimes\theta+\sum_{i=1}^r\bar{F}_i(\iota)^2\pi^\ast h_i+\bar{G}(\iota)^2g_N,
\end{align*}
with $\bar{E}(\iota)=E(\sigma)$, $\bar{F}_i(\iota)=F_i(\sigma)$ and $\bar{G}(\iota)=G(\sigma)$. It thus follows from Lemma \ref{hess} that
\begin{align*}
\triangle_g\iota=\frac{1}{\bar{E}}\frac{d\bar{E}}{d\iota}+\sum_{i=1}^r\frac{2n_i}{\bar{F}_i}\frac{d\bar{F}_i}{d\iota}
+\frac{n}{\bar{G}}\frac{d\bar{G}}{d\iota}
=\frac{d}{d\iota}\log(\bar{E}\prod_{i=1}^r\bar{F}_i^{2n_i}\bar{G}^n),
\end{align*}
which implies that $\triangle_g\iota$ depends on $\iota$ only. In other words, $\triangle_g\log\sigma$ is a function of $\sigma$ only.

As in the proof of Theorem 3.1 in \cite{FefGra02}, we shall first construct $A$ mod $O(\sigma^{p-1})$ of the form $\sum_{i=2}^{p-2}c_i\sigma^i$ with constants $c_i$ such that
\begin{align*}
\triangle_gA=p-1-\triangle_g\log\sigma+\sigma^{p-1}\tilde{W},
\end{align*}
where the remainder $\tilde{W}$ turns out to be a function of $\sigma$ only. Secondly, we can determine formally $B=B(\sigma)$, which satisfies the boundary condition $B|_{\partial M\times N}=-(p-1)^{-1}\tilde{W}(0)$. It then follows from this boundary condition that the $Q$-curvature of $g_b$ is a constant.
\end{proof}

\begin{remark}
Recall that the $Q$-curvature in dimension $4$ is given by a simple formula \eqref{qcur}. It is therefore easy to check that the representative metrics $g_b$ (cf. \eqref{can}) in the conformal infinities of the $5$-dimensional conformally compact Einstein manifolds from Examples \ref{quo} and \ref{cir} have indeed vanishing $Q$-curvature (cf. Lemma \ref{nv}).
\end{remark}

\begin{proof}[Proof of Corollary \ref{rpzq}]
The first part of Corollary \ref{rpzq} is an immediate consequence of Theorem \ref{vca}. Hence we need only to prove the second part of the corollary, i.e., the one-parameter family of squashed spheres contain the standard sphere of constant curvature $1$ provided $(N^n,g_N)$ has Einstein constant $-(n-1)$.

We now specialize to the case $n_1>0$, $n_2=0$ and $r=2$. Since $|q_1|=1$, $p_1=n_1+1$ and $p=n+2n_1+2=n+2p_1$, where $n$ is an odd positive integer, the conformal infinities are represented by $S^{2n_1+1}\times N$ with conformal metrics (cf. \eqref{conf})
\begin{align*}
\gamma=\theta\otimes\theta+\frac{n+2p_1-1}{4\nu A_1}\hat{\pi}^\ast h_1+\frac{1-n-2p_1}{\nu}g_N.
\end{align*}
We shall divide our discussion into two cases $\epsilon=0$ and $\epsilon<0$.

When $\epsilon=0$, it follows from Proposition \ref{fes} that $\nu<0$ and $A_1=\frac{p_1}{\nu}$. Thus
\begin{align*}
\frac{n+2p_1-1}{4\nu A_1}=\frac{1}{2}+\frac{n-1}{4p_1}\ge\frac{1}{2}
\end{align*}
with equality iff $n=1$, i.e., $N=S^1$. In this case, if we choose $\nu=-2p_1$, then
\begin{align*}
\gamma=\theta\otimes\theta+\frac{1}{2}\hat{\pi}^\ast h_1+g_{S^1}
\end{align*}
is the product metric on $S^{2n_1+1}(1)\times S^1$, which has zero $Q$-curvature by Theorem \ref{vca}.

When $\epsilon<0$, and hence $n>1$, we have $\nu<0$ and $A_1=\frac{1}{2\nu}(p_1+\sqrt{p_1^2+\nu\epsilon})$. Thus
\begin{align*}
\frac{n+2p_1-1}{4\nu A_1}=\frac{n+2p_1-1}{2(p_1+\sqrt{p_1^2+\nu\epsilon})},
\end{align*}
which is equal to $\frac{1}{2}$ iff
\begin{align*}
\nu=\frac{(n-1)(2p_1+n-1)}{\epsilon}.
\end{align*}
In this case, if we normalize the Einstein metric $g_N$ such that $\epsilon=-(n-1)$, then
\begin{align*}
\gamma=\theta\otimes\theta+\frac{1}{2}\hat{\pi}^\ast h_1+g_{N}
\end{align*}
is the product metric on $S^{2n_1+1}(1)\times N$, which also has zero $Q$-curvature by Theorem \ref{vca}.
\end{proof}
\begin{remark}
When $\epsilon>0$, and therefore $n>1$, it follows from Proposition \ref{fes} that $\nu<0$ and $A_1=\frac{1}{2\nu}(p_1\pm\sqrt{p_1^2+\nu\epsilon})>\frac{p_1}{\nu}$, i.e., $0<\nu A_1<p_1$. Thus
\begin{align*}
\frac{n+2p_1-1}{4\nu A_1}>\frac{n+2p_1-1}{4p_1}>\frac{1}{2}.
\end{align*}
In this case, $\gamma$ cannot be the product metric on $S^{2n_1+1}(1)\times N$.
\end{remark}

\subsection{Renormalized volumes}\label{rv}

In the rest of this section, we shall assume $p$ is even, and compute the renormalized volume, i.e., the constant term in the asymptotic expansion \eqref{rv} of $\mbox{Vol}_g(\{\sigma>\delta\})$. More precisely, we will determine the coefficient of $\delta^{p-1-2j}$ term in the Maclaurin series of $\xi(\delta)^{2j-p+1}$, i.e., $\frac{1}{(p-1-2j)!}\frac{d^{p-1-2j}}{d\sigma^{p-1-2j}}(\xi(\sigma)^{2j-p+1})(0)$, $0\le j\le\frac{m}{2}-1$. We proceed to establish several lemmas.
\begin{lemma}\label{odav}
$\alpha(s)$ is even at $s=0$ up to order $p-2$, i.e., $\frac{d^{2j+1}\alpha}{ds^{2j+1}}(0)=0$, $j=0,\cdots,\frac{p}{2}-2$.
\end{lemma}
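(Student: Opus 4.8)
The plan is to read the conclusion off the explicit rational expression for $\alpha$ established in Proposition \ref{ral}, paying attention to parity. First I would record a preliminary reduction: since $m=2+\sum_{i=1}^r2n_i$ is even and $p$ is assumed even, the integer $n=p-m$ is even; in particular $n\ne1$, so by the case analysis in the proof of Proposition \ref{ral} the exceptional logarithmic situation never arises and formula \eqref{alpq} applies as stated, namely $\alpha(s)=P(s)/Q(s)$ with $Q(s)=C_1^{-1}w(s)=\sum_{j=0}^{m/2-1}a_js^{2j}$ and $P(s)=C_2s^{p-1}-\sum_{j=0}^{m/2}\frac{b_j}{2j-p+1}s^{2j}$.

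Next I would isolate the relevant structural facts. The polynomial $Q$ is a polynomial in $s^2$ with $Q(0)=a_0\ne0$, so $s\mapsto1/Q(s)$ is a smooth (indeed real-analytic) even function on a neighbourhood of $s=0$. The numerator splits as $P=\widetilde P+C_2s^{p-1}$, where $\widetilde P(s)=-\sum_{j=0}^{m/2}\frac{b_j}{2j-p+1}s^{2j}$ is again a polynomial in $s^2$ (note $2j-p+1\ne0$ for all admissible $j$, once more because $n\ne1$), and where $p-1$ is odd. Accordingly
\[
\alpha(s)=\frac{\widetilde P(s)}{Q(s)}+C_2\,\frac{s^{p-1}}{Q(s)}
\]
near $s=0$.

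The two summands are then handled separately. The first, $\widetilde P/Q$, is a quotient of two even functions, hence even and smooth near $0$, so all of its odd-order derivatives vanish at $s=0$. The second is $s^{p-1}$ times the smooth function $C_2/Q$, hence its Taylor expansion at $0$ begins in degree $p-1$; consequently every derivative of it of order at most $p-2$ vanishes at $s=0$. Adding the two contributions, $\frac{d^{2j+1}\alpha}{ds^{2j+1}}(0)=0$ whenever $2j+1\le p-2$, i.e. for $j=0,1,\dots,\frac{p}{2}-2$, which is precisely the assertion.

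I do not expect any genuine obstacle here; the only point requiring a moment's care is the preliminary reduction guaranteeing that the closed rational form \eqref{alpq} (with no logarithm and no vanishing denominator $2j-p+1$) is legitimate, and this is exactly what the parity observation $n\ne1$ supplies. If one preferred to bypass Proposition \ref{ral}, the same conclusion could be extracted directly from \eqref{cal}: expanding the even rational integrand $\frac{w(\tau)(\epsilon\tau^2-\nu)}{\tau^p}$ and integrating term by term shows that $\alpha(s)$ differs from an even function by a term of order $s^{p-1}$, which again annihilates the odd-order derivatives up to order $p-2$.
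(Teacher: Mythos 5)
Your proposal is correct and rests on exactly the same structural facts as the paper's proof: from \eqref{alpq} one has $\alpha Q=P$ with $Q$ even, $Q(0)=a_0\ne0$, and $C_2s^{p-1}$ the unique odd-degree term of $P$. The paper extracts the conclusion by differentiating the identity $\alpha Q=P$ repeatedly and evaluating at $s=0$, whereas you divide by $Q$ and split $\alpha$ into an even part plus a term of order $s^{p-1}$ — a purely cosmetic difference; your preliminary parity check that $p$ even forces $n\ne1$ (so no logarithm and no vanishing denominator $2j-p+1$) is a welcome extra precaution.
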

\begin{proof}
By \eqref{alpq}, we have
\begin{align}\label{aqp}
\alpha Q=P,
\end{align}
where
\begin{eqnarray*}
&&P=\frac{b_0}{p-1}+\frac{b_1}{p-3}s^2+\cdots+\frac{b_{m/2}}{n-1}s^m+C_2s^{p-1},\\
&&Q=a_0+a_1s^2+\cdots+a_{m/2-1}s^{m-2}
\end{eqnarray*}
with $a_j$ and $b_j$ are given respectively in \eqref{aj} and \eqref{b}. We see that $P$ contains a unique odd-degree term, i.e., $C_2s^{p-1}$, and $Q$ is even in $s$. Notice that $Q(0)=a_0\ne0$. Lemma \ref{odav} thus follows from differentiating \eqref{aqp} $2j+1$ times with respect to $s$, $0\le j\le\frac{p}{2}-2$, and then evaluating them at $s=0$ in turn.
\end{proof}

\begin{lemma}\label{doxi}
$\xi(\sigma)$ is even at $\sigma=0$ up to order $p-2$, i.e., $\frac{d^{2j+1}\xi}{d\sigma^{2j+1}}(0)=0$, $j=0,\cdots,\frac{p}{2}-2$.
\end{lemma}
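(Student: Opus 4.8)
The plan is to propagate the evenness of $\alpha$ (Lemma \ref{odav}) through the defining ODE \eqref{cp} for $\xi$, namely $\sigma\,\xi'=\xi(\kappa_1\sqrt{\alpha}-1)$, where $\alpha$ is evaluated at $s=\sigma\xi$. First I would record the data at $\sigma=0$: since $\zeta(0)=0$ we have $\xi(0)=1$, and since $\kappa_1=1/\sqrt{\alpha(0)}$ the right-hand side of \eqref{cp} vanishes to first order, so $\xi'(0)=0$ already. The key structural observation is that $\alpha$ is a smooth even function of its argument near $0$ (Lemma \ref{odav}, together with the fact that $\alpha(0)>0$ so $\sqrt{\alpha}$ is smooth there), hence $\kappa_1\sqrt{\alpha(u)}-1$ is a smooth even function of $u$ vanishing at $u=0$; write it as $u^2 R(u^2)$ for a smooth function $R$. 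Then \eqref{cp} reads $\sigma\xi' = \xi\cdot(\sigma\xi)^2 R((\sigma\xi)^2) = \sigma^2\,\xi^3 R(\sigma^2\xi^2)$, i.e.
\begin{align*}
\xi' = \sigma\,\xi^3 R(\sigma^2\xi^2).
\end{align*}

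The heart of the argument is to show this ODE preserves evenness. I would argue by the uniqueness/symmetry trick: let $\tilde\xi(\sigma):=\xi(-\sigma)$. Differentiating gives $\tilde\xi'(\sigma) = -\xi'(-\sigma) = -(-\sigma)\xi(-\sigma)^3 R(\sigma^2\xi(-\sigma)^2) = \sigma\,\tilde\xi^3 R(\sigma^2\tilde\xi^2)$, so $\tilde\xi$ solves the same first-order ODE with the same initial value $\tilde\xi(0)=\xi(0)=1$. By the uniqueness theorem for ODEs (the right-hand side is smooth, hence locally Lipschitz, in $(\sigma,\xi)$), $\tilde\xi\equiv\xi$ on a neighborhood of $0$, i.e. $\xi$ is even near $\sigma=0$. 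In particular all odd-order derivatives of $\xi$ vanish at $0$; restricting to the orders relevant here gives $\frac{d^{2j+1}\xi}{d\sigma^{2j+1}}(0)=0$ for $0\le j\le \frac p2-2$, which is the claim.

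An alternative, if one prefers to avoid invoking uniqueness (or wants only the finite-order statement actually needed), is a direct induction: assume $\xi$ is even up to order $2k-1$, i.e. $\xi^{(2\ell+1)}(0)=0$ for $\ell<k$. Then $\sigma\xi(\sigma)$ is odd up to the relevant order, so $(\sigma\xi)^2 R((\sigma\xi)^2)$ is even up to order $2k$, hence $\xi' = \sigma^{-1}\cdot\big[\sigma^2\xi^3 R(\sigma^2\xi^2)\big]$... more cleanly: $\xi'(\sigma)=\sigma\xi^3 R(\sigma^2\xi^2)$ has its low-order Taylor coefficients determined by those of $\xi$, and one checks that $\xi'$ is odd up to order $2k+1$, so $\xi$ is even up to order $2k+1$; the induction closes until we exhaust order $p-2$ (this is why Lemma \ref{odav} is only claimed up to order $p-2$ — beyond that $\alpha$ may fail to be even, as the $C_2 s^{p-1}$ term in $P$ shows). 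I expect the main (very mild) obstacle to be purely bookkeeping: making sure the chain-rule expansion of $R(\sigma^2\xi^2)$ and the factor $\xi^3$ do not import any odd-order contamination below order $p-2$, which the symmetry argument above handles cleanly without any computation. I would therefore present the symmetry argument as the proof.
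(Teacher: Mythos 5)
The symmetry/uniqueness argument that you explicitly choose to present as the proof rests on a false premise. Lemma \ref{odav} does \emph{not} say that $\alpha$ is an even function of $s$ near $0$; it says only that the odd-order derivatives of $\alpha$ vanish at $0$ through order $p-3$. The order-$(p-1)$ derivative does not vanish: differentiating \eqref{aqp} gives $\frac{d^{p-1}\alpha}{ds^{p-1}}(0)=(p-1)!\,C_2/a_0$, which is generically nonzero --- indeed this is exactly the quantity that produces the renormalized volume in \S\ref{rv}. So you cannot write $\kappa_1\sqrt{\alpha(u)}-1=u^2R(u^2)$ for smooth $R$, and the reflection-plus-uniqueness argument, which would conclude that $\xi$ is even to \emph{all} orders near $\sigma=0$, proves something false: the paper later computes $(1-p)\,\xi^{(p-1)}(0)=-\tfrac{\kappa_1^2}{2}\,\alpha^{(p-1)}(0)\ne0$, and $p-1$ is odd. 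A salvage along your lines would have to split $\alpha$ into its even part plus an $O(s^{p-1})$ remainder and then prove that the remainder perturbs $\xi$ only at order $\ge p-1$; that comparison estimate for the singular ODE is an additional step you do not supply.

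Your ``alternative'' finite-order induction is essentially the paper's actual proof, and it is the one that works with only the finite-order information available: differentiate \eqref{cp} exactly $2j+1$ times and evaluate at $\sigma=0$, where $\kappa_1\sqrt{\alpha(0)}=1$ makes the coefficient of $\xi^{(2j+1)}(0)$ equal to $-(2j+1)\ne0$; in each remaining product $\frac{d^i\xi}{d\sigma^i}\cdot\frac{d^{2j+1-i}}{d\sigma^{2j+1-i}}(\kappa_1\sqrt{\alpha})$ one of the two orders is odd, and that factor vanishes at $0$ either by the induction hypothesis on $\xi$ or because $s=\sigma\xi$ is odd to the relevant order and $\sqrt{\alpha}$ is even in $s$ up to order $p-2$ by Lemma \ref{odav} (using $\frac{d}{d\sigma}=\kappa_1\sqrt{\alpha}\,\xi\frac{d}{ds}$). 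You should carry out that bookkeeping --- it is precisely where the hypothesis ``up to order $p-2$'' enters and where the argument must stop --- rather than rely on the global symmetry argument, which as written has a genuine gap.
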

\begin{proof}
Differentiating \eqref{cp} $2j+1$ times with respect to $\sigma$, $0\le j\le\frac{p}{2}-2$, leads to
\begin{align}
\label{dxi}\sigma\frac{d^{2j+2}\xi}{d\sigma^{2j+2}}=\frac{d^{2j+1}\xi}{d\sigma^{2j+1}}(\kappa_1\sqrt{\alpha}-2j-2)
+\sum_{i=0}^{2j}
  C_{2j+1}^i\frac{d^i\xi}{d\sigma^i}\frac{d^{2j+1-i}}{d\sigma^{2j+1-i}}
  (\kappa_1\sqrt{\alpha}).
\end{align}
Notice that $\frac{d}{d\sigma}=\frac{ds}{d\sigma}\frac{d}{ds}=\kappa_1\sqrt{\alpha}\xi\frac{d}{ds}$, $\kappa_1\sqrt{\alpha(0)}=1$, and either $i$ or $2j+1-i$ is odd. Lemma \ref{doxi} thus follows from Lemma \ref{odav} by induction on $j$.
\end{proof}

\begin{proposition}\label{dp}
$\frac{d^{p-1-2j}}{d\sigma^{p-1-2j}}(\xi(\sigma)^{2j-p+1})(0)=0$, $1\le j\le\frac{m}{2}-1$.
\end{proposition}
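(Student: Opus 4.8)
The plan is to read off the parity of the Maclaurin expansion of $\xi^{2j-p+1}$ at $\sigma=0$ directly from Lemma \ref{doxi}. Since $\xi(0)=1$ (cf. \S\ref{gdf}), the function $\xi$ is positive near $\sigma=0$, so $\xi^{2j-p+1}=\exp((2j-p+1)\log\xi)$ is smooth there and its Maclaurin series is well-defined; hence it suffices to show that this series has no term of odd degree $\le p-2$. Note that $p$ is even throughout this subsection, so $p-1-2j$ is odd, and the hypothesis $1\le j\le\frac{m}{2}-1$ forces $n+1\le p-1-2j\le p-3$, i.e., $p-1-2j$ is an odd integer lying strictly between $0$ and $p-2$.

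To carry this out I would first write $\xi(\sigma)=T(\sigma)+R(\sigma)$, where $T$ is the Taylor polynomial of $\xi$ at $0$ of degree $p-2$ and $R(\sigma)=O(\sigma^{p-1})$. By Lemma \ref{doxi} the derivatives of $\xi$ of odd orders $1,3,\dots,p-3$ vanish at $0$, so $T$ involves only even powers of $\sigma$; write $T(\sigma)=S(\sigma^2)$ with $S$ a polynomial satisfying $S(0)=1$. Then near $\sigma=0$,
\begin{align*}
\xi^{2j-p+1}=T^{2j-p+1}\Bigl(1+\tfrac{R}{T}\Bigr)^{2j-p+1}=T^{2j-p+1}+O(\sigma^{p-1}),
\end{align*}
because $R/T=O(\sigma^{p-1})$ and $T(0)=1\ne0$. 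Consequently the Taylor polynomial of $\xi^{2j-p+1}$ of degree $p-2$ agrees with that of $T^{2j-p+1}=S(\sigma^2)^{2j-p+1}$, which is an even function of $\sigma$ and therefore has vanishing derivatives of every odd order. Thus the derivative of order $p-1-2j$ of $\xi^{2j-p+1}$ at $0$ equals the corresponding derivative of the remainder $O(\sigma^{p-1})$, which is $0$ since $p-1-2j<p-1$. This establishes Proposition \ref{dp} (and, fed back into \eqref{av}, it shows that only the $j=0$ term contributes to the constant term, i.e., the renormalized volume).

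There is no genuine obstacle here; the one point needing a little care is that $\xi\mapsto\xi^{2j-p+1}$ is nonlinear with a negative exponent, but this is harmless precisely because $\xi(0)=1$, so any power of $\xi$ is controlled up to order $p-2$ by the even Taylor polynomial $T$. An equivalent route would apply Fa\`a di Bruno's formula to $\psi\mapsto\psi^{2j-p+1}$ composed with $\xi$ and observe that, in a partial derivative of odd order $\le p-2$, one of the factors must be a derivative of $\xi$ of odd order $\le p-3$, which vanishes by Lemma \ref{doxi}; the truncation argument above seems the cleaner presentation.
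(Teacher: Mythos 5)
Your proof is correct and rests on the same key inputs as the paper's, namely Lemma \ref{doxi} together with the parity of $p-1-2j$; the paper packages this by differentiating $\xi^{2j-p+1}$ directly and observing that every resulting term contains an odd-order derivative of $\xi$ of order at most $p-3$ — exactly the Fa\`a di Bruno route you mention at the end — while you truncate the Taylor series and invoke evenness of the truncation. Either presentation is fine.
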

\begin{proof}
We have
\begin{eqnarray}
\notag&&\frac{d^{p-1-2j}}{d\sigma^{p-1-2j}}(\xi^{2j-p+1})\\
\label{der}&=&(2j-p+1)\xi^{2j-p}\frac{d^{p-1-2j}\xi}{d\sigma^{p-1-2j}}
+(\mbox{terms involving }\frac{d^i\xi}{d\sigma^i}\mbox{ with }i<p-1-2j).
\end{eqnarray}
It is easy to see that every term in the parenthesis on the right-hand side contains at least one odd-order derivative of $\xi$ as $p-1-2j$ is odd. Since $p-1-2j\le p-3$ for $1\le j\le\frac{m}{2}-1$, Proposition \ref{dp} thus follows from Lemma \ref{doxi}.
\end{proof}
Proposition \ref{dp} tells us that $\xi(\delta)^{2j-p+1}$ contains no $\delta^{p-1-2j}$ term for $1\le j\le\frac{m}{2}-1$, i.e., $s(\delta)^{2j-p+1}$ does not contribute to the constant term in the asymptotic expansion \eqref{rv} of $\mbox{Vol}_g(\{\sigma>\delta\})$. It remains to check the case $j=0$, for which the derivative \eqref{der} evaluated at $\sigma=0$ gives
\begin{align*}
\frac{d^{p-1}}{d\sigma^{p-1}}(\xi(\sigma)^{1-p})(0)=(1-p)\xi(0)^{-p}\frac{d^{p-1}\xi}{d\sigma^{p-1}}(0)
=(1-p)\frac{d^{p-1}\xi}{d\sigma^{p-1}}(0).
\end{align*}
We now go back to \eqref{dxi}. When $j=\frac{p}{2}-1$, it reads
\begin{align*}
\sigma\frac{d^p\xi}{d\sigma^p}=\frac{d^{p-1}\xi}{d\sigma^{p-1}}(\kappa_1\sqrt{\alpha}-p)+\xi\frac{d^{p-1}}
{d\sigma^{p-1}}(\kappa_1\sqrt{\alpha})+\sum_{i=1}^{p-2}C_{p-1}^i\frac{d^i\xi}{d\sigma^i}\frac{d^{p-1-i}}{d\sigma^{p-1-i}}
(\kappa_1\sqrt{\alpha}).
\end{align*}
With Lemmas \ref{odav} and \ref{doxi} in mind, we evaluate this equation at $\sigma=0$ to get
\begin{eqnarray*}
(1-p)\frac{d^{p-1}\xi}{d\sigma^{p-1}}(0)&=&-\frac{d^{p-1}}{d\sigma^{p-1}}(\kappa_1\sqrt{\alpha})(0)\\
&=&-\frac{\kappa_1^2}{2}\xi(0)(\kappa_1\sqrt{\alpha(0)}\xi(0))^{p-2}\frac{d^{p-1}\alpha}{ds^{p-1}}(0)\\
&=&-\frac{\kappa_1^2}{2}\frac{d^{p-1}\alpha}{ds^{p-1}}(0).
\end{eqnarray*}
Therefore
\begin{align*}
\frac{d^{p-1}}{d\sigma^{p-1}}(\xi(\sigma)^{1-p})(0)=-\frac{\kappa_1^2}{2}\frac{d^{p-1}\alpha}{ds^{p-1}}(0).
\end{align*}

In order to determine the value of $\frac{d^{p-1}\alpha}{ds^{p-1}}(0)$, we turn to \eqref{aqp}. Differentiating both sides $p-1$ times with respect to $s$, and then evaluating the resulting equation at $s=0$ gives
\begin{align*}
\frac{d^{p-1}\alpha}{ds^{p-1}}(0)Q(0)=C_2(p-1)!,
\end{align*}
i.e.,
\begin{align*}
\frac{d^{p-1}\alpha}{ds^{p-1}}(0)=(p-1)!\frac{C_2}{a_0}.
\end{align*}
We thus have
\begin{align*}
\frac{1}{(p-1)!}\frac{d^{p-1}}{d\sigma^{p-1}}(\xi(\sigma)^{1-p})(0)
=-\frac{\kappa_1^2C_2}{2a_0}.
\end{align*}

In summary, the renormalized volume is given by (cf. \eqref{asy})
\begin{eqnarray}
\notag V(g)\notag&=&\frac{C_1C_3}{\kappa_1^p}(\sum_{j=0}^{m/2-1}\frac{a_j}{2j-p+1}s_\ast^{2j-p+1}
  +\frac{\kappa_1^2C_2}{2(1-p)})\\
\notag&=&\frac{C_1C_3}{\kappa_1^p}(\sum_{j=0}^{m/2-1}\frac{a_j}{2j-p+1}s_\ast^{2j-p+1}
  +\frac{1}{2\nu}\sum_{j=0}^{m/2}\frac{b_j}{2j-p+1}s_\ast^{2j-p+1})\\
\notag&=&\frac{C_1C_3}{2\nu\kappa_1^ps_\ast^{p-1}}(\frac{2\nu a_0+b_0}{1-p}+\sum_{j=1}^{m/2-1}\frac{2\nu a_j+b_j}{2j-p+1}s_\ast^{2j}
  +\frac{b_{m/2}}{1-n}s_\ast^m)\\
\label{ff}&=&\frac{C_1C_3}{2\nu\kappa_1^ps_\ast^{p-1}}(\frac{\nu a_0}{1-p}+\sum_{j=1}^{m/2-1}\frac{\epsilon a_{j-1}+\nu a_j}{1-p+2j}s_\ast^{2j}
  +\frac{\epsilon}{1-n}s_\ast^m).
\end{eqnarray}
\begin{theorem}\label{rvv}
The formula \eqref{ff} gives the renormalized volumes associated to the even-dimensional conformally compact Einstein manifolds constructed in Theorems \ref{positive}, \ref{zero} and \ref{np}, where $C_1=\prod_{i=1}^rA_i^{n_i}$, $C_3=2\pi\mbox{Vol}_{g_N}(N)\prod_{i=1}^r\mbox{Vol}_{h_i}(V_i)$, $m=2+\sum_{i=1}^r2n_i$, $p=m+n$, $\kappa_1=\sqrt{\frac{1-p}{\nu}}$, $s_\ast=-\frac{1}{2A_1}$; $a_j$, $0\le j\le\frac{m}{2}-1$, are given by \eqref{aj}; $\nu$ and $A_i$, $1\le i\le r$, are as in Propositions \ref{fes} and \ref{fesnp}.
\end{theorem}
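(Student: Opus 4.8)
The plan is to assemble the analytic inputs already prepared in \S\ref{aav} and \S\ref{rv}. The starting point is the asymptotic volume expansion \eqref{av}, which writes $\mathrm{Vol}_g(\{\sigma>\delta\})$ as a finite sum over $0\le j\le\frac{m}{2}-1$ of terms $\frac{C_1C_3}{\kappa_1^p}\frac{a_j}{2j-p+1}\bigl(s_\ast^{2j-p+1}-\delta^{2j-p+1}\xi(\delta)^{2j-p+1}\bigr)$. Since the renormalized volume $V(g)$ is by definition the constant term of the Maclaurin expansion of this quantity in $\delta$, I must, for each $j$, identify the constant term of $\delta^{2j-p+1}\xi(\delta)^{2j-p+1}$, i.e. the coefficient of $\delta^{p-1-2j}$ in the Taylor series of $\xi(\sigma)^{2j-p+1}$ at $\sigma=0$, and subtract it. For $1\le j\le\frac{m}{2}-1$ this coefficient vanishes by Proposition \ref{dp}, so only the $j=0$ term of the $\delta$-dependent part actually contributes a constant beyond the obvious $s_\ast$-part.

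Next I would evaluate the $j=0$ contribution. Differentiating the ODE \eqref{cp} repeatedly and using Lemma \ref{odav} (odd-order derivatives of $\alpha$ at $s=0$ vanish through order $p-2$) together with Lemma \ref{doxi} (the same for $\xi$), the equation \eqref{dxi} at index $j=\frac{p}{2}-1$, evaluated at $\sigma=0$, collapses to $(1-p)\frac{d^{p-1}\xi}{d\sigma^{p-1}}(0)=-\frac{\kappa_1^2}{2}\frac{d^{p-1}\alpha}{ds^{p-1}}(0)$. Then differentiating the rational identity \eqref{aqp}, $\alpha Q=P$, exactly $p-1$ times at $s=0$ and using $Q(0)=a_0\ne0$ gives $\frac{d^{p-1}\alpha}{ds^{p-1}}(0)=(p-1)!\,C_2/a_0$, whence the coefficient of $\delta^{p-1}$ in $\xi(\delta)^{1-p}$ equals $-\frac{\kappa_1^2C_2}{2a_0}$. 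Feeding this back yields the first displayed line of \eqref{ff}, namely $V(g)=\frac{C_1C_3}{\kappa_1^p}\bigl(\sum_{j=0}^{m/2-1}\frac{a_j}{2j-p+1}s_\ast^{2j-p+1}+\frac{\kappa_1^2C_2}{2(1-p)}\bigr)$.

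The remainder is algebraic. I would substitute $C_2=\sum_{j=0}^{m/2}\frac{b_j}{2j-p+1}s_\ast^{2j-p+1}$ and $\kappa_1^2=\frac{1-p}{\nu}$ to reach the second line of \eqref{ff}, factor out $s_\ast^{1-p}$, and simplify the coefficients $2\nu a_j+b_j$ via the relations \eqref{b}: $b_0=-\nu a_0$ gives $2\nu a_0+b_0=\nu a_0$; $b_j=\epsilon a_{j-1}-\nu a_j$ for $1\le j\le\frac{m}{2}-1$ gives $2\nu a_j+b_j=\nu a_j+\epsilon a_{j-1}$; and at $j=\frac{m}{2}$ only $b_{m/2}=\epsilon$ survives (there is no $a_{m/2}$), with exponent $2\cdot\frac{m}{2}-p+1=1-n$. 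This produces the final line of \eqref{ff}. Finally I would note that the hypotheses of Theorems \ref{positive}, \ref{zero} and \ref{np} are precisely what make $\alpha$, $\beta_i$, $\nu$ and the $A_i$ take the forms recorded in Propositions \ref{fes} and \ref{fesnp}, so \eqref{ff} applies verbatim to every even-dimensional example constructed there. I do not anticipate a genuine obstacle: all the hard analytic facts (Proposition \ref{dp}, Lemmas \ref{odav} and \ref{doxi}, Proposition \ref{alc}) are already established, and the only care needed is the bookkeeping separating the constant contributions coming from $s_\ast$ from those coming from the $\delta$-expansion, and the correct treatment of the top index $j=\frac{m}{2}$ where the $a_j$-sum terminates but the $b_j$-sum does not.
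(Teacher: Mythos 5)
Your proposal follows the paper's own proof essentially step for step: the same reduction via Proposition \ref{dp} to the $j=0$ term, the same evaluation of $\frac{d^{p-1}\xi}{d\sigma^{p-1}}(0)$ from \eqref{dxi} and of $\frac{d^{p-1}\alpha}{ds^{p-1}}(0)$ from \eqref{aqp}, and the same algebraic simplification using \eqref{b} and $\kappa_1^2=\frac{1-p}{\nu}$. The argument is correct and no different in substance from the one in \S\ref{rv}.
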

\begin{remark}
In Example \ref{hyp}, we recovered $4$-dimensional hyperbolic space $H^{4}$ whenever $\epsilon=-\frac{3}{\nu}$ and $A_1=\frac{3}{2\nu}$. Now we shall use \eqref{ff} to compute its renormalized volume. After a tedious but straightforward computation, we end up with $V(H^4)=\frac{4}{3}\pi^2$ (cf. Proposition \ref{4h}).
\end{remark}

\bibliographystyle{amsplain}
\bibliography{DCbib}

\end{document}